\newtheorem{theorem}{Theorem}[section]
\newtheorem{lemma}[theorem]{Lemma}
\newtheorem{corollary}[theorem]{Corollary}
\newtheorem{proposition}[theorem]{Proposition}
\newtheorem{claim}{Claim}
\theoremstyle{definition}
\newtheorem{example}[theorem]{Example}
\newtheorem{question}{Question}
\theoremstyle{remark}
\newtheorem{remark}[theorem]{Remark}
\numberwithin{equation}{section}
\newcommand{\N}{\mathbb N}
\newcommand{\R}{\mathbb R}
\newcommand{\Z}{\mathbb Z}
\newcommand{\C}{\mathbb C}
\newcommand{\D}{\mathbb D}
\newcommand{\bigo}{\mathcal{O}}
\title[Hypercyclic algebras for convolution operators]{Hypercyclic algebras for convolution operators of unimodular constant term}
\author[J.\ B\`{e}s, R.\ Ernst, and A.\ Prieto]{J.\ B\`{e}s, R.\ Ernst, and A.\ Prieto}
\address{J.~B\`{e}s, Department of Mathematics and Statistics,
Bowling Green State University,
Bowling Green, Ohio 43403,
USA.}
\email{jbes@bgsu.edu}
\address{R.~Ernst, LMPA, Centre Universitaire de la Mi-Voix, Maison de la Recherche Blaise-Pascal, 50 Rue Ferdinand Buisson, BP 699, 62228 Calais Cedex, France.
 }
\email{romuald.ernst@math.cnrs.fr}
\address{A.~Prieto, Departamento de An\'{a}lisis Matem\'{a}tico y Matem\'atica Aplicada,
Universidad Complutense de Madrid, Plaza de Ciencias 3, 28040
Madrid,
Spain}
\email{angelin@mat.ucm.es}
\thanks{This work is supported in part by MEC, Project
MTM 2016-7963-P, Programme PEPS JC 2017 INSMI, and by MEC Grant MTM 2015-65825-P}
\date{May 8th, 2019}
\subjclass[2010]{Primary 47A16, 46E10}
\keywords{Hypercyclic algebras; convolution operators, algebrability}
\begin{document}
\begin{abstract}
We study  the existence of hypercyclic algebras for convolution operators $\Phi(D)$ on the space of entire functions whose symbol $\Phi$ has unimodular constant term. In particular, we provide new eigenvalue criteria for the existence of densely strongly algebrable sets of hypercyclic vectors.
\end{abstract}
\maketitle
\section{Introduction}
Once a linear dynamical system $(X, T)$ supports a hypercyclic vector, that is, a vector $x$ whose orbit $\{ T^nx\}_{n=0}^\infty$ is dense, the space $X$ always contains a dense linear subspace consisting entirely (but zero) of hypercyclic vectors~\cite{wengenroth2003hypercyclic}. When we further assume $X$ to be a topological algebra, it is natural to ask whether $T$ can support a {\em hypercyclic algebra,} that is, a subalgebra  of $X$ consisting entirely (but zero) of hypercyclic vectors. 

Particular attention has been given to this question for the case where $X=H(\mathbb{C})$ is the algebra of entire functions on the complex plane, endowed with the compact-open topology, and where $T$ is a convolution operator, that is, an operator that commutes with all translations. Godefroy and Shapiro~\cite{godefroy_shapiro1991operators} showed that such operators are precisely the ones that commute with the operator $D$ of complex differentiation, that they coincide with the operators of the form $T=\Phi(D)$ with $\Phi\in H(\mathbb{C})$ of exponential type, and that a convolution operator supports hypercyclic vectors precisely when it is not a scalar multiple of the identity.

Aron et al~\cite{aron_conejero_peris_seoane-sepulveda2007powers} first noted that any translation $\tau_a$, which is the operator $\Phi(D)$ with $\Phi(z)=e^{az}$, fails to support a hypercyclic algebra in a dramatic way: given any $f\in H(\mathbb{C})$, the multiplicity of any zero of an element in the orbit of $f^p$  must be divisible by $p$.  They also stopped short from giving a positive answer for $D$, showing that for the generic element $f$ of $H(\mathbb{C})$, each power $f^n$ $(n\in\N)$ is a hypercyclic vector for $D$. Shkarin~\cite{shkarin2010on}, and independently Bayart and Matheron~\cite{bayart_matheron2009dynamics} finally showed that $D$ supports a hypercyclic algebra, prompting the question:
\begin{question} \label{Q:1}
{\em Which convolution operators 
support a hypercyclic algebra? In other words, for which $\Phi\in H(\mathbb{C})$ of exponential type does $\Phi(D)$ support a hypercyclic algebra?}
\end{question}
Several new positive examples were obtained since then by various authors~\cite{bes_conejero_papathanasiou2016convolution,BCP2018,bernal-gonzalez_calderon-moreno}, including for instance when $\Phi$ is of subexponential growth having zero constant term, or of growth order one such as $\Phi(z)=\mbox{cos}(z)$ or $\Phi(z)=ze^z$, say.
Recently Bayart~\cite{bayart} provided a complete characterization for the case when $\Phi(0)$ has modulus strictly smaller than one, and provided a large class of examples when $\Phi$ is of subexponential growth and has unimodular constant term:

\begin{theorem} {\bf (Bayart)} \label{T:1.1} \cite[Theorem 1.1]{bayart}
Let $\Phi$ be a non-constant entire function of exponential type.
\begin{enumerate}
\item[{\rm 1.}] \ Assume that $|\Phi (0)|<1$. Then the following are equivalent:
\begin{enumerate}
\item[{\rm (i)}]\ $\Phi(D)$ supports a hypercyclic algebra.
\item[{\rm (ii)}]\ $\Phi$ is not a scalar multiple of an exponential function.
\end{enumerate}
\item[{\rm 2.}] \ Assume that $|\Phi (0)|=1$ and that $\Phi$ has subexponential growth. If either $\Phi'(0)\ne 0$ or $\Phi$ has order less than $1/2$, then $\Phi(D)$ supports a hypercyclic algebra.
\end{enumerate}
\end{theorem}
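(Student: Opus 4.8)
\medskip
\noindent\textbf{A proof proposal.} The statement splits into the implication $(\mathrm i)\Rightarrow(\mathrm{ii})$ of item~1, which is easy, and the sufficiency assertions (the converse in item~1, and all of item~2), which share a common strategy. For $(\mathrm i)\Rightarrow(\mathrm{ii})$ I would argue by contraposition: if $\Phi=c\,e^{az}$ then $\Phi(D)=c\,\tau_a$, so for every $f\in H(\C)$ and every $n$,
\[
\Phi(D)^n(f^2)=c^n\big(f(\,\cdot+na)\big)^2=\big(c^{n/2}f(\,\cdot+na)\big)^2
\]
is a square. Since $\{g^2:g\in H(\C)\}$ is a closed subset of $H(\C)$ — by Hurwitz's theorem the limit of squares has all zeros of even order, hence (as $\C$ is simply connected) admits an entire square root — and is proper (for instance $z$ is not a square), no orbit of a square can be dense. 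Thus for any nonzero $f$ in a putative hypercyclic algebra the vector $f^2$ would be a non‑hypercyclic element of that algebra, a contradiction. This is the observation of Aron et al.\ \cite{aron_conejero_peris_seoane-sepulveda2007powers}.

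For the sufficiency parts I would invoke the now‑standard transitivity criterion for dense strong algebrability of operators on a separable commutative $F$‑algebra (see \cite{bes_conejero_papathanasiou2016convolution,BCP2018}): it suffices, roughly, that for every $m\in\N$, every pair of nonempty open sets $U,V\subseteq H(\C)$, every compact $K$ and every $\varepsilon>0$, one can produce $f$ close to a prescribed element of $U$ and an exponent $n$ with $\|\Phi(D)^n(f^m)-v\|_K<\varepsilon$ for a prescribed $v\in V$ while $\|\Phi(D)^n(f^j)\|_K<\varepsilon$ for $1\le j\le m-1$, $f$ being taken transcendental so that the generated algebra is free and dense. Running this inside a Baire/gliding‑hump scheme builds the generator as a convergent series $f=\sum_k w_k$, each $w_k$ a finite sum of exponentials $e_\mu(z)=e^{\mu z}$, each block settling one datum of a countable list. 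The engine is the Godefroy--Shapiro eigenstructure $\Phi(D)e_\mu=\Phi(\mu)e_\mu$, $e_\mu e_\nu=e_{\mu+\nu}$: for $w=\sum_i c_ie_{\mu_i}$,
\[
\Phi(D)^n\big(w^j\big)=\sum_{i_1,\dots,i_j}c_{i_1}\cdots c_{i_j}\,\Phi(\mu_{i_1}+\dots+\mu_{i_j})^n\,e_{\mu_{i_1}+\dots+\mu_{i_j}}.
\]
Everything then reduces to a \emph{frequency lemma}: for every $m$ there is a frequency $\mu_0$ with
\[
|\Phi(\mu_0)|<1,\ \dots,\ |\Phi((m-1)\mu_0)|<1 \quad\text{but}\quad |\Phi(m\mu_0)|>1,
\]
valid on a whole neighbourhood of $\mu_0$ (so that the surviving frequencies $m\mu$ fill a set with an accumulation point, whose exponentials span a dense subspace and thus approximate any target $v$). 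Given such $\mu_0$, choosing the coefficients $c_i$ exponentially small and $n$ correspondingly large forces the top term $c_i^m\Phi(m\mu_0)^n e_{m\mu_0}$ to realise $v$ while all lower powers of $w$ are crushed.

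It remains to establish the frequency lemma. When $|\Phi(0)|<1$ it is immediate: near $0$ every $|\Phi(j\mu)|$ is close to $|\Phi(0)|<1$, while $\{|\Phi|>1\}$ is nonempty since $\Phi$ is not a scalar exponential, and a short continuity/scaling argument produces a suitable $\mu_0$. The delicate case is $|\Phi(0)|=1$. If $\Phi'(0)\ne0$, write near $0$, using $|\Phi(0)|=1$,
\[
\log\frac{\Phi(z)}{\Phi(0)}=\alpha_1 z+\alpha_{k_0}z^{k_0}+\bigo(z^{k_0+1}),
\]
where $\alpha_1=\Phi'(0)/\Phi(0)\ne0$ and, since $\Phi$ has subexponential growth and is therefore \emph{not} a scalar multiple of an exponential, some $\alpha_{k_0}$ with $k_0\ge2$ is nonzero; then for $\mu_0=te^{i\phi}$ one gets $\log|\Phi(j\mu_0)|=j\,\mathrm{Re}(\alpha_1\mu_0)+j^{k_0}\,\mathrm{Re}(\alpha_{k_0}\mu_0^{k_0})+\bigo(t^{k_0+1})$, a convex profile in $j$. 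Choosing $\phi$ with $\mathrm{Re}(\alpha_1 e^{i\phi})<0$ and $\mathrm{Re}(\alpha_{k_0}e^{ik_0\phi})>0$ — two cosine‑positivity conditions, defining an arc of length $\pi$ and a union of $k_0\ge2$ arcs of total length $\pi$, which necessarily overlap in an arc of positive length — and then tuning $t$ so that the crossover of this profile falls strictly between $m-1$ and $m$ gives exactly the displayed inequalities. If instead $\Phi$ has order $<1/2$, I would replace the local analysis by the $\cos\pi\rho$ (Wiman) minimum‑modulus theorem, which forces $\min_{|z|=r}|\Phi(z)|\to\infty$ along a sequence $r_k\to\infty$; combining this with the fact that $\Phi$ — non‑constant, and not a scalar exponential, hence possessing a zero — also takes values of modulus $<1$, one again locates frequencies realising the pattern, now of large rather than small modulus.

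The one genuine obstacle is the hypothesis $|\Phi(0)|=1$. When $|\Phi(0)|<1$ the operator $\Phi(D)$ is asymptotically contractive on the relevant dense subspace — $\Phi(D)^n$ wipes out polynomials and exponential sums built from frequencies where $|\Phi|<1$ — so the contributions of all already‑constructed data decay automatically and the Baire scheme runs unimpeded (this is Bayart's item~1). When $|\Phi(0)|=1$ no such decay is free: $\Phi(D)^n$ merely rotates fixed vectors. One must then \emph{engineer} the decay — building $f$ only from frequencies where $|\Phi|<1$ so that the low powers of each block do not explode, and inserting at each stage a tiny cancellation perturbation to kill the finitely many residual exponentials of modulus $>1$ that earlier blocks feed into the current power. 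Reconciling these competing demands (small coefficients versus large exponents, vanishing low powers versus an exploding top power, non‑interference across blocks, and the very existence of the frequency lemma, which is precisely what forces an extra assumption on $\Phi$) is the technical heart of the argument, and is exactly what the new eigenvalue criteria of this paper are meant to package cleanly.
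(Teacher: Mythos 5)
The theorem you are asked to prove is not actually proved in this paper: it is quoted verbatim from Bayart \cite[Theorem 1.1]{bayart} as background. The paper does, however, prove a strictly stronger version of item~2 (Theorem~\ref{T:Ts}(a), via Corollary~\ref{C:7.3}, which drops the hypothesis ``$\Phi'(0)\ne 0$ or order $<1/2$'') and reproduces Bayart's criterion as Theorem~\ref{T:bayart}, so your proposal can be measured against those.

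Your $(\mathrm i)\Rightarrow(\mathrm{ii})$ argument (squares form a closed proper subset of $H(\C)$, so the orbit of $f^2$ under a scalar translation cannot be dense) is correct and is the Aron et al.\ observation; no issues there. Your overall strategy for the sufficiency directions --- the Godefroy--Shapiro eigenstructure $\Phi(D)e_\mu=\Phi(\mu)e_\mu$, $e_\mu e_\nu=e_{\mu+\nu}$, plus a transitivity criterion of Lemma~\ref{L:0} type, plus a local Taylor expansion of $\log\Phi$ near $0$ to produce convexity --- is also the right strategy and is essentially what Bayart, and this paper's Theorems~\ref{T:2} and~\ref{T:ma^{m-1}}, carry out. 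Your generalisation to $k_0\ge 2$ (rather than requiring the quadratic coefficient of $\log\Phi$ to be nonzero) correctly anticipates that the paper drops Bayart's side condition in item~2.

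The genuine gap is the ``frequency lemma'' you formulate: ``for every $m$ there is $\mu_0$ with $|\Phi(j\mu_0)|<1$ for $1\le j\le m-1$ but $|\Phi(m\mu_0)|>1$.'' This is not what the construction actually needs, and by itself it does not make the argument close. The vector $f$ in these constructions is a perturbed sum $A+R$ of \emph{two} blocks of exponentials --- one whose frequencies lie in a region where $|\Phi|>1$ (to produce the target $v$ after applying $\Phi(D)^n$), and one whose frequencies lie in a segment $(0,z_0]$ where $|\Phi|<1$ (to anchor $f$ in $U$). Expanding $(A+R)^j$ produces mixed frequencies $\alpha\cdot u+\lambda\cdot v$, and the entire difficulty --- which your lemma does not touch --- is to force \emph{all} of these mixed terms to decay under $\Phi(D)^n$. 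This is exactly what Bayart's criterion (Theorem~\ref{T:bayart}) packages: one needs a base point $w_0$ with $|\Phi(w_0)|>1$, a segment $(0,z_0]$ on which $|\Phi|<1$, the \emph{sub-multiplicativity} condition $|\Phi(dw_0)|<|\Phi(w_0)|^d$ for $d=2,\dots,m$, and local strict monotonicity of $t\mapsto|\Phi(w_0+tz_0)|$; the paper's Theorem~\ref{T:2} trades this for strict convexity of $\log|\Phi|$ along a ray from the origin. A ``jump at $m$'' along a single arithmetic progression neither controls the cross-frequencies nor yields the quotient bounds $\theta_{u,v}<1$ that are required. You do gesture at this problem in your closing paragraph, but the lemma as stated would have to be replaced by the stronger geometric conditions, and the ``crushing'' of cross-terms has to be argued case-by-case, as in the proofs of Theorem~\ref{T:bayart} and Theorem~\ref{T:2}.
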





The purpose of this paper is to continue the study of hypercyclic algebras for convolution operators  $\Phi(D)$ with unimodular constant term $\Phi(0)$.  Notice that  it suffices to understand the case when $\Phi(0)=1$, as by the Le\'on-M\"{u}ller Theorem~\cite{LeoMul06} any operator $T$ has the same hypercyclic vectors as a scalar multiple $\lambda T$ whenever the scalar $\lambda$ has modulus one. We can say the following (see Subsection~\ref{Ss:Notation}):

\begin{theorem} \label{T:Ts}
Let $\Phi$ be a non-constant entire function of exponential type, with $\Phi(0)=1$.
\begin{enumerate}
\item[{\rm (a)}]\ If $\Phi$ is of subexponential growth, then $\Phi(D)$ has a hypercyclic algebra.

\item[{\rm (b)}]\. Suppose $\Phi$ is not of subexponential growth.
\begin{enumerate}
\item[1.]\  If $\Phi$ has no zeroes, then $\Phi(D)$ has no hypercyclic algebra.
\item[2.]\  If $\Phi$ is of the form $
\Phi(z)=e^{az} p(z)$
for some non-constant polynomial \[ p(z)=1+a_1z+a_2z^2+\dots +a_rz^r\] and some non-zero scalar $a$ with $a_1a^{-1}\in \C\setminus \R$ or with $2a_2\ne a_1^2$, then $\Phi(D)$ supports a hypercyclic algebra.
\item[3.]\ If $\Phi$ has infinitely many zeros and 
\[
\Phi(z) = e^{az} \prod_{n=1}^\infty E_p\left( \frac{z}{z_n} \right)
\]
denotes its Hadamard factorization, we have the following whenever $\sum_{n=1}^\infty  z_n^{-2}\ne 0$:
\begin{enumerate}
\item[{\rm (i)}]\ If $\sum_{n=1}^\infty |z_n|^{-1}$ converges, then $\Phi(D)$ has a hypercyclic algebra.

\item[{\rm (ii)}]\ If $\sum_{n=1}^\infty |z_n|^{-1}$ diverges and the symbol $a$ is non-zero then $\Phi(D)$ has a hypercyclic algebra.

\end{enumerate}

\end{enumerate}

\end{enumerate}

\end{theorem}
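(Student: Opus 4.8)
The statement combines one negative result, part (b)1, with several positive ones, and I would treat them separately; the reduction to $\Phi(0)=1$ is already provided by the Le\'{o}n--M\"{u}ller theorem. Part (b)1 is immediate: a zero-free, non-constant entire function of exponential type has, by Hadamard's factorization, the form $\Phi(z)=e^{az}$ with $a\neq0$, so $\Phi(D)=\tau_a$ is a translation and supports no hypercyclic algebra by the argument of Aron et al.\ --- any zero of any element in the orbit of $f^{p}$ has multiplicity divisible by $p$. All the work lies in the positive statements (a), (b)2 and (b)3.

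For those, the plan is to isolate a single \emph{eigenvalue criterion} --- the ``new criterion'' of the abstract --- and then verify its hypotheses case by case. The engine is the usual transitivity/Baire scheme for dense strong algebrability of convolution operators: writing $e_\lambda(z)=e^{\lambda z}$, these are eigenvectors of $\Phi(D)$ with eigenvalue $\Phi(\lambda)$, and $\Phi(0)=1$ fixes the constants, so the hypercyclic algebra must be non-unital. Everything reduces to the following approximation task: given $m$, a function $f$ to approximate, an orbit-target $g$ and $\varepsilon>0$, produce $h=f_0+u$ with $f_0$ a sum of exponentials $e_{\mu_k}$ approximating $f$ (the $\mu_k$ taken deep inside $\{|\Phi|<1\}$) and $u=u_n\to0$ a perturbation built from exponentials $e_{\lambda_l}$ with $\lambda_l\in\{|\Phi|>1\}$ and coefficients $c_l(n)$ of order $|\Phi(m\lambda_l)|^{-n/m}$, chosen so that $\Phi(D)^{n}$ annihilates $f_0^{j}$, the mixed terms $f_0^{m-i}u^{i}$ $(1\le i\le m-1)$, and the lower-power and off-diagonal parts produced by $u$, while the diagonal part $\sum_l c_l(n)^{m}\Phi(m\lambda_l)^{n}e_{m\lambda_l}$ of $\Phi(D)^{n}(u^{m})$ converges to an exponential approximant of $g$. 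The one genuinely nontrivial requirement, once the bookkeeping is set up, is that the off-diagonal and lower-power contributions vanish, which amounts to a strict inequality $L\big(\sum_i\lambda_{l_i}\big)<\tfrac1m\sum_iL(m\lambda_{l_i})$ for $L:=\log|\Phi|$ over non-constant tuples --- and this fails exactly when $L$ is affine, i.e.\ for translations, which is the conceptual source of (b)1.

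The heart of the matter is arranging that inequality near the fixed point $0$, using $\log\Phi(z)=\alpha_1z+\alpha_2z^{2}+\bigo(z^{3})$ with $\alpha_1=\Phi'(0)$ and $2\alpha_2=\Phi''(0)-\Phi'(0)^{2}$. If $\alpha_2\neq0$, take all frequencies $\lambda_l=t_le^{i\beta}$ on a short ray from $0$ with $2\beta\equiv-\arg\alpha_2$ chosen so that also $\operatorname{Re}(\alpha_1e^{i\beta})\ge0$; then $m\sum_i\lambda_{l_i}^{2}-(\sum_i\lambda_{l_i})^{2}=\sum_{i<j}(\lambda_{l_i}-\lambda_{l_j})^{2}$ is a positive multiple of $e^{2i\beta}$ on non-constant tuples, so $\operatorname{Re}(\alpha_2\cdot[\,\cdots])>0$, while $L(\lambda_l)>0$ for small $t_l$, and the $\bigo(z^{3})$ errors are absorbed by shrinking the $t_l$. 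Reading $\alpha_2$ off the Hadamard factorizations gives exactly the stated conditions: $\alpha_2=a_2-\tfrac12a_1^{2}$ when $\Phi=e^{az}p$ (so ``$2a_2\neq a_1^{2}$'' in (b)2) and $\alpha_2=-\tfrac12\sum_nz_n^{-2}$ in the infinite-product cases (so ``$\sum z_n^{-2}\neq0$'' in (b)3). The alternative $a_1a^{-1}\notin\R$ in (b)2 is a first-order version of the same device, available when $\alpha_2=0$ provided $\alpha_1$ is not real-aligned with the translation vector $a$ --- the feature that defeats the translation obstruction. For part (a) no quadratic term need be present: one uses the first non-vanishing Taylor coefficient $\alpha_k$ of $\log\Phi$ at $0$ (any $k\ge1$), on a ray with $k\beta\equiv-\arg\alpha_k$, where the power-mean inequality $m^{k-1}\sum t_{l_i}^{k}\ge(\sum t_{l_i})^{k}$ plays the role above, subexponential growth being what keeps the global and higher-order errors under control. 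Finally, in (b)3(ii) the hypothesis $a\neq0$ is needed because, when the canonical product is only conditionally convergent $(\sum|z_n|^{-1}=\infty)$, the factor $e^{az}$ must organize the global behaviour of $|\Phi|$ and supply the unbounded half-plane-like pieces of $\{|\Phi|<1\}$ and $\{|\Phi|>1\}$ used in the mixed-term estimates; in (b)3(i) the absolutely convergent product already suffices, and $a=0$ there is anyway incompatible with $\Phi$ not being of subexponential growth.

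The main obstacle I anticipate is exactly the uniform-in-$n$ bookkeeping folded into the second paragraph: one must select the finitely many $\mu_k,\lambda_l$ and the coefficients $c_l(n)$ so that the diagonal part of $\Phi(D)^{n}(u^m)$ hits $g$ while \emph{all} of $f_0^{m-i}u^{i}$, \emph{all} off-diagonal pieces of $u^m$, and \emph{all} lower powers are simultaneously annihilated by $\Phi(D)^{n}$ --- and, in the infinite-product cases, one must control $|\Phi|$ at the potentially large frequencies coming from $f_0$, where only the Hadamard product and the crude exponential-type bound are available rather than the clean local model at $0$. The transitivity criterion itself, the Le\'{o}n--M\"{u}ller reduction, and the zero-free case are routine by comparison.
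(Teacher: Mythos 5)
Your overall framework---the Baire/transitivity scheme of Lemma~\ref{L:0}, eigenvector approximation by exponentials, a perturbation $R_N=\sum_j c_j e^{\lambda_j z}$ with $c_j$ chosen so that the diagonal of $\Phi(D)^N((A+R_N)^m)$ hits the target, and a strict log-convexity inequality $\log|\Phi(\sum_i\lambda_{l_i})|<\tfrac1m\sum_i\log|\Phi(m\lambda_{l_i})|$ killing the off-diagonal terms---is indeed the engine behind the paper's Theorem~\ref{T:2}, and your identification $2\alpha_2=\Phi''(0)-\Phi'(0)^2$ with $\alpha_2=a_2-\tfrac12a_1^2$ (resp.\ $\alpha_2=-\tfrac12\sum z_n^{-2}$) is exactly how the paper reads the hypotheses of (b)2 and (b)3 off the Hadamard factorization (Lemma~\ref{L:1}, Corollary~\ref{C:7.4}). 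But there is a genuine gap in your treatment of part~(a). You propose to use the \emph{first} non-vanishing Taylor coefficient $\alpha_k$ of $\log\Phi$ at $0$ and the power-mean inequality $m^{k-1}\sum t_{l_i}^k\ge\bigl(\sum t_{l_i}\bigr)^k$ ``for any $k\ge1$''. For $k=1$ this is an equality, so the required strict off-diagonal inequality vanishes. Part~(a) certainly contains cases with $\alpha_1=\Phi'(0)\ne0$ and $\alpha_2=\tfrac12\bigl(\Phi''(0)-\Phi'(0)^2\bigr)=0$ (e.g.\ the polynomial $\Phi(z)=1+z+z^2/2$), where neither Lemma~\ref{L:1} nor your power-mean argument gives strict convexity on a segment through $0$. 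This is precisely why the paper does \emph{not} route part~(a) through the convexity criterion Theorem~\ref{T:2} at all: Corollary~\ref{C:7.3} instead invokes Theorem~\ref{T:ma^{m-1}}, which rests on Bayart's Theorem~\ref{T:bayart}. That criterion replaces local strict convexity by the pointwise condition $|\Phi(dw_0)|<|\Phi(w_0)|^d$ together with a direction of initial decay, and the proof of Theorem~\ref{T:ma^{m-1}} is a contradiction argument showing that failure of that condition along a ray would force $\Phi$ to exceed its indicator growth on that ray. You would need either that argument or a substantial repair (working with the first $\alpha_k$, $k\ge2$, while carrying the $\alpha_1$-term as a drift and controlling signs as in an extended Lemma~\ref{L:theta}) before part~(a) closes.

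Your explanation of the hypothesis $a\ne0$ in part~(b)3(ii) also misidentifies the mechanism. You say the factor $e^{az}$ is needed to ``organize the global behaviour of $|\Phi|$'' and supply half-plane-like regions; in the paper the role of $a\ne0$ is purely local at $0$. When $\sum|z_n|^{-1}=\infty$ the canonical product has genus $1$, so $P'(0)=0$ and $\Phi'(0)=a$. Theorem~\ref{T:2} is not applicable because the arithmetic-progression hypothesis $\{ja\}_{j=1}^m\subset\Phi^{-1}(\mathbb D)$ need not hold for a genus-one product (unlike in case (b)3(i), where subexponential growth of $P$ forces $e^xP(x)\to0$ as $x\to-\infty$). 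Instead the paper uses the strengthened criterion Theorem~\ref{T:2bisbis} (via Corollary~\ref{C:Phi'(0)}), whose extra hypothesis $\Phi'(0)\ne0$---supplied exactly by $a\ne0$---feeds Lemma~\ref{L:1}(b) to give strict convexity of $\log|\Phi|$ on a \emph{two-sided} segment $[-w_1,w_1]$ through $0$. One then places the approximating frequencies on the decreasing side ($|\Phi|<1$) and the perturbation frequencies on the increasing side ($|\Phi|>1$), which dispenses with the arithmetic-progression requirement altogether. So the remaining cases of your proposal are essentially correct in spirit, but both the proof of part~(a) and the justification of $a\ne0$ in (b)3(ii) need to be replaced by the paper's actual mechanisms.
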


The proof of Theorem~\ref{T:Ts} is based on Theorem~\ref{T:2} and Theorem~\ref{T:ma^{m-1}} below.
\begin{theorem} \label{T:2}
Let $\Phi$ be entire of exponential type with $|\Phi (0)|=1$ and 
\[
\Phi''(0)\Phi(0)\ne \Phi'(0)^2.
\]
Suppose that $\Phi^{-1}(\mathbb{D)}$ contains arbitrarily long arithmetic progressions of the form
$\{ ja \}_{j=1}^m$, where $a\ne 0$ and $m\in \N$.
Then $\Phi(D)$ supports a hypercyclic algebra.
\end{theorem}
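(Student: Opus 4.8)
The plan is to reduce Theorem~\ref{T:2} to a transitivity-type criterion for algebrability, and then to meet the criterion by a single exponential perturbation whose frequencies are clustered near $0$, where I can exploit that $\Phi(0)=1$ together with the non-degeneracy $\Phi''(0)\Phi(0)\neq\Phi'(0)^2$.

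\emph{Reduction.} By the now-standard criterion for the existence of (dense, strongly) hypercyclic algebras for convolution operators, in the spirit of \cite{bes_conejero_papathanasiou2016convolution,bayart}, it suffices to show: for every $m\in\N$, every pair of nonempty open sets $U,V\subseteq H(\C)$, every $\rho>0$ and every $\varepsilon>0$, there exist $f\in U$ and $n\in\N$ with
\[
\sup_{|z|\le\rho}\bigl|\Phi(D)^n(f^{\,j})(z)\bigr|<\varepsilon \quad (1\le j\le m-1), \qquad \Phi(D)^n(f^{\,m})\in V .
\]
I will use throughout that $\Phi(D)e_\lambda=\Phi(\lambda)e_\lambda$ for $e_\lambda(z)=e^{\lambda z}$ and that finite combinations of such exponentials are dense in $H(\C)$.

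\emph{Set-up.} Fix $m$. Since $\Phi^{-1}(\D)$ is open and contains $a,2a,\dots,ma$, I may fix $r>0$ and $\eta_0>0$ so small that every product $g^{\,j-\ell}u^{\ell}$ with $j-\ell\ge1$ and $j\le m$ will be supported on frequencies in $\Phi^{-1}(\D)$ as soon as the modes of $g$ lie in $B(a,r)$ and those of $u$ in $B(0,\eta_0)$ — these frequencies cluster near the nonzero multiples $a,2a,\dots,ma$. Pick $g\in\operatorname{span}\{e_\lambda:\lambda\in B(a,r)\}$ approximating a prescribed point of $U$; then $g,g^2,\dots,g^m$ are supported on frequencies in $\Phi^{-1}(\D)$, so $\Phi(D)^n(g^{\,j})\to0$ geometrically for $j\le m$. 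Next, because
\[
(\log\Phi)''(0)=\frac{\Phi''(0)\Phi(0)-\Phi'(0)^2}{\Phi(0)^2}\neq0,
\]
I may fix a unimodular $\theta$ with $\kappa:=\operatorname{Re}\bigl((\log\Phi)''(0)\theta^2\bigr)>0$; since $\log|\Phi(t\theta)|=\operatorname{Re}(\Phi'(0)\theta)\,t+\tfrac{\kappa}{2}\,t^2+O(t^3)$ near $t=0$, after replacing $\theta$ by $-\theta$ if necessary and shrinking $\eta_0$ I obtain $|\Phi(t\theta)|>1$ for all $t\in(0,\eta_0]$.

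\emph{Construction.} Given $V$ and $\rho$, approximate a prescribed point $h$ of $V$ uniformly on $\{|z|\le\rho\}$ by a finite sum $\sum_{s=1}^{S}\beta_s e_{w_s}$ with $w_s=t_s\theta$, the $t_s\in(0,\eta_0)$ pairwise distinct (possible because the $e_{t\theta}$, $0<t<\eta_0$, span a dense subspace of $H(\C)$). Put $\mu_s:=w_s/m$, and for an integer parameter $n$ let
\[
d_s:=\bigl(\beta_s\,\Phi(w_s)^{-n}\bigr)^{1/m},\qquad u:=\sum_{s=1}^{S}d_s e_{\mu_s},\qquad f:=g+u .
\]
As $|d_s|=|\beta_s|^{1/m}|\Phi(w_s)|^{-n/m}\to0$, we get $f\in U$ once $n$ is large. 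Expanding $f^{\,j}=\sum_{\ell=0}^{j}\binom j\ell g^{\,j-\ell}u^{\ell}$ and $u^{\ell}=\sum_{|\alpha|=\ell}\binom{\ell}{\alpha}d^{\alpha}e_{\langle\alpha,\mu\rangle}$ and applying $\Phi(D)^n$, each monomial yields a term of size $\le C\exp\bigl(n\bigl[\log|\Phi(\zeta)|-\tfrac1m\sum_{s}\alpha_s\log|\Phi(w_s)|\bigr]\bigr)$, where $\zeta$ is its frequency and $\alpha$ records the $u$-modes used. A monomial involving a factor from $g$ has $\zeta\in\Phi^{-1}(\D)$ and $|\Phi(w_s)|>1$, so the bracket is $<0$ and the term tends to $0$. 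For a pure-power monomial of $u^{\ell}$ one has $\zeta=\tfrac1m\sum_s\alpha_s w_s$ near $0$; the linear term of $\log|\Phi(t\theta)|$ cancels in the bracket, leaving, up to $O(t^3)$,
\[
\frac{\kappa}{2m}\Bigl[\tfrac1m\bigl(\textstyle\sum_s\alpha_s t_s\bigr)^2-\textstyle\sum_s\alpha_s t_s^{\,2}\Bigr],
\]
which by Cauchy--Schwarz is strictly negative unless $|\alpha|=m$ with $\alpha$ concentrated at a single index; in that diagonal case $\alpha=m\,e_s$ the bracket is $0$ and the term is the constant $d_s^{\,m}\Phi(w_s)^n e_{w_s}=\beta_s e_{w_s}$. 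Hence, letting $n\to\infty$, $\Phi(D)^n(f^{\,j})\to0$ for $j<m$, while $\Phi(D)^n(f^{\,m})\to\sum_s\beta_s e_{w_s}\approx h$. Choosing $n$ large enough produces the required $f$ and $n$.

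\emph{Main obstacle.} The crux is this last uniform estimate — the strict second-order log-convexity of $|\Phi|$ along the half-line $\{t\theta:t>0\}$ at the origin. It is exactly this that fails for $\Phi(z)=e^{az}$, i.e.\ for a translation, for which $\Phi(D)^n$ is an algebra automorphism and the Aron--Conejero--Peris--Seoane multiplicity obstruction rules out a hypercyclic algebra; the hypothesis $\Phi''(0)\Phi(0)\neq\Phi'(0)^2$ is precisely what rescues the argument. A second delicate point is the bookkeeping that confines every frequency produced by $f,f^{2},\dots,f^{m}$ — apart from the finitely many ``active'' ones clustered at $0$ — to $\Phi^{-1}(\D)$: this forces the modes of $g$ to sit near $a$, and it is why one needs an arithmetic progression $\{ja\}_{j=1}^{m}$ of length $m$, since the mixed products $g^{\,j-\ell}u^{\ell}$ spread those frequencies over $a,2a,\dots,(m-1)a$. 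Carrying this out uniformly in $n$, coupled to the earlier finite choices of $\theta$, of the $t_s$ and $\beta_s$, and of $g$, is the genuinely technical part.
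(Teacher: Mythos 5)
Your proposal is correct and follows essentially the same route as the paper: you reduce to the Bayart--Matheron transitivity-type criterion (Lemma~\ref{L:0} in the paper), write $f=g+u$ with the ``bulk'' $g$ carrying modes near $a$ so that all mixed products $g^{j-\ell}u^{\ell}$ (and the pure powers $g^{j}$) have frequencies confined to $\Phi^{-1}(\mathbb{D})$, and use a decaying perturbation $u$ with modes at $w_s/m$ calibrated by $d_s^m\Phi(w_s)^n=\beta_s$ so the diagonal term of $u^m$ reconstitutes the target; the key estimate that kills the off-diagonal and lower-order pure-$u$ terms is exactly the strict log-convexity of $|\Phi|$ near $0$ in a direction $\theta$ with $\operatorname{Re}\bigl((\log\Phi)''(0)\theta^2\bigr)>0$, which the paper packages in Lemmas~\ref{L:theta} and~\ref{L:1} and which you establish inline via the Taylor expansion plus Cauchy--Schwarz (an equivalent formulation). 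The only cosmetic caveats are: (1) you should fix $\eta_0$ once and for all so that strict convexity of $t\mapsto\log|\Phi(t\theta)|$ holds on $[0,\eta_0]$ (rather than carrying an $O(t^3)$ error and invoking Cauchy--Schwarz afterward, since the quadratic gap may be small relative to the cubic remainder if the $t_s$ cluster near $\eta_0$ — the convexity statement absorbs this cleanly, exactly as Lemma~\ref{L:1} does); and (2) your closing remark that the mixed products spread frequencies over $a,\dots,(m-1)a$ should read $a,\dots,ma$, since the pure power $g^m$ contributes modes near $ma$.
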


\begin{example}
Let $\Phi(z)=\mbox{cos}(z)$ or $\Phi(z)=\sin(z)+e^{-z}$. Then $\Phi(D)$ supports a hypercyclic algebra.
\end{example}
\begin{proof}
In either case we have $\Phi(0)=1$ and $\Phi''(0)\Phi(0)\ne (\Phi'(0))^2$, and $\Phi^{-1}(\D)$ has arbitrarily long arithmetic progressions of the desired form.
\end{proof}

Example~\ref{Ex:7} below complements Case $(b)3(ii)$ of Theorem~\ref{T:Ts}. 
\begin{example} \label{Ex:7}
Let $\Phi(z)=\frac{\sin(\pi z)}{\pi z}$. Then $\Phi(D)$ has a hypercyclic algebra.  
\end{example}
\begin{proof}
Here $\Phi(z)=\frac{\sin(\pi z)}{\pi z}=\prod_{0\ne k\in\Z} E_1(\frac{z}{k})$ 
has zero set $(z_n)=(-1,1,-2,2,\dots )$ (listed with multiplicity), which is  of divergence type, and 
$
n(r):=\# \{ n\in\mathbb{N}:   |z_n|\le r \} 
$ satisfies that $\limsup_{r\to\infty} \frac{n(r)}{r} =2\ne 0$, so $\Phi$ has order one and positive type, see \cite[Theorem~2.10.3(a)]{Boas}.
Also $\Phi^{-1}(\D)\supset \N$,  and  $\Phi''(0)\Phi(0)\ne (\Phi'(0))^2$ since $\Phi(0)=1$, $\Phi'(0)=0$ and $\Phi''(0)=\frac{\pi^2}{3}$.  So $\Phi(D)$ has a hypercyclic algebra by Theorem~\ref{T:2}.
\end{proof}



Theorem~\ref{T:ma^{m-1}} builds on the arguments used by Bayart in \cite[Theorem~3.1]{bayart}.

\begin{theorem} \label{T:ma^{m-1}}
Let $\Phi$ be of exponential type. Suppose there exist complex scalars $z_0$, $z_1$ with $z_0\in (0, z_1)$ so that
\begin{enumerate}
\item[{\rm (i)}]\ \ \ \ \ $|\Phi |< 1$ on $(0, z_0]$  and 

\item[{\rm (ii)}]\ $|\Phi(z_1)|>\mbox{max}\{ 1, e^{\tau_0 |z_1|}\}$, 
\end{enumerate}
where
\begin{equation} \label{eq:tau_0}
\tau_0=\tau_0(z_1)=\limsup_{r\to\infty} \frac{\log |\Phi (r\frac{z_1}{|z_1|})|}{r}.
\end{equation}
Then $\Phi(D)$ supports a hypercyclic algebra.
\end{theorem}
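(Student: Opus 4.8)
The plan is to verify the by now standard transitivity criterion for hypercyclic algebras. Writing $e_\lambda$ for the function $z\mapsto e^{\lambda z}$, so that $\Phi(D)e_\lambda=\Phi(\lambda)e_\lambda$, I would invoke the criterion of B\`es--Conejero--Papathanasiou (see \cite{bes_conejero_papathanasiou2016convolution}; cf.\ also \cite{bayart}): it is enough to show that for every $m\in\N$, every pair of nonempty open sets $U,V\subseteq H(\C)$ and every neighbourhood $W$ of $0$ in $H(\C)$ there exist $f\in U$ and $N\in\N$ with $\Phi(D)^N(f^j)\in W$ for $1\le j\le m-1$ and $\Phi(D)^N(f^m)\in V$. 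So I fix $m$, a neighbourhood $U$ of some $h_0$ on a compact $L$, a neighbourhood $V$ of some $g$ on a compact $K$, $W$, and an accuracy $\epsilon>0$.

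The first step is a normalisation of $z_1$, and this is where I expect to lean on Bayart's argument in \cite[Theorem~3.1]{bayart}. Put $\hat z=z_1/|z_1|$ and $\alpha(s)=s^{-1}\log|\Phi(s\hat z)|$ for $s>0$. By (i), $\alpha\le0$ on $(0,|z_0|]$; by (ii), $\alpha(|z_1|)>\max\{0,\tau_0\}$; and since $\Phi$ is of exponential type, $\limsup_{s\to\infty}\alpha(s)=\tau_0<\alpha(|z_1|)$ and $\limsup_{s\to0^+}\alpha(s)\le0$. Hence $\alpha$ attains its supremum over $(0,\infty)$; let $r^\ast$ be the smallest maximiser. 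Then $r^\ast>|z_0|$, $\alpha(r^\ast)>\max\{0,\tau_0\}$, and $\alpha(s)<\alpha(r^\ast)$ for every $s\in(0,r^\ast)$, with a uniform gap on each compact subset of $(0,r^\ast)$. As $r^\ast\hat z$ still satisfies (i) and (ii), I replace $z_1$ by $r^\ast\hat z$ and assume from now on that $|z_1|=r^\ast$ has this maximising property.

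Next I build $f=Q_0+u$. For $Q_0$: the span of $\{e_\lambda:|\lambda|<\eta\}$ is dense in $H(\C)$ for every $\eta>0$ (exponents with an accumulation point), so I choose a finite sum $Q_0=\sum_kc_ke_{\lambda_k}$ with $\|Q_0-h_0\|_L<\epsilon$ and all $|\lambda_k|$ so small that every exponent occurring in $Q_0^j$ $(j\le m)$ has modulus $\le|z_0|$; then $\Phi(D)^N(Q_0^j)=\sum_Pb_P\Phi(\rho_P)^Ne_{\rho_P}\to0$ by (i). For $u$: fix $0<\eta''<\eta'$ small (chosen after $m$; $\eta'$ small enough that $\alpha>0$ and $\Phi\ne0$ on the ray-arc at radii in $[|z_1|-m\eta',|z_1|]$), and, using again that exponentials with exponents on a ray-segment span a dense subspace, pick finitely many distinct radii $\rho_1,\dots,\rho_L\in[|z_1|/m-\eta',|z_1|/m-\eta'']$ — taken \emph{dissociated}, in that the $m$-fold sums $\rho_{l_1}+\dots+\rho_{l_m}$ coming from nonconstant multisets are pairwise distinct and distinct from all $m\rho_l$ — together with scalars $\gamma_l$ such that, writing $w_l=\rho_l\hat z$, one has $\|g-\sum_l\gamma_le_{mw_l}\|_K<\epsilon/3$. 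Finally, for a large $N$ to be fixed last, set $d_l=(\gamma_l\Phi(mw_l)^{-N})^{1/m}$ (any branch) and $u=\sum_{l=1}^Ld_le_{w_l}$; since $|\Phi(mw_l)|>1$ we get $|d_l|\asymp e^{-N\alpha(m\rho_l)\rho_l}\to0$, so $\|u\|_L\to0$ and $\|f-h_0\|_L<2\epsilon$ once $N$ is large.

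It then remains to estimate the orbit, and this is the delicate part. Expanding $f^j=(Q_0+u)^j$ and multiplying out the powers of $u$, every resulting term has the form $c\,e_w$ with $1\le i\le j$, $w=w_{l_1}+\dots+w_{l_i}+\rho$ for some $Q_0$-exponent $\rho$ (so $|\rho|\le m\eta_0$, $\eta_0:=\max_k|\lambda_k|$), and $|c|\asymp|d_{l_1}\cdots d_{l_i}|\asymp e^{-N\sum_{r=1}^i\alpha(m\rho_{l_r})\rho_{l_r}}$, whence $\Phi(D)^N(c\,e_w)=c\,\Phi(w)^Ne_w$ has modulus $\asymp\exp\!\bigl(N\bigl[\log|\Phi(w)|-\sum_{r=1}^i\alpha(m\rho_{l_r})\rho_{l_r}\bigr]\bigr)$. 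Now $|w|\approx\rho_{l_1}+\dots+\rho_{l_i}$: for $i<m$ this is $\le i|z_1|/m$, and for $i=m$ with a nonconstant multiset it is $\le|z_1|-m\eta''$ (because the $\rho_l$ are distinct), so in both cases $|w|$ lies in a compact subset of $(0,|z_1|)$. The maximising property gives $\alpha(|w|)<\alpha(|z_1|)$ with a uniform gap, while $\alpha(m\rho_{l_r})\ge\alpha(|z_1|)-\omega(m\eta')$ by continuity of $\alpha$ near $|z_1|$; plugging these in and using $\sum_r\rho_{l_r}\approx|w|$ shows the bracket is $\le-c(m)<0$ as soon as $\eta',\eta'',\eta_0$ are small enough. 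Hence all such terms go to $0$, and the sole surviving contribution is the constant-multiset part of $\Phi(D)^N(u^m)$, namely $\sum_ld_l^m\Phi(mw_l)^Ne_{mw_l}=\sum_l\gamma_le_{mw_l}$, which is within $\epsilon/3$ of $g$ on $K$ by construction. Thus $\Phi(D)^N(f^j)\to0$ for $j<m$ and $\Phi(D)^N(f^m)\to g$ uniformly on $K$, so for $N$ large $\Phi(D)^N(f^j)\in W$ and $\Phi(D)^N(f^m)\in V$, completing the verification. The main obstacle is exactly this last estimate: the cross-terms of $u^m$ sit at frequencies clustering near $z_1$ itself, so knowing only that $|\Phi(z_1)|>1$ does not suffice — it is the maximising property obtained in the normalisation, combined with the freedom to pack the $\rho_l$ in an arbitrarily short interval strictly inside $|z_1|/m$, that renders these terms negligible.
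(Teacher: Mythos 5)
Your proposal takes a genuinely different route from the paper, but it contains a gap at the crucial cross-term estimate for the $m$-fold products.

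Write $\phi(s):=\log|\Phi(s\hat z)|=s\,\alpha(s)$. In your construction, $|d_l|\asymp|\Phi(m\rho_l\hat z)|^{-N/m}=e^{-\frac{N}{m}\phi(m\rho_l)}$, so the $\Phi(D)^N$-image of the $u^m$-term attached to a multiset $(l_1,\dots,l_m)$ has modulus
\[
\Bigl|\prod_{r=1}^m d_{l_r}\Bigr|\,|\Phi(w)|^N\ \asymp\ \exp\Bigl(N\Bigl[\phi\Bigl(\tfrac1m\sum_{r}x_r\Bigr)-\tfrac1m\sum_{r}\phi(x_r)\Bigr]\Bigr),\quad x_r:=m\rho_{l_r},
\]
up to the small $Q_0$-perturbation of the exponent. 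For this to tend to $0$ one needs $\phi$ to be \emph{convex} near $r^\ast$ (Jensen), or at least a strict Jensen gap for non-constant multisets. But $r^\ast$ is a local maximiser of $\alpha=\phi/s$, and differentiating twice gives $\phi''(r^\ast)\le 0$: $\phi$ is \emph{concave} at $r^\ast$, so Jensen goes the wrong way. Concretely, writing $\rho_{l_r}=r^\ast/m-\delta_r$ with $\delta_r\in[\eta'',\eta']$ and expanding to second order (using $\alpha'(r^\ast)=0$, $\alpha''(r^\ast)=-2A\le 0$), the bracket above equals $A\,r^\ast\bigl(m\sum_r\delta_r^2-(\sum_r\delta_r)^2\bigr)+O(\delta^3)$, which by Cauchy--Schwarz is $\ge 0$, strictly positive for non-constant multisets when $A>0$. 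Thus these terms grow like $e^{cN}$ and swamp the target. Your sketch hides this under the two inequalities ``$\alpha(|w|)\le\alpha(r^\ast)-c_1$'' and ``$\alpha(m\rho_{l_r})\ge\alpha(r^\ast)-\omega(m\eta')$'': with a non-degenerate maximum, $c_1\asymp A(m\eta'')^2$ while $\omega(m\eta')\asymp A(m\eta')^2>c_1$ since $\eta'>\eta''$, so the upper bound you obtain for the bracket is in fact positive. This is precisely the obstruction that forces one to avoid putting $m$ high-frequency factors of $u$ on equal footing.

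The paper avoids this by a structurally different construction (Theorem~\ref{T:bayart}): the high-frequency part $R_N$ appears only once, and the $V$-target is produced by the $A^{m-1}R_N$ term; the terms with $d\ge 2$ factors of $R_N$ sit near $d\,w_0$ and are controlled by the hypothesis $|\Phi(dw_0)|<|\Phi(w_0)|^d$. That hypothesis is not assumed outright; the whole content of the paper's proof of Theorem~\ref{T:ma^{m-1}} is a contradiction/iteration argument producing a point $w_0$ at which this multiplicative inequality \emph{and} $|\Phi(w_0)|>1$ \emph{and} a local monotonicity hold simultaneously, by showing that if no such $w_0$ existed then $|\Phi|$ would grow along the ray faster than the type $\tau_0$ allows, contradicting (ii). Your normalization step (replacing $z_1$ by the smallest maximiser of $\alpha$) is a reasonable attempt to encode the same information, but it produces concavity of $\phi$ rather than the multiplicative inequality of Bayart's criterion, and the resulting $u^m$-construction cannot be salvaged without further input.
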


\begin{remark}\ \label{R:T:ma^{m-1}}

\begin{itemize}
\item[(a)]\
Every non-constant entire function $\Phi$ with $|\Phi(0)|=1$ satisfies condition $(i)$ in  a strong way. Indeed if $\Phi(z)=a_0+a_nz^n +\mbox{o}(z^n)$ with $a_n\ne 0$ and $n\ge 1$, then there exists 
a subdivision of the plane into 2n consecutive sectors $S_0,\dots, S_{2n-1}$, each of angle $\frac{\pi}{n}$, so that for each $k=1,\dots, n$ we have: On each ray lying in the interior of $S_{2k-1}$ the function $\Phi$ has initially modulus strictly smaller than one, and on each ray lying in the interior of $S_{2k}$ the function $\Phi$ initially has modulus strictly larger than one, see~\cite[pp 236--7]{gamelin}.
\item[(b)]\ Notice that
\[
\tau_0=\tau_0(z_1)=\begin{cases}
\ \ \ \  0 \ &\mbox{ if $\rho_{z_1}< 1$, }\\
h(\mbox{Arg}(z_1)) &\mbox{ if $\rho_{z_1}= 1$, where}
\end{cases}
\]
$h=h(\theta)=\limsup_{t\to\infty} t^{-1} \log |\Phi(t e^{i\theta})|$ is the Phragm\'{e}n-Lindel\"{o}f indicator function of $\Phi$ and where $\rho_{z_1}$ is the growth order of $\Phi$ in the direction of $z_1$, that is,\[\rho_{z_1}:=\limsup_{t\to\infty} \frac{\log\log |\Phi(t\frac{z_1}{|z_1|})|}{\log t}.\]
In particular, $\tau_0(z_1)=0$ whenever $\Phi$ is of subexponential growth, and
Theorem~\ref{T:ma^{m-1}} may be re-stated as follows:
\begin{quote}
{\em ``Let $\Phi$ be entire of exponential type, and let 
$h$ be its  Phragm\'{e}n-Lindel\"{o}f indicator function.
Suppose there exist $\theta\in [0,2\pi)$, and positive scalars  $r,R$ so that
\[
\begin{cases}
\ |\Phi (t e^{i\theta})| <1  &\mbox{ for $0<t<r$, and }\\
|\Phi (R e^{i\theta})| > \mbox{max}\{ 1, e^{h(\theta) R} \}.
\end{cases}
\]
Then $\Phi(D)$ supports a hypercyclic algebra.''}
\end{quote}
\end{itemize}
\end{remark}

Once a hypercyclic algebra is determined for a given operator, it is natural to ask how large such an algebra can be. While the hypercyclic algebras  for $D$ obtained in~\cite{shkarin2010on,bayart_matheron2009dynamics} were singly generated and thus not dense in $H(\mathbb{C})$, it was recently shown that $D$ and many other convolution operators support hypercyclic algebras that are both dense and infinitely generated~\cite{GroFal18,BP_algebrable,bernal-gonzalez_calderon-moreno}.  Moreover, for the case when $|\Phi(0)|<1$, the convolution operator $\Phi(D)$ supports a dense infinitely generated hypercyclic algebra if and only if it is not a scalar multiple of an exponential function, see~\cite[Theorem 6.3]{bayart}.  For the unimodular case we can say the following.

\begin{theorem} \label{T:IG}
Let $\Phi$ be a non-constant entire function of exponential type, with $|\Phi(0)|=1$. Then the generic element $f=(f_n)_{n=1}^\infty$ of $H(\mathbb{C})^\mathbb{N}$ freely generates a dense hypercyclic algebra for $\Phi(D)$ in each of the following cases:
\begin{enumerate}
\item[{\rm (a)}]\ $\Phi$ is of subexponential growth and  $\mbox{min}\{ n\in\mathbb{N}:  \Phi^{(n)}(0)\ne 0 \}$ is odd.

\item[{\rm (b)}]\ $\Phi$ is of the form $
\Phi(z)=e^{az+b} p(z)$
for some $a, b\in\mathbb{C}$ with $a\ne 0=\mbox{Re}(b)$ and some
non-constant polynomial $ p(z)=1+a_1z+a_2z^2+\dots +a_rz^r$ with $a_1a^{-1}\in \C\setminus \R$ or with both $2a_2\ne a_1^2$ and $a_1+a\ne 0$. 

\item[{\rm (c)}]\  $\Phi$ has Hadamard factorization
$
\Phi(z) = e^{az+b} \prod_{n=1}^\infty E_p\left( \frac{z}{z_n} \right)
$
satisfying \[ \sum_{n=1}^\infty  z_n^{-2}\ne 0, \   a\ne 0=\mbox{Re}(b),  \mbox{ and }\sum_{n=1}^\infty z_n^{-1}\ne a \mbox{ if } \sum_{n=1}^\infty |z_n|^{-1}<\infty.
\]
\end{enumerate}

\end{theorem}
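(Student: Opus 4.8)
The plan is to combine the constructions behind Theorems~\ref{T:2} and~\ref{T:ma^{m-1}} with the Baire-category machinery for densely and infinitely generated hypercyclic algebras from \cite{GroFal18,BP_algebrable,bernal-gonzalez_calderon-moreno,bayart}. I would work in the separable Fr\'echet space $H(\C)^{\N}$, which is a Baire space, and show that the set $\mathcal G$ of tuples $f=(f_n)$ that are algebraically free, generate a dense subalgebra of $H(\C)$, and satisfy that $P(f_1,\dots,f_m)$ is hypercyclic for $\Phi(D)$ for every $m\in\N$ and every nonzero $P\in\C[X_1,\dots,X_m]$ with $P(0)=0$, is comeager; any $f\in\mathcal G$ then freely generates a dense hypercyclic algebra, which is the conclusion. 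Algebraic freeness is generic since, for each fixed nonzero $P$, the set $\{f:P(f_1,\dots,f_m)=0\}$ is closed with empty interior (on a finite-dimensional slice $P(f)$ becomes a nontrivial analytic map). Density of the generated algebra is generic too: for each member $V_j$ of a countable base of $H(\C)$, the set $\{f:f_k\in V_j\text{ for some }k\}$ is open and dense, since a given basic neighborhood of $H(\C)^{\N}$ constrains only finitely many coordinates, so a later $f_k$ can be placed inside $V_j$ freely.

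The substantial point is the hypercyclicity clause. With $O_j=\bigcup_N\Phi(D)^{-N}V_j$ (open and dense, as $\Phi(D)$ is hypercyclic on $H(\C)$), the failure of the clause is $\bigcup_{m,d,j}\widetilde B_{m,d,j}$, where $\widetilde B_{m,d,j}=\{f:P(f_1,\dots,f_m)\notin O_j\text{ for some }P\in S_{m,d}\}$ and $S_{m,d}$ is the unit sphere of the finite-dimensional space of polynomials in $m$ variables of degree $\le d$ with zero constant term. Since $S_{m,d}$ is compact and $(f,P)\mapsto P(f_1,\dots,f_m)$ is continuous, each $\widetilde B_{m,d,j}$ is closed, so it suffices to prove that its complement
\[
G_{m,d,j}=\bigl\{\,f:\ \text{for every }P\in S_{m,d}\ \text{there is }N\text{ with}\ \Phi(D)^N\bigl(P(f_1,\dots,f_m)\bigr)\in V_j\,\bigr\}
\]
is dense. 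Equivalently — this is essentially the transitivity criterion underlying \cite{BP_algebrable,bayart} — given a tuple $g=(g_n)$, a neighborhood $\mathcal U$ of it, and $m,d,j$, I must produce $f\in\mathcal U$ so that for \emph{every} normalized $P$ of degree $\le d$ in $m$ variables, \emph{some} iterate of $\Phi(D)$ sends $P(f_1,\dots,f_m)$ into $V_j$.

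This is where the perturbative eigenvector arguments of the proofs of Theorems~\ref{T:2} and~\ref{T:ma^{m-1}} come in. I would perturb the generators by finite combinations $f_j=g_j+\sum_\ell\varepsilon_{j,\ell}\,e_{\lambda_\ell}$ of exponentials $e_\lambda(z)=e^{\lambda z}$, each a $\Phi(D)$-eigenvector with eigenvalue $\Phi(\lambda)$; expanding $P(f_1,\dots,f_m)$ multinomially turns $\Phi(D)^N(P(f_1,\dots,f_m))$ into a combination of the $e_{\lambda_{\ell_1}+\dots+\lambda_{\ell_s}}$ weighted by the scalars $\Phi(\lambda_{\ell_1}+\dots+\lambda_{\ell_s})^N$, plus terms of the shape $\Phi(D)^N(e_\mu\,h)=e_\mu\,\Psi_{N,\mu}(D)h$ with $\Psi_{N,\mu}(z)=\Phi(z+\mu)^N$ involving the unperturbed $g_i$'s. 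One places the exponents $\lambda_\ell$ inside the open set $\Phi^{-1}(\D)$ — either along an arbitrarily long arithmetic progression $\{ja\}_{j=1}^{M}$ issuing from a sector of initial decrease (cf.\ Remark~\ref{R:T:ma^{m-1}}(a)) as for Theorem~\ref{T:2}, or along such a ray reaching into a region of genuine growth as for Theorem~\ref{T:ma^{m-1}} — and takes the $\varepsilon_{j,\ell}$ tiny; then $|\Phi|<1$ on those exponents drives the unwanted contributions to $0$ as $N\to\infty$ (the offending second-order coefficient of the linear part being controlled by $\Phi''(0)\Phi(0)-\Phi'(0)^2\ne0$, i.e.\ $2a_2\ne a_1^2$ in case (b), $\sum z_n^{-2}\ne0$ in case (c)), while the growth data lets a distinguished term realize an arbitrary target. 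The additional hypotheses enter precisely here: $\mathrm{Re}(b)=0$ is just the normalization $|\Phi(0)|=1$; the conditions $a_1+a\ne0$ in (b) and $\sum z_n^{-1}\ne a$ in (c) both amount to $\Phi'(0)\ne0$ (and are automatic once the genus is $\ge1$, since then $\Phi'(0)=\Phi(0)\,a\ne0$), supplying the first-order term that aligns a ``decreasing'' sector with a growth direction in the regime $2a_2\ne a_1^2$; and in the subexponential case (a) the requirement that $\min\{n\ge1:\Phi^{(n)}(0)\ne0\}$ be odd plays the analogous, parity-flavored, role.

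The main obstacle is the uniformity over $P$: a \emph{single} choice of exponents $\{\lambda_\ell\}$ and coefficients $\{\varepsilon_{j,\ell}\}$ must serve \emph{all} $P\in S_{m,d}$ at once, with $N$ allowed to depend on $P$. I would handle this by assigning to each monomial $X^\alpha$ of degree $\le d$ a private block of exponents, chosen generically in $\Phi^{-1}(\D)$ so that the exponential sums produced by distinct monomials are pairwise distinct; given $P=\sum c_\alpha X^\alpha$ on the unit sphere, some coefficient satisfies $|c_\alpha|\ge(\#\{\alpha\})^{-1/2}$, and activating that block with an appropriate $N$ pushes the corresponding exponential term onto the target while everything else decays. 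Checking that all cross-terms with the $g_i$'s stay simultaneously controllable, uniformly over the compact sphere $S_{m,d}$, is the one genuinely laborious step; the remaining estimates are a direct transcription of those in the proofs of Theorems~\ref{T:2} and~\ref{T:ma^{m-1}} into the product-topology Baire scheme above.
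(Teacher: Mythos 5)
The broad strategy here is right — Baire category on $H(\mathbb{C})^{\mathbb{N}}$, generic algebraic freeness and density, and eigenvector perturbations in the spirit of Theorems~\ref{T:2} and~\ref{T:ma^{m-1}} to handle hypercyclicity — but the route you take through the hard step differs from the paper's, and the crucial point you flag as ``genuinely laborious'' is precisely the one the paper resolves by a device you don't reproduce, so the proposal as written has a real gap.

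You formulate the density goal at the level of the compact sphere $S_{m,d}$ of polynomials: a single perturbed tuple $f\in\mathcal U$ must satisfy $\Phi(D)^N(P(f_1,\dots,f_m))\in V_j$ for every $P\in S_{m,d}$, with $N$ allowed to depend on $P$. To make the dominant exponential term hit $V_j$, the constructions in the paper (see equations~\eqref{eq:4.4} and~\eqref{eq:4.4*}) tune the perturbation coefficients as functions of the iterate $n$; those coefficients are baked into $f$ and cannot be retuned after $P$ is handed to you. Your ``private block per monomial, activate with an appropriate $N$'' idea therefore needs the activated block to land exactly on the target while every other term decays, for a single fixed $f$ and varying $N$, uniformly over $P$ — and you give no mechanism for this. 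This is not a detail to transcribe from the proofs of Theorems~\ref{T:2} and~\ref{T:ma^{m-1}}; it is the heart of the matter.

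The paper avoids the uniformity-over-the-sphere problem entirely by invoking Theorem~\ref{T:{fromBP_algebrable}} (the B\`es--Papathanasiou criterion from~\cite{BP_algebrable}): it suffices to handle, for each $N\ge 2$, each \emph{finite} set $A\subset\mathbb{N}_0^N$ of multi-indices and each choice of open sets, and moreover one gets to \emph{pick a single distinguished} $\beta\in A$ (which may depend on $A$ but not on the open sets) and only needs $T^n(f^\beta)\in V$ while $T^n(f^\alpha)\in W$ for $\alpha\ne\beta$. Selecting $\beta$ via Lemma~\ref{L:BP:3.1} (a functional $x\mapsto\sum k_i x_i$ injective on $A$) and, crucially, inserting the weighted constants $n^{-k_i}$ into the later coordinates $f_i=L_i+n^{-k_i}$ $(i\ge 2)$ is what makes the unwanted monomials $f^\alpha$, $\alpha\ne\beta$, decay \emph{relative to} $f^\beta$: the factor $n^{\sum_i k_i(\beta_i-\alpha_i)}\to 0$ by~\eqref{eq:a1}. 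This weighting trick is absent from your sketch, and without it one cannot make a single $f$ serve all monomials at once even in the finite-$A$ setting, let alone over an uncountable sphere. Your reading of the extra hypotheses ($\Phi'(0)\ne 0$ in (b), (c), the odd-multiplicity condition in (a) as its analogue) is correct; what is missing is the reduction to the finite-multi-index criterion and the $n^{-k_i}$-weighted perturbation that drives Theorems~\ref{T:2bisbis} and~\ref{T:7bisbis}.
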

For a statement to hold at a generic element of an $F$-space $X$ we mean that it holds at each element of a dense $G_\delta$ subset of $X$.
\begin{remark} In the language of {\em Lineability}~\cite{aron_bernal-gonzalez_pellegrino_seoane-sepulveda2015lineability}, Theorem~\ref{T:IG} states in particular that under each of the assumptions $(a)-(c)$ the set of hypercyclic vectors of the operator $\Phi(D)$ is 
{\em densely strongly algebrable} in the sense of Bartoszewicz and G{\l}{\c a}b \cite{barto}, see also~\cite[Section 2]{BP_algebrable}:
A subset $S$ of a commutative topological algebra $X$ is said to be densely strongly algebrable provided $S\cup\{ 0\}$ contains a dense subalgebra of $X$ that is induced by an infinite set of {\em free} generators.
\end{remark}

The paper is organized as follows. In Section~\ref{S:T:2} we show Theorem~\ref{T:2}. In Section~\ref{S:T:ma^{m-1}} we show Theorem~\ref{T:ma^{m-1}} and (all but Case (b)3(ii) of) Theorem~\ref{T:Ts} .
In Section~\ref{S:T:IG} we show Theorem~\ref{T:IG}, which require  
stronger versions of Theorem~\ref{T:2} and Theorem~\ref{T:ma^{m-1}} -Theorem~\ref{T:2bisbis} and Theorem~\ref{T:7bisbis}, respectively- whose proofs 
 are necessarily more technical. A particular case of the former, 
 Corollary~\ref{C:Phi'(0)}, establishes the pending Case (b)3(ii) of Theorem~\ref{T:Ts}.

\subsection{Notation and preliminaries} \label{Ss:Notation} 
We recall that the order $\rho$ of a constant entire function is defined as zero, and for a given non-constant $f\in H(\mathbb{C})$ it is given by
$
\rho:= \limsup_{r\to\infty}  \frac{\log \log M(r)}{\log r}$,
where $M(r)=M_f(r)=\mbox{sup}_{|z|\le r}|f(z)|$.  When $f$ has finite order $\rho$,  its type $\tau$ is given by
$\tau:=\limsup_{r\to\infty} r^{-\rho}\, \log M(r)$. 
Following Boas~\cite{Boas}, we say that $\Phi$ is of growth $(\rho, \tau)$ provided it is either of order less than $\rho$ or, if of order equal to $\rho$, it has type not exceeding $\tau$. We also say that $\Phi$ is of {\em subexponential growth} provided it is of growth $(1,0)$, and that $\Phi$ is of {\em exponential type} provided it is of growth $(1,\tau)$ for some $\tau<\infty$.   An entire function $\Phi(z)=\sum_{n=0}^\infty a_n z^n$ is of exponential type precisely when there exist constants $C,R\in (1,\infty)$ so that $|a_n|\le C \, R^n$ $(n\in\N )$, and this holds precisely when the operator $\Phi(D):H(\mathbb{C})\to H(\mathbb{C})$, $f\mapsto \sum_{n=0}^\infty a_n D^nf$, is well defined.

 According to the Hadamard Factorization Theorem~ \cite[Theorem~2.7.1]{Boas}, each  $f\in H(\mathbb{C})$ of finite order $\rho$   with  zero set $(z_n)$ listed with multiplicity and satisfying $0<|z_1|\le |z_2|\le \dots$ has unique factorization
\begin{equation} \label{eq:m1}
f(z)= e^{q(z)} P(z),
\end{equation}
for some polynomial $q(z)$ of degree not exceeding $\rho$ and infinite product
$
P(z)=\prod_{n=1}^\infty E_p(\frac{z}{z_n})
$  of genus
 \[p:=\mbox{min}\{ k\in\N_0:  \sum_{n=1}^\infty \frac{1}{|z_n|^k}=\infty \mbox{ and } \sum_{n=1}^\infty \frac{1}{|z_n|^{k+1}}<\infty \}.\]
Here $E_p$ denotes Weierstrass' canonical factor of order $p$, that is, $E_p(z)=1-z$ when $p=0$ and $E_p(z)=(1-z) e^{z+\frac{z^2}{2}+\cdots+\frac{z^p}{p}} $ when $p\in\N$, where $\mathbb{N}$ and $\mathbb{N}_0$ denote the sets of positive integers and of non-negative integers, respectively.
In particular, each $\Phi\in H(\mathbb{C})$  of exponential type with $|\Phi(0)|=1$ and which is not zero-free has unique factorization \[\Phi(z)=e^{az+b} P(z)\] for some  $a, b\in \mathbb{C}$ with $\mbox{Re}(b)=0$,
where
\[
P(z)=\begin{cases}
\prod_{n=1}^\infty (1-\frac{z}{z_n}) &\mbox{ if } \sum_{n=1}^\infty |z_n|^{-1}<\infty,\\
  \prod_{n=1}^\infty \{ (1-\frac{z}{z_n}) e^{\frac{z}{z_n}} \}&\mbox{ if } \sum_{n=1}^\infty |z_n|^{-1}=\infty,
\end{cases}
\]
and where $(z_n)$ is the (possibly finite) zero-set of $\Phi$, listed with multiplicity.

\section{Proof of Theorem~\ref{T:2}} \label{S:T:2}
We show Theorem~\ref{T:2} by means of the following sufficient criteria  for the existence of hypercyclic algebras.
\begin{lemma} \label{L:0}   {\bf {\rm (Bayart and Matheron~\cite[Remark~8.26]{bayart_matheron2009dynamics}})}
Let $T$ be an operator on a separable $F$-algebra $X$ so that for each triple $(U, V, W)$ of non-empty open subsets of $X$ with $0\in W$ and for each $m\in\N$ there exists $f\in U$ and $q\in\N$ so that
\begin{equation}  \label{eq:0*}
\begin{cases}
T^q(f^j) \in W \ \ \ \mbox{ for } 0\le j < m,\\
T^q (f^m)\in V.
\end{cases}
\end{equation}
Then the generic element of $X$ generates a hypercyclic algebra for $T$.
\end{lemma}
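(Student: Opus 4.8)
The plan is to run a Baire category argument in the completely metrizable space $X$. For $f\in X$ I write $A(f)=\{P(f):P\in\C[z],\ P(0)=0\}$ for the (not necessarily closed) subalgebra generated by $f$, and I fix a countable basis $\{V_\ell\}_{\ell\in\N}$ of nonempty open subsets of $X$. The point is to slice the space of polynomials into countably many compact pieces: for $d,n\in\N$ let $K_{n,d}=\{a=(a_1,\dots,a_d)\in\C^d:\,|a_d|\ge \tfrac1n\ \text{and}\ \max_{1\le j\le d}|a_j|\le n\}$, which is compact, and write $P_a(z)=\sum_{j=1}^d a_jz^j$; as $(d,n)$ ranges over $\N^2$ the polynomials $P_a$ with $a\in K_{n,d}$ run through all nonzero polynomials with vanishing constant term. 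For $(d,n,\ell)\in\N^3$ I set
\[
O_{d,n,\ell}=\Big\{f\in X:\ \text{for every }a\in K_{n,d}\ \text{there is }q\in\N\ \text{with}\ T^q\big(P_a(f)\big)\in V_\ell\Big\}.
\]
If $G:=\bigcap_{d,n,\ell}O_{d,n,\ell}$ is a dense $G_\delta$ set, then for every $f\in G$ the orbit of each nonzero $P_a(f)\in A(f)$ meets every $V_\ell$, so $A(f)$ is a hypercyclic algebra; in particular $P_a(f)\ne 0$ for every nonzero $P_a$ (else $0$ would be ``hypercyclic''), so $f$ even generates a \emph{free} algebra. Thus the task reduces to showing that each $O_{d,n,\ell}$ is open and dense.

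For \emph{openness} I would use compactness of the slice $K_{n,d}$: given $f_0\in O_{d,n,\ell}$, pick for each $a\in K_{n,d}$ an exponent $q(a)$ with $T^{q(a)}(P_a(f_0))\in V_\ell$; by joint continuity of $(f,b)\mapsto T^{q(a)}(P_b(f))$ (here the $F$-algebra structure enters) and openness of $V_\ell$ there is a basic neighbourhood $U_a\times B_a$ of $(f_0,a)$ mapped into $V_\ell$. Covering $K_{n,d}$ by finitely many $B_{a_1},\dots,B_{a_t}$, the set $\bigcap_{i\le t}U_{a_i}$ is a neighbourhood of $f_0$ contained in $O_{d,n,\ell}$.

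For \emph{density} I would invoke the hypothesis of the Lemma finitely many times, always with ``$m$'' $=d$. Fix a nonempty open $U$ and a target, i.e. $g\in X$ and $r>0$ with $B(g,r)\subset V_\ell$. First choose $\delta>0$ small (depending only on $g,n,r$) and cover $K_{n,d}$ by balls $B(a^{(1)},\delta),\dots,B(a^{(s)},\delta)$. Then build a decreasing chain of nonempty open sets $U=U_0\supset U_1\supset\cdots\supset U_s$: given $U_{i-1}$, apply the hypothesis to the triple $\bigl(U_{i-1},V^{(i)},W^{(i)}\bigr)$ with $m=d$, where $V^{(i)}$ is a small ball about $g/a^{(i)}_d$ and $W^{(i)}$ a small ball about $0$, obtaining $f\in U_{i-1}$ and $q_i\in\N$ with $T^{q_i}(f^j)\in W^{(i)}$ for $0\le j<d$ and $T^{q_i}(f^d)\in V^{(i)}$; as these are open conditions, shrink to a nonempty open $U_i\subset U_{i-1}$ on which they persist. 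Any $f\in U_s\subset U$ then works, since for each $i$ and each $a\in B(a^{(i)},\delta)$ one has
\[
T^{q_i}\bigl(P_a(f)\bigr)=a^{(i)}_d\,T^{q_i}(f^d)+\bigl(a_d-a^{(i)}_d\bigr)T^{q_i}(f^d)+\sum_{j=1}^{d-1}a_j\,T^{q_i}(f^j)\in B(g,r)\subset V_\ell,
\]
the first term lying within $r/3$ of $g$ (because $T^{q_i}(f^d)$ is close to $g/a^{(i)}_d$ and $|a^{(i)}_d|\le n$) and the other two within $r/3$ of $0$ provided $\delta$ and the radii of $W^{(i)},V^{(i)}$ were taken small enough, using that the $|a_j|$ are bounded and that $T^{q_i}(f^d)$ lies in a set bounded in terms of $g,n$. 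Hence $f\in O_{d,n,\ell}$, and Baire's theorem then gives that $G$ is dense $G_\delta$.

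The main obstacle is the soft point in the first paragraph: $A(f)$ involves uncountably many polynomials $P$, so one cannot take a naive countable intersection over them. Once this is circumvented by slicing $\C^d\setminus\{0\}$ into the compact shells $K_{n,d}$, compactness does the real work twice — it yields openness of $O_{d,n,\ell}$ with a neighbourhood uniform in $a$, and it lets density be reduced to a \emph{finite} iteration of the hypothesis over a $\delta$-net of $K_{n,d}$. The rest is routine $F$-space bookkeeping: quantifying ``small'' when scalar multiplication is continuous but not isometric.
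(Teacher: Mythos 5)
Your Baire-category argument is correct and is essentially the proof in Bayart--Matheron~\cite[Remark~8.26]{bayart_matheron2009dynamics}, which the paper cites for this lemma without reproducing: the decisive idea is to slice the coefficient space of polynomials with vanishing constant term into the compact shells $K_{n,d}$ and to use their compactness twice, once to get openness of the sets $O_{d,n,\ell}$ via a finite subcover, and once to reduce density to a finite chain of applications of the hypothesis over a $\delta$-net, with the $F$-algebra bookkeeping handled exactly as you do. One minor remark: the display in the Lemma should read $1\le j<m$ rather than $0\le j<m$ (indeed the paper invokes it with $1\le j<m$ in the proof of Theorem~\ref{T:2}, and for $|\Phi(0)|=1$ the condition $T^q(f^0)=\Phi(0)^q\in W$ would fail for small $W$); your verification only uses $j\ge 1$, so this slip in the statement does no harm to your argument.
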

In order to apply Lemma~\ref{L:0} we first establish the next two lemmas.

\begin{lemma} \label{L:theta}
Let $A_1, A_2$ be complex scalars, where $A_2$ is non-zero. Then there exists $\theta\in\R$ so that
\begin{equation} \label{eq:Ltheta}
\begin{aligned}
0 &\le \mbox{Re}\left( A_1\, e^{i\theta} \right) \\
0 &< \mbox{Re}\left( A_2\, e^{i 2 \theta} \right).
\end{aligned}
\end{equation}
Moreover, if $A_1$ is also non-zero, then we may conclude both inequalities in $\eqref{eq:Ltheta}$ to be strict.
\end{lemma}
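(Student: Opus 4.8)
The plan is to reduce the two conditions in \eqref{eq:Ltheta} to elementary trigonometric inequalities on the circle $\R/2\pi\Z$ and then exploit a short covering/connectedness argument. Writing $A_1=r_1e^{i\alpha_1}$ and $A_2=r_2e^{i\alpha_2}$ with $r_2>0$ and $r_1\ge 0$, the inequalities \eqref{eq:Ltheta} read $r_1\cos(\theta+\alpha_1)\ge 0$ and $r_2\cos(2\theta+\alpha_2)>0$, and the ``moreover'' part additionally has $r_1>0$. So it suffices to locate $\theta$ with $\cos(2\theta+\alpha_2)>0$ together with $\cos(\theta+\alpha_1)\ge 0$ (respectively $>0$). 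I would introduce
\[
S:=\bigl\{\theta\in\R/2\pi\Z:\ \cos(2\theta+\alpha_2)>0\bigr\}
\]
and note that $S$ is the union of the two antipodal open arcs $I$ and $I+\pi$, where $I=(-\tfrac{\alpha_2}{2}-\tfrac{\pi}{4},\,-\tfrac{\alpha_2}{2}+\tfrac{\pi}{4})$ has length $\tfrac{\pi}{2}$; in particular $S$ is invariant under $\theta\mapsto\theta+\pi$ and its complement $\R/2\pi\Z\setminus S$ is a disjoint union of two closed arcs, each of length $\tfrac{\pi}{2}$.

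If $A_1=0$, then $r_1=0$ and any $\theta\in I$ already satisfies \eqref{eq:Ltheta} (with the first line an equality). Suppose now $A_1\ne 0$, i.e. $r_1>0$. I would set $H:=\{\theta\in\R/2\pi\Z:\ \cos(\theta+\alpha_1)>0\}$, which is a single open arc of length $\pi$, and claim $H\cap S\ne\emptyset$. Indeed, if $H$ and $S$ were disjoint, then the connected set $H$ would lie in the complement of $S$; since that complement has exactly two connected components, each a closed arc of length $\tfrac{\pi}{2}$, the arc $H$ would be contained in one of them, contradicting that $H$ has length $\pi>\tfrac{\pi}{2}$. Any $\theta\in H\cap S$ then makes both inequalities in \eqref{eq:Ltheta} strict, which in particular establishes the (non-strict) main statement as well.

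The only real content is this last covering observation — that a half-circle cannot be squeezed into the complement of $S$; the rest is bookkeeping. The point to watch is that the factor $2$ in $e^{2i\theta}$ makes $S$ a union of \emph{two} disjoint antipodal arcs rather than a single half-circle, so its complement genuinely splits into two components of length $\tfrac{\pi}{2}$, and it is precisely this splitting (together with $\pi>\tfrac{\pi}{2}$) that forces the half-circle $H$ to meet $S$.
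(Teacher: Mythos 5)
Your proof is correct, and it takes a genuinely different route from the paper's. The paper writes down, for $A_1\ne 0$, an explicit $4\times 4$ table assigning a suitable $\theta$ to each pair of quadrants $(Q(A_1),Q(A_2))$, which requires carefully delineating half-open quadrants and verifying sixteen cases (some with ``$\theta$ slightly larger than $\pi$'' type choices). You instead pass to the angular variables $\alpha_1,\alpha_2$ and observe that the second condition cuts out two antipodal open arcs of length $\tfrac{\pi}{2}$ on $\R/2\pi\Z$, while the first (strict) condition cuts out a single open half-circle $H$ of length $\pi$; since the complement of the two arcs is a disjoint union of two closed arcs of length $\tfrac{\pi}{2}$, a connected set of length $\pi$ cannot avoid both arcs, so $H\cap S\ne\emptyset$. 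The key point you flag — that the doubling $\theta\mapsto 2\theta$ splits $S$ into two components whose complement also has two short components — is exactly right, and the length/connectedness comparison $\pi>\tfrac{\pi}{2}$ closes the argument cleanly. Your approach trades the paper's explicit (and checkable) table for a shorter conceptual argument; the paper's version produces a concrete $\theta$ in each case, while yours proves existence non-constructively but with essentially no casework and no risk of a mis-filled table entry.
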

\begin{proof}
If $A_1=0$ the conclusion follows for example for $\theta = \frac{1}{2} \mbox{Arg} (\overline{A_2})$. If $A_1\ne 0$, we may select $\theta$ from the following table, according to which quadrants $A_1$ and $A_2$ belong to.

\begin{table}[h!]
\centering
\begin{tabular}{|c| c c c c  |} 
 \hline
 \diagbox{$A_2$}{$A_1$} & \ \ I  &\  II & III  &\ IV  \\ [0.5ex] 
 \hline
 I & \  \ 0  & \   $\pi^{+}$  &\  $\pi$ &\  $0$ \\ 
 II  & $(-\frac{\pi}{2})^+$ &  $(-\frac{\pi}{2})^+$ & \ $(\frac{\pi}{2})^+$ & $(\frac{\pi}{2})^+$ \\
 {\rm III} & $(-\frac{\pi}{2})^+$ &  $-\frac{\pi}{2}$ &\ $\frac{\pi}{2}$ & $\frac{\pi}{2}$ \\
 {\rm IV} &\ \ $0^{+}$  &  \  $\pi^{+} $ &\  $\pi^+$ &\  $0^{+}$ \\ 
 \hline
\end{tabular}
\label{table:1}
\end{table}
For example, if both $A_1$ and $A_2$ lie in the first quadrant we may choose $\theta = 0$. If $A_1$ and $A_2$ are in the second and first quadrant respectively, then we may choose $\theta = \pi^+$, that is, $\theta$ close to $\pi$ and slightly larger than $\pi$.
For the purpose of using this table we understand the first quadrant ${\rm QI}$ to be given by
\[
{\rm QI:}=\{ z\in \C: \ \mbox{Re}(z)>0 \ \mbox{ and } \mbox{Im}(z) \ge 0   \},
\]
and the remaining quadrants to be given similarly:
\[
\begin{aligned}
{\rm QII}&:=\{ z\in \C: \ \mbox{Re}(z)\le 0 \ \mbox{ and } \mbox{Im}(z) > 0   \}\\
{\rm QIII}&:=\{ z\in \C: \ \mbox{Re}(z)< 0 \ \mbox{ and } \mbox{Im}(z) \le 0   \}\\
{\rm QIV}&:=\{ z\in \C: \ \mbox{Re}(z)\ge0 \ \mbox{ and } \mbox{Im}(z) < 0   \}.
\end{aligned}
\]
The conclusion of the lemma now follows.
\end{proof}

\begin{lemma} \label{L:1}
Let $\Phi$ be entire 
and let $w_0$ so that 
\begin{equation} \label{eq:2.2bis}
\Phi(w_0)\ne 0 \mbox{ \ \  and \ \ }
\Phi''(w_0)\Phi(w_0)\ne \Phi'(w_0)^2. 
\end{equation}
Then we have:
\begin{enumerate}
\item[(a)]\ For each $\delta >0$ there exists $w_1$ in $D(w_0, \delta)$ so that the function
\[ 
[w_0,w_1]\to \R, \ z\mapsto \log|\Phi(z)|,
\] 
is strictly convex and $\log |\Phi (z)|$ strictly increases as $z$ moves from $w_0$ to $w_1$. That is, the function
\[
 t\mapsto \log |\Phi( (1-t) w_0+ t w_1)|,
\]
is both strictly convex and  strictly increasing on $[0, 1]$.

\item[(b)]\ If in addition to $\eqref{eq:2.2bis}$ we also assume $\Phi'(w_0)\ne 0$, then we may also conclude that
\[
 t\mapsto \log |\Phi( (1-t) w_0+ t w_1)|
\]
is both strictly increasing and strictly convex on $[-1,1]$.
\end{enumerate}
\end{lemma}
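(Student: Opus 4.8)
The plan is to reduce to Lemma~\ref{L:theta} by passing to a local holomorphic logarithm of $\Phi$ near $w_0$. Since $\Phi(w_0)\ne 0$ and $\Phi$ is continuous, $\Phi$ has no zero on some disk $D(w_0,\rho_0)$, so there is a holomorphic branch $g$ of $\log \Phi$ on $D(w_0,\rho_0)$ with $\Phi = e^{g}$ and $\log|\Phi| = \mbox{Re}(g)$ there. From $g' = \Phi'/\Phi$ and $g'' = (\Phi''\Phi-(\Phi')^2)/\Phi^2$ we get
\[
A_1 := g'(w_0) = \frac{\Phi'(w_0)}{\Phi(w_0)}, \qquad A_2 := g''(w_0) = \frac{\Phi''(w_0)\Phi(w_0)-\Phi'(w_0)^2}{\Phi(w_0)^2},
\]
and \eqref{eq:2.2bis} says exactly that $A_2\ne 0$. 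Apply Lemma~\ref{L:theta} to $(A_1,A_2)$ to obtain $\theta\in\R$ with $\mbox{Re}(A_1 e^{i\theta})\ge 0$ and $\mbox{Re}(A_2 e^{2i\theta})>0$, where the first inequality is strict if $A_1\ne 0$, that is, if $\Phi'(w_0)\ne 0$.

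For $0 < r < \min\{\rho_0,\delta\}$ put $w_1 = w_1(r):= w_0 + r e^{i\theta}\in D(w_0,\delta)$ and define
\[
\varphi_r(t) := \log\bigl|\Phi\bigl((1-t)w_0 + t w_1\bigr)\bigr| = \mbox{Re}\bigl(g(w_0 + tr e^{i\theta})\bigr).
\]
The chain rule along the segment gives $\varphi_r'(t) = r\,\mbox{Re}\bigl(e^{i\theta} g'(w_0 + tr e^{i\theta})\bigr)$ and $\varphi_r''(t) = r^2\,\mbox{Re}\bigl(e^{2i\theta} g''(w_0 + tr e^{i\theta})\bigr)$. Since $w_0 + tr e^{i\theta}\to w_0$ uniformly for $t$ in a fixed bounded interval as $r\to 0^+$, and $g', g''$ are continuous at $w_0$, we have $\varphi_r'(t)/r \to \mbox{Re}(A_1 e^{i\theta})$ and $\varphi_r''(t)/r^2 \to \mbox{Re}(A_2 e^{2i\theta}) > 0$ uniformly on $[0,1]$ (and uniformly on $[-1,1]$ in the setting of part (b)).

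Thus, for part (a), choose $r$ small enough that $\varphi_r'' > 0$ on $[0,1]$; this is strict convexity. Then $\varphi_r'$ is strictly increasing on $[0,1]$ with $\varphi_r'(0) = r\,\mbox{Re}(A_1 e^{i\theta})\ge 0$, hence $\varphi_r' > 0$ on $(0,1]$, and therefore $\varphi_r(t)-\varphi_r(s) = \int_s^t \varphi_r' > 0$ whenever $0\le s < t\le 1$, i.e.\ $\varphi_r$ is strictly increasing on $[0,1]$. For part (b), where now $\mbox{Re}(A_1 e^{i\theta}) > 0$, choose instead $r$ small enough that $\varphi_r' > 0$ and $\varphi_r'' > 0$ hold simultaneously on all of $[-1,1]$, which gives strict monotonicity and strict convexity there at once.

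The computation is essentially routine once the direction $\theta$ has been chosen via Lemma~\ref{L:theta}; the only point requiring slight care is the borderline case $\Phi'(w_0)=0$ in part (a), where $\varphi_r'$ vanishes at $t=0$ and strict monotonicity on the \emph{closed} interval must be obtained from positivity of $\varphi_r'$ on the open interval together with continuity, rather than from a one-sided derivative estimate at the endpoint.
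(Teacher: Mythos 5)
Your proof is correct and follows essentially the same route as the paper: both pass to a local holomorphic logarithm of $\Phi$ near $w_0$, identify the pair $(A_1,A_2)$ with the first and second log-derivatives (equivalently, the Taylor coefficients $a_1,2a_2$ of the logarithm), invoke Lemma~\ref{L:theta} to choose the direction $\theta$, and then conclude strict convexity and strict monotonicity of $t\mapsto \mathrm{Re}\,g(w_0+tre^{i\theta})$ for small $r$ from $\mathrm{Re}(A_2 e^{2i\theta})>0$ together with the sign of $\mathrm{Re}(A_1 e^{i\theta})$. The only cosmetic difference is that the paper reads off the derivatives from the power series expansion of $\mathrm{Re}\,h$ while you compute them via the chain rule; the endgame (using $\varphi_r'(0)\ge 0$ plus $\varphi_r''>0$ to get $\varphi_r'>0$ on the open interval) is identical.
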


\begin{proof}
Reducing $\delta>0$ if necessary, we may assume that $\Phi$ is zero-free on $D(w_0, \delta)$, and thus that 
\begin{equation} \label{L1eq:0}
\Phi(z)=e^{h(z)}  \ \ \ z\in D(w_0, \delta)
\end{equation}
for some $h=h(z)$ holomorphic on $D(w_0,\delta)$. By our assumption we have
\begin{equation}   \label{L1eq:1}
h''(w_0) =\frac{\Phi''(w_0)\Phi(w_0) - \Phi'(w_0)^2 }{\Phi(w_0)^2} \ne 0,
\end{equation}
so we can write
\begin{equation} \label{eq:2.5bis}
h(z)=a_0+a_1 (z-w_0)+a_2 (z-w_0)^2 +\cdots \ \ \ \ \ (z\in D(w_0,\delta)),
\end{equation}
with $a_2\ne 0$.   By Lemma~\ref{L:theta} we may choose $\theta\in\R$ so that 
\begin{equation} \label{L1eq:2}
\mbox{Re}(a_1 e^{i\theta}) \ge 0 \ \mbox{ and } \ \mbox{Re}(a_2 e^{i 2 \theta})>0.
\end{equation}
Then the function
\begin{equation} \label{eq:2.10}
g:(-\delta, \delta)\to\R, \ g(t)=\mbox{Re}(h)(w_0+te^{i\theta})=\sum_{n=0}^\infty \mbox{Re}(a_n e^{in\theta}) t^n,
\end{equation}
is smooth and $g''(0)>0$ by our selection of $\theta$. So $g''>0$ on some $[-\rho, \rho]\subset (-\delta, \delta)$ with $\rho >0$ and since $g'(0)=\mbox{Re}(a_1 e^{i\theta })\ge 0$ there must exist $\eta$ in $(0,\rho)$ so that $g$ is strictly increasing on $[0,\eta]$ as well. Hence
\[
G:[0,1]\to \R, \ G(t):=g(\eta t)
\]
is both strictly convex and strictly increasing on $[0,1]$. But for $w_1:=w_0+\eta e^{i\theta}$ we have
\[
G(t)=\log |\Phi(w_0+ t \eta e^{i\theta})| = \log | \Phi ((1-t) w_0 + t w_1)| \ \ \ \ \ \ (t\in[0,1]),
\]
and $(a)$ holds.  To show $(b)$ we proceed similarly, noting that the new assumption $\Phi'(w_0)\ne 0$ gives that the expansion of $h$ in $\eqref{eq:2.5bis}$ also satisfies $a_1\ne 0$, and now the angle $\theta$ can be chosen so that both points $a_1 e^{i\theta}$ and $a_2 e^{i 2 \theta}$ in $\eqref{L1eq:2}$ 
have strictly positive real part, and thus
 the function $g$ in $\eqref{eq:2.10}$ has strictly positive first and second derivatives at the origin. Choosing $\rho\in (0,\delta)$ so that both $g'$ and $g''$ are strictly positive on $[-\rho, \rho]$, the conclusion $(b)$ now follows for $w_1:=w_0+ \rho e^{i\theta}$, where  $\log | \Phi ((1-t) w_0 + t w_1)| = G(t)=g(\rho t)$ for $t\in [-1,1]$ in this case.
\end{proof}   

We are now ready to show Theorem~\ref{T:2}.

\begin{proof}[Proof of Theorem~\ref{T:2}]
Let $U$, $V$, and $W$ be non-empty open subsets of $H(\C )$ with $0\in W$, and let $m\ge 2$ be given. By Lemma~\ref{L:0} it suffices to find some $(f,q)\in U\times\N$ so that
\[
\Phi(D)^q(f^m)\in V \ \ \mbox{ and } \ \ \Phi(D)^q(f^j)\in W \ (1\le j<m).
\]
Get $w$ a non-zero scalar so that $\{ w, 2w,\dots mw\} \subset \Phi^{-1}(\D)$, and chose $\delta>0$ small enough\begin{footnote}{To see why such $\delta>0$ exists: Let $g:\C^m\to \C$, $g(z_1,\dots, z_m):=\Phi(z_1+\dots+z_m)$. By our assumption, for each $s\in\{ 1,\dots, m\}$ we have
$g(\underset{s}{\underbrace{w,\dots,w}},0,\dots, 0)=\Phi(sw)\in\D$. The continuity of $g$ ensures the existence of some $\delta_s>0$ so that
\begin{equation}\label{eq:2.1}
\underset{s}{\underbrace{D(w,\delta_s)\times\cdots\times D(w,\delta_s)}}\times\underset{m-s}{\underbrace{D(0,\delta_s)\times\cdots\times D(0,\delta_s)}}\subset \Phi^{-1}(\D ).
\end{equation}
In particular, for each $w_1,\dots, w_s\in D(w,\delta_s)$ and $v_1,\dots, v_d\in D(0,\delta_s)$ with $s+d\le m$ we have
\[
\Phi(w_1+\dots+w_s+v_1+\dots+v_d)=g(w_1,\dots ,w_s, v_1,\dots ,v_d,0,\dots, 0)\in \D.
\]
Then $\delta:=\mbox{min}\{ d_s: \ 1\le s\le m\}$ does the job.
} \end{footnote} so that for each $s\in\{ 1,\dots, m\}$ and  $w_1,\dots, w_s\in D(w,\delta)$ and for each $d\in \{ 0,\dots, m-s\}$ and $v_1,\dots, v_d\in D(0,\delta)$ we have
\begin{equation}\label{eq:2.2}
w_1+\cdots+w_s+v_1+\cdots+v_d\in \Phi^{-1}(\D),
\end{equation}
where the expression $v_1+\cdots+v_d$ is defined as zero whenever $d=0$.  Now, by Lemma~\ref{L:1} there exists a non-zero scalar $w^*$ in $D(0,\delta)$ so that the map
\[
[0, w^*]\to \R, \ \ z\mapsto \log |\Phi(z)|,
\]
is strictly convex, and it increases with $|z|$ whenever $z\in [0,w_0]$, for some $w_0\in (0,w^*)$.  Now, since the sets $D(w,\delta)$ and $[\frac{w_0}{2}, w_0]$ have accumulation points in $\C$ there exist $p\in\N$,  $\alpha_1,\dots, \alpha_p\in D(w,\delta)$,  and $\beta_1,\dots, \beta_p\in [\frac{w_0}{2}, w_0]$ and scalars $a_i, b_i\in\C\setminus\{ 0 \}$,   $i\in I_p:=\{ 1,\dots, p \}$, so that
\[
(A,B):=\left(\sum_{i\in I_p} a_i e^{\alpha_i z}, \sum_{i\in I_p} b_i e^{\beta_i z} \right)\in U\times V.
\]
Next, set $R_N:=\sum_{i=1}^p c_i e^{\lambda_i z}$, where $(m\lambda_1,\dots, m\lambda_p)=(\beta_1,\dots ,\beta_p)$ and where $(c_1,\dots, c_p)=(c_1(N),\dots, c_p(N))$ is a solution of the system
\begin{equation}\label{eq:2.3}
b_i=c_i^m \Phi(m\lambda_i)^N \ \ \ (i\in I_p).
\end{equation}
Notice that for each $i\in I_p$ we have $c_i=c_i(N)\underset{N\to\infty}{\to} 0$, thanks to having $|\Phi (m\lambda_i)|=|\Phi(\beta_i)|>1$ and $b_i\ne 0$. So $f=f_N:=(A+R_N)$ belongs to $U$ whenever $N$ is large enough. 

Now, let $\mathbb{E}_p:=\{ e_1,\dots, e_p \}$ denote the canonical basis of $\C^p$, and for any $x=(x_1,\dots ,x_p)\in \N_0^p$ let $|x|:=\sum_{i\in I_p} x_i$.  Also, given $x,y\in \N_0^p$ we let $xy:=\sum_{i\in I_p} x_iy_i$. Finally,
for each $n\in \N$ we let $\mathcal{L}_n$ denote the set of tuples $(u, v)$ in $\N_0^p\times \N_0^p$ for which $|u|+|v|=n$.
Our selection of $c_i$'s and $\lambda_i$'s ensures that
\[
\begin{cases}
\Phi(D)^N(f^m)=B+ \sum_{(u,v)\in \mathcal{L}_m^*} S(u,v,N) e^{(\alpha u +\lambda v)z} \\
\Phi(D)^N(f^n)= \sum_{(u,v)\in \mathcal{L}_n} S(u,v, N) e^{(\alpha u +\lambda v)z} \ \ (1\le n <m)
\end{cases}
\]
where
\[
\mathcal{L}_m^*:=\mathcal{L}_m\setminus \{ (u,v)\in \mathcal{L}_m: \ |u|=0 \mbox{ and } v\in mE_p \}
\]
 and where for each fixed $(u, v)\in \mathcal{L}:=\mathcal{L}_m^*\cup \mathcal{L}_1 \cup\dots \cup \mathcal{L}_{m-1}$ there exists a scalar $\gamma =\gamma_{u,v}$ that does not depend\begin{footnote}{Indeed, the multinomial theorem gives
 $\gamma_{u,v}=(|u|+|v|)! \prod_{i=1}^p \frac{a_i^{u_i}}{u_i!  v_i!}$ }\end{footnote} on $N$ so that
 \begin{equation}  \label{eq:3amSF}
 S(u,v, N)= \gamma \ \Phi (\alpha u+\lambda v)^N \prod_{i\in I_p} c_i^{v_i}
 \end{equation}
Pending is to show that for each $(u,v)\in \mathcal{L}$ we have
\[
S(u,v, N)\underset{N\to\infty}{\to} 0.
\]
 Now, let $(u,v)\in \mathcal{L}$ be fixed.  By $\eqref{eq:2.3}$ and $\eqref{eq:3amSF}$ we have
 \begin{equation}
 |S(u,v,N)|=\bigo(\theta_{u,v}^N) \ \ \mbox{ as $N\to\infty$, }
 \end{equation}
 where $\theta_{u,v}=\frac{ |\Phi(\alpha u+ \lambda v)| }{ \prod_{i\in I_p} |\Phi (m\lambda_i)|^{v_i/m}}$.   So it suffices to verify that $\theta_{u,v}\in [0,1)$.   We have three cases:
 
 \begin{enumerate}
 \item[{\rm Case 1:}]\ $(u, v)\in \mathcal{L}_m^*$ with  $|u|=0$. Here $|v|=m$ and $v\notin m\mathbb{E}_p$, so $\lambda v$ is a non-trivial convex combination of $m\lambda_1, \dots, m\lambda_p$.  Hence
 \[
 \theta_{u,v} = \frac{ |\Phi(\lambda v)| }{\prod_{i\in I_p} | \Phi(m\lambda_i) |^{v_i/m}} < 1
 \]
 thanks to the map $z\mapsto \log |\Phi(z)|$ being strictly convex on $[0, w^*]$ and our selection of the $\lambda_i$'s.
 
 \item[{\rm Case 2:}] $1\le |u|\le m$ and $|v|\le m-|u|$.  By $\eqref{eq:2.2}$ we have $\alpha u+\lambda v$ in $\Phi^{-1}(\D )$. Also, since $|\Phi(0)|=1$ and since $z\mapsto \log |\Phi(z)|$ strictly increases with $|z|$ whenever $z\in [0, w_0]$, we have that $|\Phi (m\lambda_i)|>1$ for each $i\in I_p$. So $\theta_{u,v}\in [0,1)$.

 \item[{\rm Case 3:}]\  $|u|=0$ and $1\le |v|\le m-1$.    Notice that in this case
 \[
 \alpha u+ \lambda v = 0 (1-\frac{|v|}{m}) + m\lambda_1\frac{v_1}{m}+\dots + m\lambda_p \frac{v_p}{m}
 \]
 is a non-trivial convex combination of the points $0, m\lambda_1,\dots ,m\lambda_p$ lying on $[0, w^*]$. So the strict convexity of $z\mapsto \log|\Phi(z)|$ on $[0, w^*]$ and the fact that $|\Phi(0)|=1$ give
 \[
  \begin{aligned}
 |\Phi(\alpha u+\lambda v)|&< |\Phi(0)|^{1-\frac{|v|}{m}} \prod_{i\in I_p} |\Phi(m\lambda_i)|^{v_i/m}\\
 & = \prod_{i\in I_p} |\Phi(m\lambda_i)|^{v_i/m}, 
  \end{aligned}
 \]
 and thus that $\theta_{u,v}\in [0,1)$. 
 \end{enumerate}
The proof of Theorem~\ref{T:2}. is now complete.                                                          
\end{proof}

\section{Proofs of Theorem~\ref{T:ma^{m-1}} and Theorem~\ref{T:Ts}.}   \label{S:T:ma^{m-1}}

We use the following result by Bayart to show Theorem~\ref{T:ma^{m-1}}. 
\begin{theorem} {\bf (Bayart)} \cite[pp 3452--3454]{bayart} \label{T:bayart}
Let $\Phi$ be entire and of exponential type. Suppose that for each $m\ge 2$ there exist scalars $z_0$  and $w_0$ with $z_0\in (0, w_0)$ so that
\begin{enumerate}
\item[{\rm (a)}]\  $|\Phi (w_0)|>1$ and  $|\Phi|<1$ on $(0,z_0]$, 

\item[{\rm (b)}]\ $|\Phi(dw_0)| < |\Phi(w_0)|^d$ for each $d\in\{ 2,3,\dots, m\}$, and

\item[{\rm (c)}]\ The function $t\mapsto |\Phi (w_0+tz_0)|$ is strictly increasing on $[0, \eta)$, for some $\eta>0$.
\end{enumerate}

Then $\Phi(D)$ supports a hypercyclic algebra on $H(\mathbb{C})$.
\end{theorem}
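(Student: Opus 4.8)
The natural approach is, exactly as for Theorem~\ref{T:2}, to verify the sufficient condition of Bayart and Matheron, Lemma~\ref{L:0}; this in fact yields the stronger statement that the \emph{generic} $f\in H(\C)$ generates a hypercyclic algebra for $\Phi(D)$. So I would fix non-empty open sets $U,V,W\subseteq H(\C)$ with $0\in W$, fix $m\ge 2$, use the hypothesis to obtain $z_0\in(0,w_0)$ satisfying (a)--(c), and look for $f=f_N=A+R_N\in U$ of the same shape as in Section~\ref{S:T:2}, namely $A=\sum_{i=1}^{p}a_i e^{\alpha_i z}$ and $R_N=\sum_{i=1}^{p}c_i(N)e^{\lambda_i z}$, where: the $\alpha_i$ lie in a tiny neighbourhood of a suitably small point of the segment $(0,z_0)$, with the neighbourhood shrunk so that every sum of at most $m$ of the $\alpha_i$ still lies in $\{\,|\Phi|<1\,\}$ (possible by (a)); and $m\lambda_i=\beta_i$ ranges over a small set with an accumulation point, placed on the segment through $w_0$ along which (a) and (c) guarantee $|\Phi|>1$ and $|\Phi|$ strictly increasing. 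One prescribes $B=\sum_{i=1}^{p}b_ie^{\beta_iz}\in V$ (available since such exponential sums are dense in $H(\C)$) and solves $b_i=c_i(N)^m\Phi(\beta_i)^N$; as $|\Phi(\beta_i)|>1$, this forces $c_i(N)\to 0$, so $f_N\to A$ and $f_N\in U$ for $N$ large. The essential difference with Theorem~\ref{T:2} is that Lemma~\ref{L:1} is \emph{not} available here — we do not assume $\Phi''(w_0)\Phi(w_0)\neq\Phi'(w_0)^2$ — so the convexity-type information that was produced there near the origin must here be supplied, near $w_0$, by the a priori hypothesis (c), while (b) takes over the role played by the arithmetic-progression hypothesis of Theorem~\ref{T:2}.

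One then argues verbatim as in Section~\ref{S:T:2}. Expanding the multinomial, for each $j\le m$,
\[
\Phi(D)^N(f_N^{\,j})=B_j+\sum_{(u,v)}S(u,v,N)\,e^{(\alpha u+\lambda v)z},
\]
where $B_j=B$ if $j=m$ and $B_j=0$ otherwise, the sum runs over $(u,v)\in\N_0^p\times\N_0^p$ with $|u|+|v|=j$ (minus the tuples with $u=0$ and $v$ concentrated on a single coordinate, when $j=m$), and $|S(u,v,N)|=\bigo(\theta_{u,v}^{\,N})$ with
\[
\theta_{u,v}=\frac{|\Phi(\alpha u+\lambda v)|}{\prod_{i}|\Phi(\beta_i)|^{v_i/m}}.
\]
It then remains to check $\theta_{u,v}<1$ for every such tuple, and this splits into the same three kinds of terms as in the proof of Theorem~\ref{T:2}, each governed by one of the hypotheses: if some $\alpha_i$-factor occurs ($|u|\ge 1$), the numerator is $<1$ (when $v=0$, because $\alpha u\in\{\,|\Phi|<1\,\}$ by (a)), the denominator being $\ge 1$, while the contribution to the frequency coming from the peak cluster is again tamed via (b) and (a); if $u=0$ and $v$ is concentrated on one coordinate, the frequency is essentially a dilate of $w_0$ and the strict submultiplicativity $|\Phi(dw_0)|<|\Phi(w_0)|^{d}$ of (b) gives $\theta_{u,v}<1$; and if $u=0$, $|v|=m$ and $v$ is not concentrated, then $\lambda v=\sum_i\frac{v_i}{m}\beta_i$ is a genuine convex combination of the $\beta_i$ on the segment through $w_0$, so the strict monotonicity of (c) keeps $|\Phi(\lambda v)|$ strictly below the weighted geometric mean $\prod_i|\Phi(\beta_i)|^{v_i/m}$. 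Granting this, every $S(u,v,N)\to 0$, hence $\Phi(D)^N(f_N^{\,j})\to 0\in W$ for $j<m$ and $\Phi(D)^N(f_N^{\,m})\to B\in V$, and Lemma~\ref{L:0} with $q=N$ large finishes the argument.

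The combinatorics and the ``$\alpha_i$-factor'' estimates are routine, just as in Theorem~\ref{T:2}. The genuine obstacle is the \emph{simultaneous} calibration demanded by (a)--(c): one must position the two clusters of frequencies and choose their scales so that \emph{every} error frequency $\alpha u+\lambda v$ produced for $1\le|u|+|v|\le m$ falls into one of the controlled zones, with a \emph{strict} inequality in each — in particular all the intermediate dilates of $w_0$ up to order $m$ must be handled at once by (b), and the convex-combination comparisons near $w_0$ by the bare monotonicity of (c) — while at the same time keeping both the set of attainable $A$'s and the set of attainable $B$'s dense in $H(\C)$. Extracting from the monotonicity in (c) the strict weighted-geometric-mean estimate needed in the last case is the subtle point; this is precisely what is carried out in \cite[pp.~3452--3454]{bayart}, which one would reproduce here (or simply quote).
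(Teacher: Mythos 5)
Your proposal correctly identifies the target (apply Lemma~\ref{L:0}), but the frequency placement you propose is a direct transposition of the scheme used for Theorem~\ref{T:2}, and that scheme does not work under the weaker hypotheses of Theorem~\ref{T:bayart}. The crux is the case $u=0$, $|v|=m$, $v$ not concentrated on a single coordinate, where you want
\[
|\Phi(\lambda\cdot v)|<\prod_{i}|\Phi(\beta_i)|^{v_i/m},\qquad \lambda\cdot v=\textstyle\sum_i\frac{v_i}{m}\beta_i .
\]
This is exactly the statement that $\log|\Phi|$ is strictly \emph{convex} along the segment near $w_0$. Hypothesis (c) gives only that $t\mapsto|\Phi(w_0+tz_0)|$ is strictly \emph{increasing} on a short interval, and monotonicity does not imply the geometric-mean inequality you need; if $\log|\Phi|$ happens to be concave there, the inequality reverses and $\theta_{u,v}\ge 1$. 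In Theorem~\ref{T:2} this estimate is legitimate because Lemma~\ref{L:1} (via the hypothesis $\Phi''(0)\Phi(0)\ne\Phi'(0)^2$) actually furnishes strict log-convexity on the relevant segment; here that hypothesis is absent by design, so the estimate is unavailable. A second, related gap: with $\beta_i=m\lambda_i$ near $w_0$, the error frequencies $\alpha u+\lambda v$ with $1\le|v|=k\le m-1$ sit near $kw_0/m$, and nothing in (a), (b), (c) controls $|\Phi|$ at those non-integer dilates of $w_0$; (a) only covers $(0,z_0]$, and (b) only integer multiples $dw_0$.

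Bayart's proof (reproduced in the Appendix) sidesteps both problems by a different placement of exponents and, crucially, a different source for the target term. One takes $A=\sum_j a_j e^{\gamma_j z}$ with the $\gamma_j$ clustered very near $0$ inside $(0,z_0)$ and with $\gamma_1$ fixed, $R_N=\sum_j c_j e^{\lambda_j z}$ with the $\lambda_j$ clustered near $w_0$, and the target $B=\sum_j b_j e^{(\lambda_j+(m-1)\gamma_1)z}$. The target frequency $\lambda_j+(m-1)\gamma_1$ arises from choosing one factor from $R_N$ and $(m-1)$ copies of the $a_1e^{\gamma_1 z}$ summand of $A$; accordingly $c_j$ is determined \emph{linearly} via $b_j=m\,a_1^{m-1}c_j\,\Phi(\lambda_j+(m-1)\gamma_1)^N$, not via $c_j^m$. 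With this geometry, the error terms fall into exactly three zones each controlled by one hypothesis: if $|v|=0$ the frequency lies in $(0,z_0]$ and (a) applies; if $|v|=1$ the frequency is $\lambda_k+t\gamma_1$ with $t<m-1$ and the strict \emph{monotonicity} of (c) alone gives $|\Phi|$ strictly below $|\Phi(\lambda_k+(m-1)\gamma_1)|$; and if $|v|=d\ge 2$ the frequency is near $dw_0$ and (b) (stabilised to a small neighbourhood) gives the bound. No convexity at $w_0$ is ever invoked, and no non-integer dilate of $w_0$ ever appears as an error frequency. This is the structural idea your proposal is missing; without it, the argument you sketch cannot be completed from (a)--(c) alone.
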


We also use the following elementary fact: 
\begin{remark} \label{R:1}
Given $f:[a,b]\to \R$ continuously differentiable and $y\in (f(a), f(b))$, there exists $c\in (a,b)$ so that $y<f$ on $[c, b]$ and $f'(c)>0$.
\end{remark}

\begin{proof}[Proof of Theorem~\ref{T:ma^{m-1}}]
Fix $m\ge 2$. By  (i) and Theorem~\ref{T:bayart}, it suffices to show that there exists some $w_0\in \{ tz_0: t>1\}$ so that
\begin{enumerate}
\item[(a')]\  $|\Phi (w_0)|>1$,  

\item[(b')]\ $|\Phi(dw_0)| < |\Phi(w_0)|^d$ for each $d\in\{ 2,3,\dots, m\}$, and

\item[(c')]\ The function $t\mapsto |\Phi (w_0+tz_0)|$ is strictly increasing on an interval\footnote{While we only need $[0,\eta)$ in (c') for this proof, we later use in the proof of Theorem~\ref{T:7bisbis} this stronger conclusion of having $(-\eta, \eta)$ in (c').}
$(-\eta, \eta)$, for some $\eta>0$.
\end{enumerate}
By means of contradiction, suppose no such $w_0$ satisfies $(a')-(c')$.
Let $t^*>1$ so that $z_1=t^*z_0$. Also, consider the smooth function $\psi:\R\to\R$, $\psi(t)=|\Phi(tz_0)|^2$. By $(ii)$ we may pick $0<\epsilon <\psi(t^*)-\mbox{max}\{ 1, e^{2\tau_0 |z_1|} \}$. Also, let $\frac{1}{4} > \epsilon_1>\epsilon_2>\dots >0$ so that
\[
\prod_{k=1}^\infty (1-\epsilon_k) > 1-\epsilon .
\]
By $(ii)$ we have 
\[
\psi(1)< 1 \le \mbox{max}\{ 1, e^{2\tau_0 |z_1|} \} < \psi (t^*).
\]
So by Remark~\ref{R:1} there exists $t_0\in (1, t^*)$ so that
\begin{equation}\label{eq:zero}
\begin{aligned}
\mbox{max}\{ 1, e^{2\tau_0 |z_1|}\} < \psi(t^*) -\epsilon &<\psi(t_0). \\
0&<\psi'(t_0).
\end{aligned}
\end{equation}
In particular, $1<\psi(t_0)=|\Phi(t_0z_0)|^2$ and $\psi$ is strictly increasing on $(t_0-\eta, t_0+\eta)$ for some $\eta>0$; in other words the function $t\mapsto |\Phi((t+t_0)z_0)| = |\Phi(t_0z_0+tz_0)|$ is strictly increasing on $(-\eta,\eta)$.  That is,  $w_0:=t_0z_0$ satisfies $(a)$ and $(c)$ and hence it must fail $(b)$ so for some $d_1\in \{ 2,\dots, m \}$ we have 
\[
\psi(d_1t_0) \ge \psi(t_0)^{d_1} > \psi(t_0)^{d_1 (1-\epsilon_1)} >\psi(t_0).
\]
By Remark~\ref{R:1}, there exists $t_1\in (t_0, d_1t_0)$ so that
\begin{equation}\label{eq:one}
\begin{aligned}
\psi(t_0)^{d_1 (1-\epsilon_1)} &< \psi(t_1) \\
0&< \psi'(t_1).
\end{aligned}
\end{equation}
Let $r_1\in (1,d_1)$ so that $t_1=r_1t_0$. As before $w_0:=t_1z_0$ satisfies $(a)$ and $(c)$, so there exists $d_2\in \{ 2,\dots, m \}$ so that
\[
\psi(d_2t_1) \ge \psi(t_1)^{d_2} > \psi(t_1)^{d_2 (1-\epsilon_2)} >\psi(t_0)^{d_1(1-\epsilon_1) d_2 (1-\epsilon_2)} > \psi(t_0).
\]
By Remark~\ref{R:1} there exists $t_2\in (t_1, d_2t_1)$ so that 
\begin{equation}\label{eq:two}
\begin{aligned}
\psi(t_1)^{d_2 (1-\epsilon_2)} &< \psi(t_2) \\
0&< \psi'(t_2).
\end{aligned}
\end{equation}
We let $r_2\in (1, d_2)$ so that $t_2=r_2t_1$, and note again that $w_0:=t_2z_0$ satisfies $(a)$ and $(c)$ to get $d_3\in \{ 2,\dots, m\}$ with 
\[
\psi(d_3t_2)\ge \psi(t_2)^{d_3}> \psi(t_2)^{d_3 (1-\epsilon_3)} >\psi(t_2).
\]
Continuing this process inductively, we obtain sequences $(t_n)_{n=1}^\infty$, $(d_n)_{n=1}^\infty$, and $(r_n)_{n=1}^\infty$ so that for each $n\ge 1$ we have
\begin{equation} \label{eq:astast}
\begin{aligned}
d_n&\in \{ 2,\dots, m \} \\
 r_n&\in (1, d_n) \\
t_n&=r_n t_{n-1} \\    
\psi(t_n)&> \psi(t_{n-1})^{d_n (1-\epsilon_n)} > \psi(t_0)^{(3/2)^n}\\
 \psi'(t_n)&>0.
\end{aligned}
\end{equation}
In particular, $\psi(t_n)\to \infty$ since $\psi(t_0)>1$, and thus $t_n{\to}\infty$. Also, by $\eqref{eq:astast}$ we have  $t_n=r_nt_{n-1}=\dots =(r_nr_{n-1}\cdots r_1)t_0$ and
\[
d_1\cdots d_n \prod_{k=1}^n(1-\epsilon_k)  >  d_1\cdots d_n(1-\epsilon)  >  r_1\cdots r_n(1-\epsilon)  = (1-\epsilon) \frac{t_n}{t_0},
\]
giving
\[
\begin{aligned}
\psi(t_n)&> \psi(t_{n-1})^{d_n(1-\epsilon_n)}\\
&> \psi(t_{n-2})^{d_n(1-\epsilon_n) d_{n-1} (1-\epsilon_{n-1})} \\
&\ \vdots \\
&>\psi(t_0)^{ d_1\cdots d_n \prod_{k=1}^n(1-\epsilon_k) } \\
&> \psi(t_0)^{(1-\epsilon)\frac{t_n}{t_0}}.
\end{aligned}
\]
So for each $n\ge 1$
\[
\begin{aligned}
|\Phi(t_nz_0)| & > |\Phi(t_0z_0)|^{(1-\epsilon)\frac{t_n}{t_0}} \\
&= e^{(1-\epsilon)\log|\Phi(t_0z_0)| \frac{t_n}{t_0}}\\
&> e^{(1-\epsilon)\log (( |\Phi(t^*z_0)|^2-\epsilon )^\frac{1}{2}) \frac{t_n}{t_0}     \ \ \mbox{(by }\eqref{eq:zero})}\\
&> e^{(1-\epsilon)\frac{\log (( |\Phi(z_1)|^2-\epsilon )^\frac{1}{2})}{|z_1|}    t_n|z_0| }\\
&= e^{(1-\epsilon)\frac{\log (( |\Phi(t^*z_0)|^2-\epsilon )^\frac{1}{2})}{t^*|z_0|}    t_n|z_0| }.
\end{aligned}
\]
Hence for each small $\epsilon>0$ we have
\[
\tau_0\ge (1-\epsilon) \frac{\log( (|\Phi(z_1)|^2-\epsilon)^\frac{1}{2})}{|z_1|}
\]
forcing $\tau_0 \ge \frac{\log|\Phi(z_1)|}{|z_1|}$, contradicting our assumption $\eqref{eq:tau_0}$.
\end{proof}

\begin{corollary} \label{C:7.1}
Let $\Phi(z)=e^{az} \varphi (z)$, where $\varphi$ is non-constant of growth $(\frac{1}{2}, 0)$ with $|\varphi(0)|=1$, and 
$\mbox{Im}\left( a\, \varphi(0)\, {\overline{\varphi'(0)}} \right)\ne 0$.
Then $\Phi(D)$ supports a hypercyclic algebra. 
\end{corollary}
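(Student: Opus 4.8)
The plan is to apply Theorem~\ref{T:ma^{m-1}}, in the equivalent form given in Remark~\ref{R:T:ma^{m-1}}(b). By the Le\'on--M\"uller theorem~\cite{LeoMul06} we may replace $\Phi$ by $\overline{\varphi(0)}\,\Phi=e^{az}\bigl(\overline{\varphi(0)}\varphi\bigr)$, which has the same hypercyclic vectors, so we may assume $\varphi(0)=1$. Then $\Phi(0)=1$, $\Phi$ is of exponential type (since $\varphi$, being of growth $(1/2,0)$, is of subexponential growth), and the hypothesis becomes $\mbox{Im}\bigl(a\,\overline{\varphi'(0)}\bigr)\ne 0$; equivalently, $a$ and $\varphi'(0)$ are non-zero and $\R$-linearly independent. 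In particular $a+\varphi'(0)\ne 0$ and $a+\varphi'(0)$ is not a real multiple of $a$, so the arguments of $a$ and of $a+\varphi'(0)$ differ by an amount that is not a multiple of $\pi$.

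Hence the two open arcs $\{\theta:\ \mbox{Re}(ae^{i\theta})>0\}$ and $\{\theta:\ \mbox{Re}((a+\varphi'(0))e^{i\theta})<0\}$, each of length $\pi$, are not complementary, so their intersection $\mathcal A$ is a non-empty open arc, say of opening $\beta_0\in(0,\pi)$. Fix $\theta\in\mathcal A$ and abbreviate $c=\mbox{Re}(ae^{i\theta})>0$. Since $\varphi(0)=1$, near the origin $\log|\Phi(te^{i\theta})|=ct+\log|\varphi(te^{i\theta})|=t\,\mbox{Re}\bigl((a+\varphi'(0))e^{i\theta}\bigr)+\bigo(t^2)$, which is negative for $t$ in some interval $(0,r_\theta]$; thus $\Phi$ satisfies condition~(i) of Remark~\ref{R:T:ma^{m-1}}(b) along the ray $\theta$. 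Moreover, since $\varphi$ is of subexponential growth, $\limsup_{t\to\infty}t^{-1}\log|\varphi(te^{i\theta})|\le 0$, so the Phragm\'en--Lindel\"of indicator of $\Phi$ satisfies $h(\theta)\le c$. Consequently, once we exhibit some $\theta_0\in\mathcal A$ and some $R_0>0$ with $|\varphi(R_0e^{i\theta_0})|>1$, we are done: writing $c_0=\mbox{Re}(ae^{i\theta_0})>0$, condition~(i) gives $|\varphi(te^{i\theta_0})|=e^{-c_0t}|\Phi(te^{i\theta_0})|<1$ for $0<t\le r_{\theta_0}$, so $R_0>r_{\theta_0}$; and $|\Phi(R_0e^{i\theta_0})|=e^{c_0R_0}|\varphi(R_0e^{i\theta_0})|>e^{c_0R_0}\ge\max\{1,e^{h(\theta_0)R_0}\}$, which is condition~(ii). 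Thus $z_0:=r_{\theta_0}e^{i\theta_0}$ and $z_1:=R_0e^{i\theta_0}$ satisfy the hypotheses of Theorem~\ref{T:ma^{m-1}}.

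It remains to produce such a $\theta_0$, and this is where I expect the real work to be. Suppose, toward a contradiction, that $|\varphi(te^{i\theta})|\le 1$ for every $t>0$ and every $\theta\in\mathcal A$. Together with $|\varphi(0)|=1$ this says $|\varphi|\le 1$ on the closed sector $\Sigma=\{te^{i\theta}:t\ge 0,\ \theta\in\overline{\mathcal A}\}$, whose opening is $\beta_0>0$. But $\varphi$ is of growth $(1/2,0)$, hence of order at most $\tfrac12<\pi/(2\pi-\beta_0)$; so Phragm\'en--Lindel\"of applied on the complementary sector, of opening $2\pi-\beta_0<2\pi$ and with $|\varphi|\le 1$ on its two bounding rays, forces $|\varphi|\le 1$ there as well, hence on all of $\C$. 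Then $\varphi$ is constant by Liouville's theorem, contradicting the hypothesis that $\varphi$ is non-constant. This yields the desired $\theta_0$ and $R_0$, completing the argument.

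The only genuinely delicate point is this last Phragm\'en--Lindel\"of argument: it has to be run on the \emph{large} sector of opening $2\pi-\beta_0$ (just below $2\pi$), and it works precisely because a function of growth $(1/2,0)$ has order strictly below the critical exponent $\pi/(2\pi-\beta_0)>\tfrac12$ of that sector. Everything else --- the Le\'on--M\"uller normalization, the elementary check that $\mathcal A$ is non-empty, and the estimates of $\log|\Phi|$ near $0$ and of $h(\theta)$ at infinity --- is routine.
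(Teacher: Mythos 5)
Your proof is correct, but it takes a genuinely different route from the paper's. The paper first applies Lemma~\ref{L:CD} with the homothety $z\mapsto z/a$ to reduce to $a=1$, i.e. to $\Phi_{a^{-1}}(z)=e^z\varphi(z/a)$; the payoff is that on the \emph{imaginary axis} the exponential factor has unit modulus, so $|\Phi_{a^{-1}}(it)|=|\varphi(it/a)|$ identically. The hypothesis $\mbox{Im}(a\varphi(0)\overline{\varphi'(0)})\ne 0$ is then exactly the statement that $g(t)=|\Phi_{a^{-1}}(it)|^2$ has $g'(0)\ne 0$, giving condition~(i) of Theorem~\ref{T:ma^{m-1}} along $\pm i$, while $\tau_0=0$ along that ray because $\varphi$ is of subexponential growth; condition~(ii) then follows by simply citing \cite[Theorem~3.1.5]{Boas} (functions of growth $(1/2,0)$ are unbounded on every ray). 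You instead normalize $\varphi(0)=1$ via Le\'on--M\"uller, carve out the open arc $\mathcal A$ of good directions by a hand-made geometric argument, and, rather than quote the Boas unboundedness theorem, re-derive the needed ``$\varphi$ is large somewhere in $\mathcal A$'' from scratch via Phragm\'en--Lindel\"of on the complementary sector (which works because its critical exponent $\pi/(2\pi-\beta_0)$ exceeds $\tfrac12$) plus Liouville. Both arguments rest on the same underlying fact (order $\le \tfrac12$ prevents boundedness on a sector of small opening); the paper's reduction to the imaginary axis is slicker and shorter, while yours is more self-contained and avoids Lemma~\ref{L:CD} entirely. One small remark: the quantity you really need for $\mathcal A\ne\emptyset$ is that $a+\varphi'(0)$ is not a \emph{positive} real multiple of $a$; you prove the stronger non-real-multiple statement, which is fine, just slightly more than required.
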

To show this we recall the following fact.

\begin{lemma}  \label{L:CD}   \cite[Lemma~5]{BCP2018}
Let $\Phi \in H(\C )$ be of exponential type, and consider the composition operator $C_\varphi :H(\C )\to H(\C )$, $f\mapsto f\circ\varphi$, where $\varphi :\C \to \C$, $\varphi (z)=a z$, is a homothety on the plane with $0\ne a\in\C$. Then  $\Phi_a :=C_\varphi (\Phi)$ is of exponential type and 
\[
C_\varphi (HC(\Phi_a (D)))=HC (\Phi (D)).
\]
In particular, the algebra isomorphism $C_\varphi :H(\C )\to H(\C )$ maps hypercyclic algebras of $\Phi_a (D)$ onto hypercyclic algebras of $\Phi (D)$.
\end{lemma}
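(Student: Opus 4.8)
The plan is to realize $\Phi_a(D)$ and $\Phi(D)$ as conjugate operators via $C_\varphi$, and to read off the statement from the fact that a conjugacy by an algebra isomorphism preserves hypercyclicity and carries subalgebras to subalgebras. First I would check that $\Phi_a$ is again of exponential type: writing $\Phi(z)=\sum_{n\ge 0}a_n z^n$ with $|a_n|\le C R^n$ for some $C,R\in(1,\infty)$ (as recalled in Subsection~\ref{Ss:Notation}), the Taylor coefficients of $\Phi_a(z)=\Phi(az)$ are $a_n a^n$, and $|a_n a^n|\le C(R|a|)^n$, which is of the required exponential-type form after enlarging the constants if necessary; hence $\Phi_a(D):H(\C)\to H(\C)$ is a well-defined operator. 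Next I would record that $C_\varphi$ is an algebra automorphism of $H(\C)$ which is moreover a homeomorphism for the compact-open topology: it is linear, it is multiplicative since $(fg)\circ\varphi=(f\circ\varphi)(g\circ\varphi)$, it is invertible with inverse $C_{\varphi^{-1}}$ where $\varphi^{-1}(z)=a^{-1}z$, and both $C_\varphi$ and $C_{\varphi^{-1}}$ are continuous because $\varphi$ and $\varphi^{-1}$ map compact subsets of $\C$ to compact subsets of $\C$.

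The heart of the argument is the intertwining identity $C_\varphi\,\Phi_a(D)=\Phi(D)\,C_\varphi$ on $H(\C)$. The chain rule gives $(C_\varphi f)'=a\,C_\varphi(f')$, i.e.\ $DC_\varphi=a\,C_\varphi D$, and hence, by induction, $C_\varphi D^n=a^{-n}D^n C_\varphi$ for every $n\in\N_0$. Applying the continuous operator $C_\varphi$ termwise to the series $\Phi_a(D)f=\sum_{n\ge 0}a_n a^n D^n f$, which converges in $H(\C)$, yields
\[
C_\varphi\bigl(\Phi_a(D)f\bigr)=\sum_{n\ge 0}a_n a^n\,C_\varphi(D^n f)=\sum_{n\ge 0}a_n a^n a^{-n}\,D^n(C_\varphi f)=\sum_{n\ge 0}a_n D^n(C_\varphi f)=\Phi(D)(C_\varphi f).
\]

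Iterating the intertwining identity gives $C_\varphi\,\Phi_a(D)^n=\Phi(D)^n\,C_\varphi$ for all $n\in\N_0$, so for every $f\in H(\C)$ the orbit of $C_\varphi f$ under $\Phi(D)$ is exactly the $C_\varphi$-image of the orbit of $f$ under $\Phi_a(D)$; since $C_\varphi$ is a homeomorphism, one orbit is dense in $H(\C)$ if and only if the other is, which is precisely $C_\varphi(HC(\Phi_a(D)))=HC(\Phi(D))$. For the final assertion, since $C_\varphi$ is an algebra automorphism it carries subalgebras of $H(\C)$ bijectively onto subalgebras of $H(\C)$; combined with the previous equality, a subalgebra contained in $HC(\Phi_a(D))\cup\{0\}$ is mapped onto a subalgebra contained in $HC(\Phi(D))\cup\{0\}$, and conversely, which is the claimed correspondence of hypercyclic algebras. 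The only step that needs a word of justification — and the nearest thing to an obstacle in an otherwise routine argument — is the termwise passage of $C_\varphi$ through the defining series of $\Phi_a(D)$, which is legitimate because $C_\varphi$ is continuous and both $\sum_n a_n a^n D^n f$ and $\sum_n a_n D^n(C_\varphi f)$ converge in the compact-open topology, the latter because $\Phi(D)$ is well defined.
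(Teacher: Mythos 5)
Your proposal is correct: the intertwining identity $C_\varphi\,\Phi_a(D)=\Phi(D)\,C_\varphi$, the continuity and multiplicativity of $C_\varphi$, and the exponential-type estimate on the coefficients $a_na^n$ together give exactly the stated conjugacy of hypercyclic vectors and algebras. The paper itself offers no proof but cites \cite[Lemma~5]{BCP2018}, and the argument there is precisely this quasi-conjugacy via the algebra automorphism $C_\varphi$, so your route is essentially the same as the source's.
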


\begin{proof}[Proof of Corollary~\ref{C:7.1}]
By Lemma~\ref{L:CD} the convolution operator $\Phi(D)$ has the same set of hypercyclic vectors as the one induced by 
 \[
 \Phi_{a^{-1}}(z)=\Phi(\frac{z}{a}) = e^z \varphi_{a^{-1}}(z),
 \]
 where $\varphi_{a^{-1}}(z)=\varphi (\frac{z}{a} )$.    Notice 
  that $\varphi_{a^{-1}}$ is also of subexponential growth and that it is of growth $(\frac{1}{2}, 0)$ whenever $\varphi$ is, and that $|\Phi_{a^{-1}}|=|\varphi_{a^{-1}}|$ on the imaginary axis. It suffices to show $\Phi_{a^{-1}}$ satisfies the assumptions of Theorem~\ref{T:ma^{m-1}}. Letting
$g(t):=|\Phi_{a^{-1}}(it)|^2=|\varphi (\frac{it}{a})|^2$, we have $
g'(0)
=\frac{2}{|a|^2} \mbox{Im}\left( a\, \varphi(0) \overline{ \varphi'(0)} \right)  \ne 0
$
and thus there exists some $\eta>0$ so that
$\Phi_{a^{-1}}$ has modulus smaller than one on $(0, \eta i)$ if $g'(0)<0$ or on $(-\eta i, 0)$ if $g'(0)>0$.  Also
$\varphi_{a^{-1}}$  is unbounded on any half line, since it is of growth $(\frac{1}{2}, 0)$ \cite[Theorem 3.1.5]{Boas}. 
\end{proof}

\begin{corollary} \label{C:7.4}
Let $\Phi (z) = e^{az}  \prod_{n=1}^\infty (1-\frac{z}{z_n})$, where $a$ and $z_n$ $(n\in\N)$ are non-zero complex scalars satisfying
\begin{enumerate}
\item[{\rm (i)}]\ $\sum_{n=1}^\infty \frac{1}{|z_n|} <\infty$.
\item[{\rm (ii)}]\  $\sum_{n=1}^\infty \, \frac{1}{z_n^2} \ne 0$. 
\end{enumerate}
Then $\Phi(D)$ has a hypercyclic algebra.
\end{corollary}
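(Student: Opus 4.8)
The plan is to verify the hypotheses of Theorem~\ref{T:2} for $\Phi(z)=e^{az}P(z)$, where $P(z):=\prod_{n=1}^\infty\bigl(1-\tfrac{z}{z_n}\bigr)$. Since $\sum_n|z_n|^{-1}<\infty$ the canonical product $P$ converges locally uniformly to an entire function of genus $0$, so $\Phi$ is of exponential type and $\Phi(0)=1$. For the requirement $\Phi''(0)\Phi(0)\ne\Phi'(0)^2$ I would pass to the logarithmic derivative near the origin: $(\log\Phi)'(z)=a-\sum_n(z_n-z)^{-1}$, hence $(\log\Phi)''(z)=-\sum_n(z_n-z)^{-2}$, the term-by-term differentiation being legitimate by local uniform convergence (and $\sum_n|z_n|^{-2}<\infty$ since $|z_n|\to\infty$). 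Evaluating at $0$ and using $\Phi(0)=1$ gives $\Phi''(0)\Phi(0)-\Phi'(0)^2=(\log\Phi)''(0)=-\sum_{n=1}^\infty z_n^{-2}$, which is nonzero by hypothesis (ii).

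It remains to produce, for each $m\in\N$, a nonzero scalar $w$ with $\{jw\}_{j=1}^m\subset\Phi^{-1}(\D)$; in fact a single $w$ will work for all $m$ simultaneously. The key point is that the genus-$0$ canonical product $P$ is of subexponential growth, so that $\log M_P(r)/r\to0$ as $r\to\infty$, where $M_P(r)=\sup_{|z|\le r}|P(z)|$; this is classical (see~\cite{Boas}), and also follows directly from
\[
\log M_P(r)\ \le\ \sum_{n=1}^\infty\log\!\left(1+\frac{r}{|z_n|}\right)\ =\ r\int_0^\infty\frac{n(t)}{t(t+r)}\,dt ,
\]
whose right-hand side is $o(r)$ by dominated convergence, since $\int_0^\infty n(t)\,t^{-2}\,dt=\sum_n|z_n|^{-1}<\infty$. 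Granting this, pick $N_0$ so large that $\log M_P(r)<\tfrac12|a|\,r$ for every $r\ge N_0$, fix any $t\ge N_0/|a|$, and set $w:=-\overline{a}\,t$. Then for every $j\ge1$ we have $|jw|=j|a|t\ge N_0$ and $\mbox{Re}(a\,jw)=-j|a|^2t$, whence
\[
\log|\Phi(jw)|\ =\ \mbox{Re}(a\,jw)+\log|P(jw)|\ <\ -j|a|^2t+\tfrac12 j|a|^2t\ <\ 0 ,
\]
so $\{jw\}_{j=1}^\infty\subset\Phi^{-1}(\D)$. In particular $\Phi^{-1}(\D)$ contains arbitrarily long arithmetic progressions of the form $\{jw\}_{j=1}^m$ with $w\ne0$, and Theorem~\ref{T:2} produces a hypercyclic algebra for $\Phi(D)$.

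The only step that is not a routine verification is the subexponential growth of the genus-$0$ canonical product; once that is available, the assumption $a\ne0$ makes the arithmetic progressions automatic, by steering $w$ into the direction along which $e^{az}$ decays fastest. If preferred, one may first invoke Lemma~\ref{L:CD} to normalize $a=1$, though this is not necessary.
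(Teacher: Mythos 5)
Your proposal is correct and follows essentially the same route as the paper. You verify that $P$ is of genus zero and subexponential growth so that $\Phi$ is of exponential type, compute $\Phi''(0)\Phi(0)-\Phi'(0)^2=(\log\Phi)''(0)=-\sum_n z_n^{-2}\ne0$ via the logarithmic derivative (the paper does the same computation, stated slightly differently via $P'(0),P''(0)$), and produce the arithmetic progressions in $\Phi^{-1}(\D)$ by moving along the ray $-\overline a\,\R_{>0}$ on which $|e^{az}|$ decays exponentially while $|P|$ grows subexponentially; the paper first normalizes $a=1$ via Lemma~\ref{L:CD} and then moves along $(-\infty,0)$, which is the same idea with one extra reduction step. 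Your explicit bound $\log M_P(r)=r\int_0^\infty n(t)/(t(t+r))\,dt=o(r)$ is a nice self-contained justification of the subexponential growth that the paper simply cites from~\cite{Boas}, but the underlying argument is identical.
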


\begin{proof}
Condition $(i)$ ensures that $P(z):=\prod_{n=1}^\infty (1-\frac{z}{z_n})$ converges absolutely and locally uniformly on $\C$, and that $P$ is of growth $(1,0)$ \cite[Lemma 2.10.13]{Boas}.  Hence $\Phi$ is non-constant of exponential type.
By Lemma~\ref{L:CD}, replacing $(z_n)$ by $(a z_n)$ if necessary, without loss of generality we may assume that $a=1$. We show that  $\Phi (z)=e^{z} P(z)$ satisfies the assumptions of Theorem~\ref{T:2}. Notice first that $\Phi''(0)\Phi(0)\ne \Phi'(0)^2$ if and only \begin{equation}\label{eq:C7.41}
P''(0)P(0)\ne P'(0)^2,\end{equation}
 and that $P(0)=1$, $P'(0)=-\sum_{n=1}^\infty \frac{1}{z_n}$, and $P''(0)=(-\sum_{n=1}^\infty \frac{1}{z_n})^2+\sum_{n=1}^\infty \, \frac{1}{z_n^2}$.  So $\eqref{eq:C7.41}$ holds precisely when $(ii)$ does. 
Finally, since $P$ is of growth $(1, 0)$ we have $\lim_{x\to -\infty} e^{x} P(x)=0$, so $\Phi^{-1}(\mathbb{D})$ contains arithmetic progressions of the desired form.
\end{proof} 


The case $a=0$ of Corollary~\ref{C:7.4} is covered by Corollary~\ref{C:7.3}.
\begin{corollary} \label{C:7.3}
Let $\Phi \in H(\C)$ be non-constant and of subexponential growth, with $|\Phi (0)|=1$.  Then $\Phi (D)$ has a hypercyclic algebra.
\end{corollary}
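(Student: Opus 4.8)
The plan is to deduce this from Theorem~\ref{T:ma^{m-1}}, in the reformulation stated in Remark~\ref{R:T:ma^{m-1}}(b). Since $\Phi$ is of subexponential growth (hence of exponential type), its Phragm\'{e}n--Lindel\"{o}f indicator $h$ is non-positive, so $\max\{1,e^{h(\theta)R}\}=1$ for every direction $\theta$ and every radius $R>0$. Thus it is enough to exhibit a single ray $\{te^{i\theta}:t>0\}$ on which $|\Phi|<1$ for all small $t>0$ and $|\Phi|>1$ at some point. To locate the candidate direction I would use Remark~\ref{R:T:ma^{m-1}}(a): writing $\Phi(z)=a_0+a_nz^n+\mathrm{o}(z^n)$ with $a_n\ne 0$, $n\ge 1$ and $|a_0|=1$, there is a partition of the plane into $2n$ open sectors $S_0,\dots,S_{2n-1}$ of aperture $\pi/n$ such that $|\Phi|<1$ holds on an initial segment of every ray interior to one of the ``good'' sectors $S_1,S_3,\dots,S_{2n-1}$. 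So the task reduces to showing that $|\Phi|$ exceeds $1$ somewhere on some ray interior to $\Omega:=S_1\cup S_3\cup\cdots\cup S_{2n-1}$.

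I would prove this by contradiction. If it fails, then $|\Phi|\le 1$ on the open set $\Omega$, hence on $\overline{\Omega}$ by continuity, and in particular on the two bounding rays of each complementary sector $S_2,S_4,\dots,S_{2n}=S_0$. Since the closures of all $2n$ sectors cover $\C$, it then remains only to bound $|\Phi|$ by $1$ on each complementary sector $S$; once that is done $\Phi$ is bounded on $\C$, hence constant by Liouville's theorem, contradicting that $\Phi$ is non-constant. If $n\ge 2$ this is immediate from the classical Phragm\'{e}n--Lindel\"{o}f principle, because $S$ has aperture $\pi/n$ while $\rho(\Phi)\le 1<2\le n$. If $n=1$ then $\overline{S}$ is a closed half-plane whose boundary $\ell$ is a full line on which $|\Phi|\le 1$; here I would instead invoke the sharper Phragm\'{e}n--Lindel\"{o}f estimate for functions of exponential type on a half-plane, which (using $h\le 0$) forces $|\Phi|\le 1$ on $\overline{S}$ as well. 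Either way $|\Phi|\le 1$ throughout $\C$, which is the contradiction sought, so in fact the required ray exists and Theorem~\ref{T:ma^{m-1}} applies.

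I expect the borderline case $n=1$, i.e.\ $\Phi'(0)\ne 0$, to be the main obstacle: there the complementary sector is a genuine half-plane, so the order-$<1$ version of Phragm\'{e}n--Lindel\"{o}f is not available, and one has to rely on the half-plane estimate valid for exponential-type functions of non-positive indicator. This---together with the reduction of condition~(ii) of Theorem~\ref{T:ma^{m-1}} to the plain inequality $|\Phi|>1$---is precisely where the subexponential-growth hypothesis is used, whereas the rest of the argument only needs $|\Phi(0)|=1$, the non-constancy of $\Phi$, and $\rho(\Phi)\le 1$.
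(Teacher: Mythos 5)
Your proof is correct, and it takes a genuinely different route from the paper's. Both arguments boil down to a Phragm\'{e}n--Lindel\"{o}f phenomenon, but they package it differently. The paper normalizes $\Phi$ so that $\Phi(z)=1-az^N+\mathrm{o}(z^N)$ with $a>0$, invokes the single fact that a non-constant function of growth $(1,0)$ cannot be bounded on any open half-plane to produce a point $z_0$ in the right half-plane with $|\Phi(z_0)|>1$, and then runs a concrete rotation-and-Taylor-expansion argument (replacing $\Phi$ by $\Phi(w^jz)$ with $w=e^{2\pi i/N}$) to bring $\mathrm{Arg}(z_0)$ into $(-\tfrac{\pi}{2N},\tfrac{\pi}{2N})$, so that the same ray that reaches $z_0$ also satisfies $|\Phi|<1$ near the origin. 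You instead work directly with the sector decomposition $S_0,\dots,S_{2n-1}$ of Remark~\ref{R:T:ma^{m-1}}(a), argue by contradiction that $|\Phi|\le 1$ on the union $\Omega$ of the odd sectors, and then propagate the bound across each even sector: for $n\ge 2$ by the classical Phragm\'{e}n--Lindel\"{o}f estimate for sectors of aperture $\pi/n$ (valid because $\rho(\Phi)\le 1<n$), and for $n=1$ by the half-plane estimate for entire functions of exponential type with non-positive indicator. Liouville then kills $\Phi$. You correctly flag $n=1$ as the delicate case, and your handling of it is sound: the two boundary rays of the bad half-plane form a full line on which $|\Phi|\le 1$, and $h\le 0$ (since $\Phi$ is of growth $(1,0)$) makes the half-plane estimate applicable. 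The trade-off is that the paper's argument is more constructive and self-contained given the one unboundedness lemma, whereas yours is cleaner conceptually but invokes Phragm\'{e}n--Lindel\"{o}f in two flavors; both are valid and of comparable length.
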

\begin{proof}
By Theorem~\ref{T:ma^{m-1}} it suffices to show that there exist $z_1\in\C$ and $0<\delta <1$ so that 
\begin{equation}
\begin{cases} \label{eq:AST2}
\mbox{ \ \ \ \ \  $|\Phi |<1$ on $(0, \delta z_1]$ }\\
\mbox{ $|\Phi(z_1)|>\mbox{max}\{ 1, e^{\tau_0 |z_1|}\}$, }
\end{cases}
\end{equation}
where $\tau_0=\tau_0(z_1)=\limsup_{r\to\infty} r^{-1} \, \log |\Phi (r \frac{z_1}{|z_1|})|$.
Since $\Phi\in H(\C)$ is non-constant, we may write 
\[
\Phi(z)=\Phi(0)+a_N z^N + o(Z^N) \ \ \ (z\to 0),
\]
with $N\ge 1$ and $a_N\ne 0$.  Notice also that for each $A\in \C\setminus\{ 0 \}$ and each $\theta\in\R$ we have that $\Phi$ satisfies $\eqref{eq:AST2}$ for $(z_1,\delta, \tau)$ if and only if 
\[
\widetilde{\Phi}(z)=e^{i\theta} \Phi(Az)
\]
satisfies $\eqref{eq:AST2}$ for $(\widetilde{z}_1, \widetilde{\delta}, \widetilde{\tau})=(\frac{z_1}{A}, \delta, |A|\tau)$.
So taking $\theta\in\R$ and $A\in\mathbb{C}$ of modulus one so that
\[
\begin{aligned}
e^{i\theta} \Phi(0)&=1\\
e^{i\theta} a_N A^N&<0,
\end{aligned}
\]
without loss of generality we may assume that
\begin{equation}
\label{eq:ASTAST}
\Phi(z)=1-az^N+o(z^N) \ \ (z\to 0)
\end{equation}
where $a>0$ and $N\ge 1$.  Since $\Phi$ is of growth $(1,0)$ it can't be bounded on any open half-plane. So there exists some $z_0\in\C$ with 
\[
\mbox{Re}(z_0)>0 \mbox{ and } |\Phi(z_0)|>1.
\] 
Now, for each $0\le j<N$ let $\Phi_j(z):= \Phi(w^jz)$, where $w:=e^{i\frac{2\pi}{N}}$. Notice that each $\Phi_j$ is of growth $(1,0)$ and satisfies $\eqref{eq:ASTAST}$.   Hence replacing $\Phi$ by $\Phi_j$ for some $0\le j<N$ if necessary, without loss of generality we may assume that $\mbox{Arg}(z_0)\in (-\frac{\pi}{2N}, \frac{\pi}{2N})$. Hence for $t\in\R$ we have
\begin{equation}
\begin{aligned}\label{eq:dibujo}
\Phi(tz_0)&= 1-at^Nz_0^N+g(t)\\
&=1-t^N \left( a z_0^N -\frac{g(t)}{t^N}\right),
\end{aligned}
\end{equation}
where $g(t)=o(t^N)$ as $t\to 0$. Now, let $K$ be a closed disc centered at $a z_0^N$ and contained in $\{ z\in\C: \ \mbox{Re}(z)>0 \}$.  Then there exists $\delta>0$ so that for any $0<s<\delta$ the set $sK$ is contained in the open disc $D(1,1)$. Hence by $\eqref{eq:dibujo}$ for each $0<t<\delta$ we have $|\Phi(tz_0)|<1$.  So $z_1:=z_0$ works, because by Remark~\ref{R:T:ma^{m-1}}(b)  we have  $\tau_0 =0$, as $\Phi$ is of subexponential growth.\end{proof}

\begin{lemma} \label{L:4T3.1}
Let $\Phi(z)=p(z)e^z$, where $p(z)=1+a_1z+a_2z^2+\dots +a_rz^r$, with $r\ge 1$ and $a_r\ne 0$.  Suppose that  $\mbox{Im}(a_1)\ne 0$ or that $ 2 a_2\ne  a_1^2$.  Then $\Phi(D)$ supports a hypercyclic algebra.    
\end{lemma}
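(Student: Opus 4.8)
The plan is to apply Theorem~\ref{T:2}. Since $\Phi(z)=p(z)e^z$ with $p(0)=1$, we have $\Phi(0)=1$, so the first hypothesis $|\Phi(0)|=1$ holds. The condition $\Phi''(0)\Phi(0)\ne\Phi'(0)^2$ can be unraveled in terms of the coefficients of $p$: writing $\Phi'(0)=a_1+1$ and $\Phi''(0)=2a_2+2a_1+1$ (using $(pe^z)'=(p'+p)e^z$ and $(pe^z)''=(p''+2p'+p)e^z$, evaluated at $0$), the condition $\Phi''(0)\ne\Phi'(0)^2$ becomes $2a_2+2a_1+1\ne (a_1+1)^2$, i.e. $2a_2\ne a_1^2$. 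So when $2a_2\ne a_1^2$, this hypothesis of Theorem~\ref{T:2} is immediate. When instead $2a_2=a_1^2$ but $\mathrm{Im}(a_1)\ne 0$, I would first replace $\Phi$ by $\Phi_c(z):=C_\varphi(\Phi)(z)=\Phi(cz)=p(cz)e^{cz}$ for a suitable unimodular (or more generally nonzero) rotation $c$: by Lemma~\ref{L:CD}, $\Phi(D)$ and $\Phi_c(D)$ share their hypercyclic algebras. Under $p(z)\mapsto p(cz)$ the coefficients transform as $a_1\mapsto ca_1$, $a_2\mapsto c^2a_2$, so the quantity $2a_2-a_1^2$ scales by $c^2$ and stays zero — this substitution does not by itself escape the degenerate locus. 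The point of the rotation is rather to arrange the geometric hypothesis below; the nondegeneracy in the $\mathrm{Im}(a_1)\ne 0$ case must be extracted differently, so let me reconsider.

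**Revised strategy.** The cleaner route treats the two cases via the two hypotheses of Theorem~\ref{T:2} separately, and the genuine work is verifying that $\Phi^{-1}(\mathbb{D})$ contains arbitrarily long arithmetic progressions $\{ja\}_{j=1}^m$. For the \emph{progression} condition: since $|\Phi(z)|=|p(z)|\,e^{\mathrm{Re}(z)}$, choosing $a$ with $\mathrm{Re}(a)<0$ we get $|\Phi(ja)|=|p(ja)|e^{j\,\mathrm{Re}(a)}\to 0$ as $j\to\infty$ along the ray $\mathbb{R}_{>0}a$, because $|p(ja)|$ grows only polynomially in $j$ while $e^{j\mathrm{Re}(a)}$ decays exponentially. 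Hence for every $m$, once $a$ is taken with sufficiently negative real part (or, equivalently, replacing $a$ by $Na$ for large $N$), all of $a,2a,\dots,ma$ lie in $\mathbb{D}$. So the arithmetic-progression hypothesis of Theorem~\ref{T:2} holds for \emph{any} such $\Phi$; in particular it holds here.

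**The nondegeneracy condition.** It remains to secure $\Phi''(0)\Phi(0)\ne\Phi'(0)^2$. As computed above this is equivalent to $2a_2\ne a_1^2$. In the case $2a_2\ne a_1^2$ we are done directly. In the case $2a_2=a_1^2$ we instead exploit that $p$ is \emph{non-constant} of degree $r\ge 1$ with $a_r\ne 0$, together with $\mathrm{Im}(a_1)\ne 0$, to apply Theorem~\ref{T:2} not at the origin but after a change of variable that shifts the "base point." Concretely: the hypotheses of Theorem~\ref{T:2} are stated at $0$, but one may instead verify the hypotheses of the more flexible Lemma~\ref{L:1}-based argument at a nearby point. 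Alternatively — and this is the approach I expect the authors take — apply $C_\varphi$ with $\varphi(z)=cz$ to reduce to $\Phi_c(z)=p(cz)e^{cz}$ and observe that $|\Phi_c|$ on the imaginary axis is $|p(cit)|e^{-t\,\mathrm{Im}(c)}$; by choosing $c$ so that $\mathrm{Im}(c)>0$ and simultaneously arranging, via the first-order behavior of $|\Phi_c(it)|^2$ at $t=0$ governed by $\mathrm{Im}(a_1)\ne 0$ (as in Corollary~\ref{C:7.1}), that $|\Phi_c|<1$ on a one-sided interval of the imaginary axis near $0$ while $|\Phi_c|$ is unbounded along a ray. This last mechanism is exactly the pattern of Corollary~\ref{C:7.1} and Theorem~\ref{T:ma^{m-1}}, so in the $\mathrm{Im}(a_1)\ne 0$ subcase I would appeal to Theorem~\ref{T:ma^{m-1}} rather than Theorem~\ref{T:2}: take $z_1$ a positive real multiple of $a^{-1}$-rotated direction so that $\mathrm{Re}$ grows and $|\Phi(z_1)|\to\infty$ faster than any $e^{\tau_0|z_1|}$ bound forced by the subexponential part $p$ (here $\tau_0$ equals $\mathrm{Re}$ of the chosen unit direction times $1$, coming from the $e^z$ factor, and one checks $|\Phi(z_1)|$ beats $e^{\tau_0|z_1|}$ because the inequality $|p(z_1)|>1$ can be arranged), combined with $|\Phi|<1$ on an initial segment coming from $\mathrm{Im}(a_1)\ne 0$.

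**Main obstacle.** The delicate point is the degenerate case $2a_2=a_1^2$ (which forces $\mathrm{Im}(a_1)\ne 0$ by hypothesis): here Theorem~\ref{T:2} fails at the origin and one must genuinely use the geometry of $|\Phi(z)|=|p(z)|e^{\mathrm{Re}(z)}$. I expect the argument to hinge on: (i) the sign of $\frac{d}{dt}\big|_{t=0}|\Phi(it)|^2 = 2\,\mathrm{Re}(\Phi'(0)\overline{\Phi(0)}\cdot i) = 2\,\mathrm{Im}(\Phi'(0)) = 2\,\mathrm{Im}(a_1)\ne 0$, giving a one-sided neighborhood on the imaginary axis where $|\Phi|<1$; and (ii) the fact that $\Phi$ (having a genuine $e^z$ factor, not just the subexponential $p$) is unbounded along the positive real axis, with growth rate exactly matching $\tau_0$ but with the polynomial factor $|p|$ pushing $|\Phi(z_1)|$ strictly above $e^{\tau_0|z_1|}$ at a well-chosen $z_1$. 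Assembling these into the precise hypotheses $(i)$–$(ii)$ of Theorem~\ref{T:ma^{m-1}}, after a rotation $C_\varphi$ to align the "$|\Phi|<1$ direction" with the "$|\Phi|$ large direction" appropriately, is the technical heart; everything else is bookkeeping with the coefficient identities and the polynomial-versus-exponential growth comparison.
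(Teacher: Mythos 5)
Your proposal lands on exactly the paper's route: the case $a_1\in\R$ (which, by hypothesis, forces $2a_2\ne a_1^2$) is handled by Theorem~\ref{T:2} via precisely your identity $\Phi''(0)\Phi(0)-\Phi'(0)^2=2a_2-a_1^2$ together with $|\Phi(x)|=|p(x)|e^x\to 0$ as $x\to-\infty$ for the arithmetic progressions, while the case $\mathrm{Im}(a_1)\ne 0$ is dispatched by Corollary~\ref{C:7.1}, which the paper proves by verifying Theorem~\ref{T:ma^{m-1}} along the imaginary axis exactly as you sketch.

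One wrinkle in your write-up is the contemplated rotation $\Phi\mapsto\Phi(cz)$ ``to align the $|\Phi|<1$ direction with the $|\Phi|$-large direction'': a homothety $z\mapsto cz$ rotates every ray by the same angle, so it cannot bring two distinct rays into coincidence, and that step as described would stall. But no alignment is needed, because the imaginary axis already serves both halves of Theorem~\ref{T:ma^{m-1}}: for real $t$ one has $|\Phi(it)|=|p(it)|$ since $|e^{it}|=1$, so $\tau_0=0$ there and the growth condition reduces to $|p(it_1)|>1$, true for $t_1$ large because $p$ is a non-constant polynomial; and your derivative computation $\frac{d}{dt}\big|_{t=0}|p(it)|^2=-2\,\mathrm{Im}(a_1)\ne 0$ gives $|\Phi|<1$ on one half of the imaginary axis near $0$. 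The rotation appearing in the proof of Corollary~\ref{C:7.1} only normalizes the exponent $a$ to $1$; here $\Phi(z)=p(z)e^z$ has $a=1$ already, so that rotation is the identity and can be skipped.
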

\begin{proof}
The case $\mbox{Im}(a_1)\ne 0$ is immediate from Corollary~\ref{C:7.1}. So assume that $a_1\in\R$. We show that $\Phi$ satisfies the assumptions of Theorem~\ref{T:2}.  Since $|\Phi (x)|\to 0$ as $x\to -\infty$,  
the set $\Phi^{-1}(\D)$ contains an interval of the form $(-\infty, x]$, and in particular the desired arithmetic progressions. But $\Phi(0)=1$ and $
\Phi''(0)\Phi(0)-\Phi'(0)^2 =2a_2-a_1^2\ne 0$. \end{proof}

\begin{proof}[Proof of Theorem~\ref{T:Ts}]
Conclusion (a) has been established in Corollary~\ref{C:7.3}. So assume $\Phi$ is not of subexponential growth, that is, it has growth order one and positive  finite type. If $\Phi$ is zero-free it is a scalar multiple of an exponential and Conclusion (b1) holds, see \cite{aron_conejero_peris_seoane-sepulveda2007powers}. Conclusion (b2) follows from
 Lemma~\ref{L:CD} and Lemma~\ref{L:4T3.1}, noting that the scalar $a$ must be non-zero in this case as $\Phi$ is of order one. 
 To show (b3), consider the Hadamard factorization
 \[
 \Phi(z)=e^{az} P(z),
 \]
 of $\Phi$, where $P(z)=\prod_{n=1}^\infty E_p(\frac{z}{z_n})$ is the canonical product of genus $p$ of the zero-set $(z_n)$ of $\Phi$. Notice that $p$ must be zero or one, since $\Phi$ has order one.
 Case (i), when $\sum_{n=1}^\infty |z_n|^{-1}<\infty$, has been established in Corollary~\ref{C:7.4} as $p=0$ in this case. Case $(ii)$, where $\sum_{n=1}^\infty |z_n|^{-1}=\infty$, implies we have $p=1$ and hence
 \[
 P(z)=\prod_{n=1}^{\infty} \{ (1-\frac{z}{z_n})\, e^\frac{z}{z_n}\},
 \]
so $P(0)=1$, $P'(0)=0$, and $P''(0)=\sum_{n=1}^\infty z_n^{-2}$.   So by our assumption we have $P''(0)P(0)\ne P'(0)^2$, and equivalently that $\Phi''(0)\Phi(0)\ne \Phi'(0)^2$. Also, $\Phi'(0)=P'(0)+aP(0)=a\ne 0$ by our assumption on $a$, and the conclusion follows from Corollary~\ref{C:Phi'(0)}.
\end{proof}

\begin{corollary}
Let $(z_n)$ be a sequence of complex scalars with $0<|z_1|\le |z_2|\le \dots$ and satisfying
\begin{enumerate}
\item[{\rm (i)}]\ $\sum_{n=1}^\infty |z_n|^{-1}$ diverges, and $\sum_{n=1}^\infty |z_n|^{-p}$ converges for each $p>1$.
\item[{\rm (ii)}]\  $\limsup_{r\to\infty}  r^{-1} \mbox{max}\{j\in\N: \ |z_j|\le r\} <\infty$, 
\item[{\rm (iii)}]\ The set  $\{ \sum_{|z_n|\le r} z_n^{-1} :\ r >0 \}$ is bounded, and
\item[{\rm (iv)}]\ $\sum_{n=1}^\infty z_n^{-2} \ne 0$.
\end{enumerate}
Then for each 
\[
\Phi(z)=e^{az} \, \prod_{n=1}^\infty \{ (1-\frac{z}{z_n}) e^\frac{z}{z_n} \}
\]
with $a\ne 0$, the operator $\Phi(D)$ supports a hypercyclic algebra. 
\end{corollary}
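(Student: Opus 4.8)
The plan is to verify that the stated symbol $\Phi$ is a non-constant entire function of exponential type with $\Phi(0)=1$, and then to quote Theorem~\ref{T:Ts}. Since $a\ne 0$, the factor $e^{az}$ is zero-free, so the zero set of $\Phi$ (listed with multiplicity) is exactly $(z_n)$; in particular $\Phi$ is non-constant, and since $\sum_{n}|z_n|^{-1}$ diverges it has infinitely many zeros. Also $\Phi(0)=e^{0}\prod_{n}E_1(0)=1$.

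The heart of the argument is to show that $P(z):=\prod_{n=1}^\infty\bigl(1-\tfrac{z}{z_n}\bigr)e^{z/z_n}$, and hence $\Phi=e^{az}P$, is of exponential type. Hypothesis (i) guarantees that the exponent of convergence of $(z_n)$ equals $1$ and that the genus of $(z_n)$ is $p=1$, so $P$ is exactly the Weierstrass canonical product of genus one attached to $(z_n)$ and, by Borel's theorem, is entire of order $1$. To upgrade this to \emph{finite} exponential type I would invoke Lindel\"{o}f's theorem for genus-one canonical products (see \cite[\S 2.10]{Boas}): such a product is of exponential type precisely when the counting function $n(r):=\#\{j:|z_j|\le r\}$ is $\bigo(r)$ and the truncated sums $\sum_{|z_n|\le r}z_n^{-1}$ remain bounded as $r\to\infty$ --- which are exactly hypotheses (ii) and (iii). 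If one prefers a self-contained estimate, split $\log|P(z)|=\sum_n\log|E_1(z/z_n)|$ at $|z_n|=2|z|$: on the far range use $\log|E_1(w)|=\bigo(|w|^2)$ for $|w|\le\tfrac12$ (so that (ii), via $\sum_{|z_n|>r}|z_n|^{-2}=\bigo(r^{-1})$, makes that part $\bigo(|z|)$), and on the near range use $|E_1(w)|\le(1+|w|)e^{\mbox{Re}\,w}$, so that (ii) handles $\sum_{|z_n|\le 2|z|}\log(1+|z/z_n|)=\bigo(|z|)$ and (iii) handles $\mbox{Re}\bigl(z\sum_{|z_n|\le 2|z|}z_n^{-1}\bigr)=\bigo(|z|)$. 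Either way $\log M_P(r)=\bigo(r)$, and since $e^{az}$ is of exponential type as well, so is $\Phi$.

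With $\Phi$ now known to be a non-constant entire function of exponential type with $\Phi(0)=1$, the conclusion follows by a short dichotomy. If $\Phi$ is of subexponential growth, Theorem~\ref{T:Ts}(a) already gives a hypercyclic algebra for $\Phi(D)$. If $\Phi$ is not of subexponential growth, then it lies in case (b): it has infinitely many zeros, and because the genus is $p=1$ its Hadamard factorization is $\Phi(z)=e^{az}\prod_{n=1}^\infty E_1(z/z_n)=e^{az}\prod_{n=1}^\infty(1-\tfrac{z}{z_n})e^{z/z_n}$ with Hadamard symbol $a\ne 0$; moreover $\sum_n|z_n|^{-1}$ diverges and $\sum_n z_n^{-2}\ne 0$ is hypothesis (iv). Hence Theorem~\ref{T:Ts}(b)3(ii) applies and $\Phi(D)$ supports a hypercyclic algebra.

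The main obstacle is precisely the exponential-type verification of $P$: everything else is bookkeeping against Theorem~\ref{T:Ts}, but one must be careful to use (i), (ii) and (iii) in the exact combination that forces finite exponential type rather than merely order one, and --- if the self-contained route is taken --- to check that the boundary terms produced by integrating by parts against $n(t)$ are genuinely $\bigo(r)$.
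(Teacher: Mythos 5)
Your proposal is correct and follows essentially the same route as the paper: use (i) to get that the genus-one canonical product has order $1$ (Boas, Theorem~2.6.5), use (ii) and (iii) together with Lindel\"{o}f's theorem (Boas, \S 2.10) to upgrade this to exponential type, and then apply Theorem~\ref{T:Ts}. Your explicit dichotomy between Theorem~\ref{T:Ts}(a) and (b)3(ii), and the optional self-contained split of $\log|P|$ at $|z_n|=2|z|$, are slightly more careful than the paper's one-line citation but do not constitute a different proof.
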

\begin{proof}
By $(i)$ the convergence exponent of the zeros of $f$ is $\rho_1=1$, and 
the 
canonical product $\prod_{n=1}^\infty \{ (1-\frac{z}{z_n}) e^\frac{z}{z_n} \}$ has order equal to  the convergence exponent of its zeros \cite[Theorem~2.6.5]{Boas}, which is equal to $1$, by $(i)$. So $\Phi$ is of order at most $1$, and by $(ii)$ and $(iii)$ we may conclude $\Phi$ is of exponential type \cite[Theorem 2.9.5 and Theorem 2.10.1]{Boas}. The conclusion follows from Theorem~\ref{T:Ts}. 
\end{proof}

The following consequence of Theorem~\ref{T:Ts} complements \cite[Corollary 14]{BCP2018}.
\begin{corollary}
Let $p=p(z)$ be a non-constant polynomial with $|p(0)|\le 1$. Then the differentiation operator  $p(\frac{d}{dx})$
acting on $\mathbb{C}^{\infty}(\mathbb{R}, \mathbb{C})$ supports a hypercyclic algebra.
\end{corollary}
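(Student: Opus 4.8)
The plan is to deduce this from the already-established statement for the convolution operator $p(D)$ on $H(\C)$, by transporting a hypercyclic algebra downstairs along a quasi-conjugacy that is simultaneously a topological algebra homomorphism. First I would equip $\C^\infty(\R,\C)$ with the topology of uniform convergence of all derivatives on compact subsets of $\R$, so that it becomes a separable Fr\'echet algebra on which $p(\tfrac{d}{dx})$ acts as a well-defined continuous operator (as $p$ is a polynomial), and then consider the restriction map
\[
J:H(\C)\longrightarrow \C^\infty(\R,\C),\qquad J(f)=f|_\R .
\]
This map is continuous, it is an algebra homomorphism, it is injective (an entire function vanishing on $\R$ vanishes identically), and it intertwines the two operators, $J\circ p(D)=p(\tfrac{d}{dx})\circ J$, since $p(D)$ acts on $H(\C)$ exactly as $p(\tfrac{d}{dx})$.

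The one point that needs an argument is that $J$ has dense range; this is the main (and only mildly nontrivial) obstacle, and it is classical. Since every polynomial lies in $J(H(\C))$, it suffices to check that polynomials are dense in $\C^\infty(\R,\C)$. Given $f\in\C^\infty(\R,\C)$, $n\in\N$, $k\in\N_0$ and $\varepsilon>0$, I would approximate $f^{(k)}$ uniformly on $[-n,n]$ by a polynomial $q$ (Weierstrass), take the polynomial $P$ with $P^{(k)}=q$ and $P^{(j)}(0)=f^{(j)}(0)$ for $0\le j<k$, and bound $\sup_{|x|\le n}|P^{(j)}-f^{(j)}|$ for each $j\le k$ by iterated integration from $0$; choosing $q$ close enough to $f^{(k)}$ then places $P$ in the prescribed neighbourhood of $f$.

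Next I would invoke the results of this paper to produce a hypercyclic algebra $\mathcal A\subseteq H(\C)$ for $p(D)$. If $|p(0)|<1$, then Theorem~\ref{T:1.1}(1) applies, because a non-constant polynomial is never a scalar multiple of an exponential function (if $a\ne 0$ the function $\lambda e^{az}$ is transcendental, and if $a=0$ it is constant); hence $p(D)$ supports a hypercyclic algebra. If $|p(0)|=1$, then $p$ is a non-constant entire function of subexponential growth with $|p(0)|=1$, so Corollary~\ref{C:7.3} shows that $p(D)$ supports a hypercyclic algebra. Either way I obtain the desired $\mathcal A$.

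Finally I would push $\mathcal A$ forward. As $J$ is an algebra homomorphism, $J(\mathcal A)$ is a subalgebra of $\C^\infty(\R,\C)$, and it is nonzero since $J$ is injective. If $g\in J(\mathcal A)\setminus\{0\}$, write $g=J(f)$ with $0\ne f\in\mathcal A$; then $f$ is hypercyclic for $p(D)$, so
\[
\{\, p(\tfrac{d}{dx})^{N}g \,:\, N\in\N_0 \,\}=J\big(\{\,p(D)^{N}f:N\in\N_0\,\}\big)
\]
is the $J$-image of a dense subset of $H(\C)$, hence dense in $\overline{J(H(\C))}=\C^\infty(\R,\C)$; that is, $g$ is hypercyclic for $p(\tfrac{d}{dx})$. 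Thus $J(\mathcal A)$ is a hypercyclic algebra for $p(\tfrac{d}{dx})$, which is what we want. Apart from the elementary density fact of the second step, everything is the standard functorial transfer of hypercyclicity along a topological algebra homomorphism with dense range, so I do not expect any serious difficulty.
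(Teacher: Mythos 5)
Your proof is correct and follows essentially the same route as the paper: reduce to the operator $p(D)$ on $H(\mathbb{C})$ by observing that the restriction map $J:H(\mathbb{C})\to\mathbb{C}^\infty(\mathbb{R},\mathbb{C})$ is a continuous multiplicative (algebra) homomorphism with dense range that intertwines the two operators, and then apply Theorem~\ref{T:1.1} when $|p(0)|<1$ and the subexponential-growth result for unimodular constant term when $|p(0)|=1$. You merely spell out in more detail what the paper asserts in one line (that $p(\tfrac{d}{dx})$ is quasi-conjugate to $p(D)$ via a multiplicative operator), and you cite Corollary~\ref{C:7.3} directly rather than passing through Le\'on--M\"uller and Theorem~\ref{T:Ts}(a), but these are the same underlying result.
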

\begin{proof} The operator $p(\frac{d}{dx})$ is quasi-conjugate to $p(D):H(\mathbb{C})\to H(\mathbb{C})$ via a multiplicative operator, so it suffices to verify that $p(D)$ supports a hypercyclic algebra. The conclusion now follows from Theorem~\ref{T:1.1} when $|p(0)|<1$ and from the Le\'{o}n-M\"{u}ller Theorem and Case (a)  of Theorem~\ref{T:Ts} when $|p(0)|=1$.
\end{proof}

\section{Proof of Theorem~\ref{T:IG}.} \label{S:T:IG}

We note that the same arguments used in the proofs of Theorem~\ref{T:Ts}, Theorem~\ref{T:2} and Theorem~\ref{T:ma^{m-1}} give general eigenvalue criteria for the existence of hypercyclic algebras. This general formulation was used by Bayart~\cite{bayart} to provide hypercyclic algebras for operators on the space $\ell_1(\mathbb{N})$ endowed with the convolution product, for instance. A criterion corresponding to Theorem~\ref{T:2}, for example, may be stated as follows.

\begin{theorem}\label{T:2bis}
Let $T$ be an operator on a separable infinite dimensional $F$-algebra $X$. Suppose there exist a function $E:\mathbb{C}\to X$ and an entire function $\phi:\mathbb{C}\to\mathbb{C}$ satisfying the following:
\begin{enumerate}
\item[{\rm (a)}] \ For each $\lambda\in\mathbb{C}$, $TE(\lambda)=\phi(\lambda) E(\lambda)$,

\item[{\rm (b)}]\ For each $\lambda, \mu\in\mathbb{C}$, $E(\lambda )E(\mu)= E(\lambda+\mu)$,

\item[{\rm (c)}]\ For each subset $\Lambda$ of $\mathbb{C}$ supporting an accumulation point in $\mathbb{C}$, the set $\{ E(\lambda):\ \lambda\in\Lambda \}$ has dense linear span in $X$,

\item[{\rm (d)}]\ The function $\phi$ satisfies $|\phi (0)|=1$, and $\phi''(0)\phi (0) \ne \phi'(0)^2$, and

\item[{\rm (e)}]\ For each positive integer $m$ there exists a non-zero scalar $a=a(m)$ so that $\{ a, 2a, \dots, ma \}\subset \phi^{-1}(\mathbb{D})$.

\end{enumerate}
Then the singly generated algebra induced by a generic element $f$ of $X$ is a hypercyclic algebra for $T$.
\end{theorem}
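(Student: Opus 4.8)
The plan is to transcribe the proof of Theorem~\ref{T:2} into this abstract setting: the family $\{E(\lambda)\}_{\lambda\in\C}$ plays the role of the exponentials $\{e^{\lambda z}\}$ in $H(\C)$, the scalar function $\phi$ plays the role of the symbol $\Phi$, the eigenvalue relation (a) and the multiplicativity (b) replace the identities $\Phi(D)e^{\lambda z}=\Phi(\lambda)e^{\lambda z}$ and $e^{\lambda z}e^{\mu z}=e^{(\lambda+\mu)z}$, and hypothesis (c) replaces the fact that finite combinations of exponentials whose frequencies accumulate in $\C$ are dense in $H(\C)$. Arguing exactly as in the proof of Theorem~\ref{T:2}, it suffices by Lemma~\ref{L:0} to show that, given nonempty open sets $U,V,W\subset X$ with $0\in W$ and an integer $m\ge 2$, there are $f\in U$ and $q\in\N$ with $T^q(f^m)\in V$ and $T^q(f^j)\in W$ for $1\le j<m$.

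First, use (e) to fix a nonzero $w$ with $\{w,2w,\dots,mw\}\subset\phi^{-1}(\D)$, and, by continuity of $\phi$ and the argument in the footnote of the proof of Theorem~\ref{T:2}, pick $\delta>0$ so small that every sum $w_1+\cdots+w_s+v_1+\cdots+v_d$ with $1\le s\le m$, $w_i\in D(w,\delta)$, $0\le d\le m-s$, and $v_i\in D(0,\delta)$ lies in $\phi^{-1}(\D)$. Since $|\phi(0)|=1$ and $\phi''(0)\phi(0)\ne\phi'(0)^2$ by (d), Lemma~\ref{L:1}(a) applied with base point $0$ and this $\delta$ yields a nonzero $w^*\in D(0,\delta)$ along which $z\mapsto\log|\phi(z)|$ is strictly convex and strictly increasing on $[0,w^*]$; since $|\phi(0)|=1$ this gives $|\phi(z)|>1$ for every $z\in(0,w^*]$. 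As $D(w,\delta)$ and the segment $[\tfrac12 w^*,w^*]$ each have an accumulation point in $\C$, hypothesis (c) lets us choose $p\in\N$, distinct points $\alpha_1,\dots,\alpha_p\in D(w,\delta)$ and $\beta_1,\dots,\beta_p\in[\tfrac12 w^*,w^*]$, and nonzero scalars $a_i,b_i$ with $A:=\sum_{i=1}^p a_iE(\alpha_i)\in U$ and $B:=\sum_{i=1}^p b_iE(\beta_i)\in V$ (one may take a common number $p$ of summands by appending a few extra terms with small nonzero coefficients and fresh frequencies).

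Now set $\lambda_i:=\beta_i/m$ and, for each $N\in\N$, let $c_i=c_i(N)$ solve $b_i=c_i^m\phi(m\lambda_i)^N$; since $|\phi(m\lambda_i)|=|\phi(\beta_i)|>1$ we get $c_i(N)\to0$, so $f=f_N:=A+\sum_{i=1}^p c_iE(\lambda_i)\in U$ once $N$ is large. Expanding $f_N^n$ via repeated use of (b) as a linear combination of the vectors $E(\alpha u+\lambda v)$, $(u,v)\in\N_0^p\times\N_0^p$ with $|u|+|v|=n$, and then applying $T^N$ by iterating (a), one obtains, exactly as in the proof of Theorem~\ref{T:2},
\[
\begin{aligned}
T^N(f_N^m)&=B+\sum_{(u,v)\in\mathcal{L}_m^*}S(u,v,N)\,E(\alpha u+\lambda v),\\
T^N(f_N^n)&=\sum_{(u,v)\in\mathcal{L}_n}S(u,v,N)\,E(\alpha u+\lambda v)\qquad(1\le n<m),
\end{aligned}
\]
where $S(u,v,N)=\gamma_{u,v}\,\phi(\alpha u+\lambda v)^N\prod_i c_i^{v_i}$ and $\gamma_{u,v}$ (a multinomial coefficient) does not depend on $N$. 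Substituting $c_i^m=b_i\,\phi(m\lambda_i)^{-N}$ gives $|S(u,v,N)|=\bigo(\theta_{u,v}^N)$ with $\theta_{u,v}=|\phi(\alpha u+\lambda v)|\big/\prod_i|\phi(m\lambda_i)|^{v_i/m}$, and the three-case analysis of that proof applies verbatim to yield $\theta_{u,v}<1$ for every $(u,v)\in\mathcal{L}_m^*\cup\mathcal{L}_1\cup\cdots\cup\mathcal{L}_{m-1}$: when $|u|=0$ one uses strict convexity of $\log|\phi|$ on $[0,w^*]$ (together with $|\phi(0)|=1$ when $1\le|v|<m$, and $v\notin m\mathbb{E}_p$ when $|v|=m$); when $1\le|u|\le m$ and $|v|\le m-|u|$ one uses the choice of $\delta$ together with $|\phi(m\lambda_i)|>1$. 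Hence $T^N(f_N^m)\to B\in V$ and $T^N(f_N^n)\to0\in W$, so once $N$ is large the pair $(f_N,N)$ meets the requirements of Lemma~\ref{L:0}, and the generic element of $X$ singly generates a hypercyclic algebra for $T$.

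I do not expect a genuine obstacle: the point is to observe that (a)--(e) abstract precisely the structural facts about $\Phi(D)$ on $H(\C)$ used in the proof of Theorem~\ref{T:2} --- (a) and (b) replace the eigenvalue and exponential-law identities, (c) replaces density of exponential spans, and (d)--(e) are copied directly --- and that the only topology actually invoked on $X$ is continuity of scalar multiplication. The one place calling for slight care is the use of (c) to produce the approximants $A$ and $B$ with a common number of terms and with the ``frequencies'' $\alpha u+\lambda v$ positioned where the small-ball choice of $\delta$ and the strict convexity on $[0,w^*]$ keep $|\phi(\alpha u+\lambda v)|$ under control; but this bookkeeping is identical to the one already carried out for Theorem~\ref{T:2}.
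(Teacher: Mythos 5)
Your proof is correct and follows exactly the approach the paper intends: the paper does not give an explicit proof of Theorem~\ref{T:2bis}, but simply asserts that it follows by ``the same arguments used in the proofs of Theorem~\ref{T:Ts}, Theorem~\ref{T:2} and Theorem~\ref{T:ma^{m-1}},'' and your write-up is precisely that transcription, with the eigenvector-family identities (a)--(b), the density hypothesis (c), and conditions (d)--(e) substituted at every point where the proof of Theorem~\ref{T:2} uses the corresponding concrete facts about $\Phi(D)$, $e^{\lambda z}$, and $H(\mathbb{C})$. All the delicate steps --- the footnote argument for $\delta$, the application of Lemma~\ref{L:1}(a) at the base point $0$, the common-$p$ bookkeeping when invoking (c), the definition of $c_i(N)$ and the three-case estimate $\theta_{u,v}<1$ --- carry over verbatim, as you note.
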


We adopt this point of view when providing with Theorem~\ref{T:2bisbis} and Theorem~\ref{T:7bisbis} below general eigenvalue criteria for the existence of dense hypercyclic algebras induced by infinitely many free generators, and use such criteria to establish Theorem~\ref{T:IG}.

\begin{theorem}\label{T:2bisbis}
Let $X$ be a separable commutative $F$-algebra that supports a dense freely generated subalgebra, and let $T$ be an operator on $X$. Suppose there exist a function $E:\mathbb{C}\to X$ and an entire function $\phi:\mathbb{C}\to\mathbb{C}$ satisfying the following:
\begin{enumerate}
\item[{\rm (a)}] \ For each $\lambda\in\mathbb{C}$, $TE(\lambda)=\phi(\lambda) E(\lambda)$,

\item[{\rm (b)}]\ For each $\lambda, \mu\in\mathbb{C}$, $E(\lambda )E(\mu)= E(\lambda+\mu)$,

\item[{\rm (c)}]\ For each subset $\Lambda$ of $\mathbb{C}$ supporting an accumulation point in $\mathbb{C}$, the set $\{ E(\lambda):\ \lambda\in\Lambda \}$ has dense linear span in $X$, and

\item[{\rm (d)}]\ The function $\phi$ satisfies $|\phi (0)|=1$,  $\phi''(0)\phi (0) \ne \phi'(0)^2$, and $\phi'(0)\ne 0$.

\end{enumerate}
Then the generic element $f=(f_n)_{n=1}^\infty$ of $X^\mathbb{N}$  freely generates a dense subalgebra of $X$ consisiting entirely {\rm (}but zero{\rm )} of hypercyclic vectors for $T$.
\end{theorem}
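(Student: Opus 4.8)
The plan is to run the Baire category scheme behind Theorem~\ref{T:2} in the product space $X^{\N}$, using the densely strongly algebrable variant of Lemma~\ref{L:0} (cf.~\cite{BP_algebrable,barto}), and to exploit that the extra hypothesis $\phi'(0)\ne 0$ in~(d) makes Lemma~\ref{L:1}(b) applicable at $w_0=0$: it yields $w_1$ near $0$ so that $t\mapsto\log|\phi(tw_1)|$ is strictly increasing and strictly convex on $[-1,1]$, whence there are a segment $J^{-}\subseteq(-\delta,-\tfrac{\delta}{2})w_1$ with $|\phi|<1$ on $J^{-}$ and a segment $J^{+}\subseteq(0,\tfrac{\delta}{2})w_1$ with $|\phi|>1$ on $J^{+}$, with $\log|\phi|$ remaining strictly convex on $J^{-}\cup\{0\}\cup J^{+}$ and both $J^{\pm}$ possessing accumulation points in $\C$. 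Fixing a countable dense set $(x_k)_k$ of $X$, I would reduce the theorem to showing that the set of $f=(f_n)_n\in X^{\N}$ for which (i) the $f_n$ are algebraically independent, (ii) the subalgebra $\mathcal A(f)$ they generate is dense, and (iii) every nonzero element of $\mathcal A(f)$ is hypercyclic for $T$, contains a dense $G_\delta$. Genericity of (i) is standard~\cite{barto,BP_algebrable}; genericity of (ii) follows from the hypothesis that $X$ already supports a dense freely generated subalgebra together with the fact that $\{f:\operatorname{dist}(x_k,\mathcal A(f))<\varepsilon\}$ is open and dense for each $k,\varepsilon$. The substance lies in (iii).

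A nonzero element of $\mathcal A(f)$ is $P(f_1,\dots,f_N)$ with $P\in\C[X_1,\dots,X_N]$ nonzero and $P(0)=0$; since there are uncountably many such $P$, I would use the dominant monomial device of~\cite{bayart_matheron2009dynamics,bayart}: fix a monomial order $\prec$ and, for each $P$, distinguish its $\prec$-largest monomial $cX^{\beta}$, so that (iii) reduces --- after a further countable intersection over $N$, over $\beta\in\N_0^{N}\setminus\{0\}$, over finite $F\subseteq\{\gamma\in\N_0^{N}\setminus\{0\}:\gamma\prec\beta\}$, and over coefficient data ranging in a countable dense family --- to the following finitary claim: \emph{for all such $N,\beta,F$, all nonempty open $U_1,\dots,U_N,V\subseteq X$ and all open $W\ni 0$, there exist $f_i\in U_i$ and $q\in\N$ with $T^{q}(f^{\beta})\in V$ and $T^{q}(f^{\gamma})\in W$ for every $\gamma\in F$}, where $f^{\gamma}:=f_1^{\gamma_1}\cdots f_N^{\gamma_N}$. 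Writing $T^{q}(P(f))=cT^{q}(f^{\beta})+\sum_{\gamma\prec\beta}c_\gamma T^{q}(f^{\gamma})$ and choosing $V$, $W$ small around a rescaled target point and around $0$ then returns (iii) on a dense $G_\delta$.

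The finitary claim I would prove following the proof of Theorem~\ref{T:2}, with hypothesis~(e) replaced by the two-sided strict convexity of Lemma~\ref{L:1}(b). Fix $\delta\in(0,1)$ with $(\max_{\gamma\in F}|\gamma|)\cdot\delta<1$, small enough that $J^{\pm}$ and the convexity all lie in the interval of Lemma~\ref{L:1}(b). Using (c), choose $A_i\in U_i$ of the form $A_i=\sum_j a_{ij}E(\alpha_{ij})$ with all $\alpha_{ij}\in J^{-}$; approximate a point of $V$ by $B=\sum_k b_k E(\mu_k)$, the $\mu_k$ being suitable ``vertex'' sums of eigenparameters placed in $J^{+}$ --- the eigenparameters attached to the distinct variables $i\in\operatorname{supp}(\beta)$ being chosen at vastly separated scales dictated by $\prec$; and set $f_i=A_i+R_i$ with $R_i=\sum_l c_{il}(q)E(\lambda_{il})$, $\lambda_{il}\in J^{+}$ (and $R_i=0$ when $\beta_i=0$), the coefficients $c_{il}(q)$ solving, via (a)--(b), the eigenvalue equations forcing the vertex part of $T^{q}(\prod_i R_i^{\beta_i})$ to equal $B$. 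Since $|\phi|>1$ at each $\mu_k$, one gets $c_{il}(q)\to 0$, hence $f_i\to A_i\in U_i$, so $f_i\in U_i$ for large $q$; and on expanding $f^{\gamma}$ and applying $T^{q}$, each resulting term acquires a factor $\phi(\sigma)^{q}$ with $\sigma$ a sum of boundedly many of the chosen eigenparameters, of size $\bigo(\theta^{q})$ where $\theta=|\phi(\sigma)|/\prod_k|\phi(\mu_k)|^{e_k}$ with $e_k\ge0$, so one only needs $\theta<1$. That holds in each of the two relevant cases: if $\sigma$ involves an eigenparameter from some $A_i$, then by $(\max|\gamma|)\delta<1$ and the smallness of the $\lambda_{il}$ the sum $\sigma$ stays on the side of $0$ where $|\phi|<1$ while $\prod_k|\phi(\mu_k)|^{e_k}\ge1$; if $\sigma$ comes only from the $R_i$ and $\gamma\prec\beta$, then (using that the scales are separated per variable) $\sigma$ is a nontrivial convex combination of $0$ and the $\mu_k$-points, so strict convexity of $\log|\phi|$ together with $|\phi(0)|=1$ gives $\theta<1$, exactly as in Cases~1 and~3 of the proof of Theorem~\ref{T:2}. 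The same dichotomy yields $T^{q}(f^{\beta})\to B\in V$.

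I expect the main obstacle to be twofold. First, the bookkeeping of scales: one must fix the magnitudes in the right order --- $\delta$ first, then the $J^{-}$-eigenparameters of the $A_i$, then the $J^{+}$-eigenparameters at per-variable scales compatible with $\prec$ --- and do this uniformly enough that the dense open sets obtained survive the countable intersections of the first two steps; in particular one must check that every monomial $\gamma\prec\beta$, including those of the same total degree as $\beta$, really is pushed either into the region $|\phi|<1$ or off the simplex spanned by the $\mu_k$. Second, verifying that the same perturbation simultaneously keeps the $f_i$ algebraically independent and $\mathcal A(f)$ dense --- this is routine given (c), which forbids any nontrivial polynomial relation among finitely many $A_i+R_i$ from holding on an open set, but it has to be woven into the Baire argument.
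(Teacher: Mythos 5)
Your route diverges from the paper's at exactly the point where the construction must distinguish, among terms of $T^q(f^\gamma)$, those whose eigenvalue factor $\Theta_{u,v}^q$ cannot by itself guarantee decay. The paper, after fixing $\beta\in A_1=\{\alpha\in A:\alpha_1=m\}$ via Lemma~\ref{L:BP:3.1}, perturbs \emph{only} $f_1$ by the eigenfunction sum $R_n$ and perturbs $f_2,\dots,f_N$ by the \emph{scalars} $1/n^{k_i}$; the resulting factor $n^{\sum_s(\frac{|v|}{m}k_s\beta_s-k_s\ell_s)}$ in~$\eqref{eq:4.8}$ supplies polynomial decay precisely in the degenerate Case~4 where $\Theta_{u,v}=1$, and the choice of $\beta$ as the minimizer of $\sum k_i x_i$ over $A_1$ makes this factor tend to $0$ via~$\eqref{eq:a1}$. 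Your proposal has no counterpart to this mechanism: you place eigenfunction perturbations $R_i$ on \emph{every} $i\in\operatorname{supp}(\beta)$ and rely entirely on exponential decay with $\theta<1$, using scale separation of the $\lambda_{il}$.

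The gap is in the purely-$R_i$ terms with $\gamma\prec\beta$. You assert that, because $\sigma=\sum\gamma_i\lambda_i$ is a nontrivial convex combination of $0$ and the $\mu_k$'s, strict convexity of $\log|\phi|$ gives $\theta<1$ where $\theta=|\phi(\sigma)|/\prod_k|\phi(\mu_k)|^{e_k}$. But the $e_k$ are dictated by the decay rates of the $c_{il}(q)$, and the vertex equation only constrains the product $\prod_i c_i^{\beta_i}$ (one equation in $|\operatorname{supp}(\beta)|$ unknowns). Strict convexity gives $\log|\phi(\sigma)|<\sum_k t_k\log|\phi(\mu_k)|$ where the $t_k$ are the \emph{barycentric} coordinates of $\sigma$; for $\theta<1$ you need $\sum_k e_k\log|\phi(\mu_k)|\ge\sum_k t_k\log|\phi(\mu_k)|$, and this fails unless the decay rates are chosen to match the geometry. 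Concretely, take $p=1$, $\beta=(2,1)$, $\gamma=(0,2)$, scale $s=(1,1.5)$ (which does satisfy $\gamma\cdot s<\beta\cdot s$), and decay rates $\rho=(1/3,1/3)$ (uniform, $\beta\cdot\rho=1$); then $e=\gamma\cdot\rho=2/3$ but $t=\gamma\cdot s/\beta\cdot s\approx 0.857$, and in the model $\log|\phi(tw)|\sim t^2$ one finds $\theta>1$. The fix is to take $\rho_i=s_i/(\beta\cdot s)$ so that $e=t$ exactly, after which the strict-convexity bound closes. This is a genuine missing ingredient, not mere bookkeeping: your argument as written would fail, and a correct implementation must make this $\rho\leftrightarrow s$ alignment explicit (and verify $\rho_i>0$, which it does since $s_i>0$). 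The further point that the weight vector $s>0$ realizing the monomial order $\prec$ on the finite set $A$ exists is true, but it must be invoked; otherwise the claim that $\sigma$ lies in the simplex spanned by $0$ and the $\mu_k$'s is unjustified. Finally, the $p>1$ case, where several $R_i$ each carry several eigenparameters, multiplies the vertex bookkeeping and is only sketched; the paper keeps this tractable by attaching eigenparameters only to $f_1$.
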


Taking $X=H(\mathbb{C})$, $\Phi=\phi$,   $T=\Phi(D)$, and $E(\lambda)(z):=e^{\lambda z}$ $(\lambda, z\in\mathbb{C})$ in Theorem~\ref{T:2bisbis} we have the following corollary.
\begin{corollary}\label{C:Phi'(0)}
Let $\Phi$ be of exponential type satisfying $|\Phi(0)|=1$, $\Phi''(0)\Phi (0) \ne \Phi'(0)^2$, and $\Phi'(0)\ne 0$.
Then the set of hypercyclic vectors for $\Phi (D)$ is densely strongly algebrable.
\end{corollary}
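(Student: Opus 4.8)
The plan is to obtain Corollary~\ref{C:Phi'(0)} as the announced instance of Theorem~\ref{T:2bisbis}, applied with $X=H(\C)$ under the compact-open topology, $T=\Phi(D)$, $\phi=\Phi$, and $E\colon\C\to H(\C)$ defined by $E(\lambda)(z)=e^{\lambda z}$. Since $H(\C)$ is a separable commutative $F$-algebra, the work reduces to exhibiting a dense freely generated subalgebra of $H(\C)$ and to verifying hypotheses (a)--(d) of Theorem~\ref{T:2bisbis} for this data; hypothesis (d) is then nothing but the three conditions assumed on $\Phi$.

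For the dense freely generated subalgebra I would fix a sequence $(\alpha_n)_{n\ge 1}$ of strictly positive reals that is linearly independent over $\mathbb{Q}$ with $\sum_n\alpha_n<\infty$, and take the generators $g_n:=E(\alpha_n)$. The $\mathbb{Q}$-independence makes distinct finitely supported multi-indices $k=(k_n)\in\N_0^{(\N)}$ yield distinct exponents $\sum_n k_n\alpha_n$, and since exponential functions with pairwise distinct exponents are linearly independent in $H(\C)$, the family $(g_n)_n$ is algebraically independent over $\C$. The subalgebra it generates equals the linear span of $\{E(\lambda):\lambda\in\Lambda\}$, where $\Lambda$ is the set of all finite $\N_0$-combinations of the $\alpha_n$; as $\sum_n\alpha_n<\infty$, the partial sums $\sum_{n\le m}\alpha_n$ form a strictly increasing sequence in $\Lambda$ converging in $\C$, so $\Lambda$ has an accumulation point and, by hypothesis (c) verified below, this span is dense.

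It remains to check (a)--(c). Condition (a) holds because $e^{\lambda z}$ is an eigenvector of $D$ for the eigenvalue $\lambda$, whence $\Phi(D)E(\lambda)=\Phi(\lambda)E(\lambda)=\phi(\lambda)E(\lambda)$; condition (b) is the identity $E(\lambda)E(\mu)=e^{\lambda z}e^{\mu z}=e^{(\lambda+\mu)z}=E(\lambda+\mu)$. For (c), let $\Lambda\subseteq\C$ have an accumulation point and suppose $u\in H(\C)'$ annihilates every $E(\lambda)$ with $\lambda\in\Lambda$; then the Fourier--Borel transform $\lambda\mapsto u\bigl(E(\lambda)\bigr)$ is an entire function vanishing on $\Lambda$, hence vanishes identically, so $u$ annihilates every exponential $E(\lambda)$, $\lambda\in\C$. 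Since the exponentials already have dense linear span in $H(\C)$, $u=0$, and the Hahn--Banach theorem gives that $\{E(\lambda):\lambda\in\Lambda\}$ has dense span. Thus all hypotheses of Theorem~\ref{T:2bisbis} hold, and the theorem produces a dense subalgebra of $H(\C)$, freely generated by a generic element of $H(\C)^\N$, all of whose nonzero elements are hypercyclic for $\Phi(D)$ --- that is, the set of hypercyclic vectors of $\Phi(D)$ is densely strongly algebrable in the sense recalled after Theorem~\ref{T:IG}.

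There is no genuinely hard step here, the substance being carried by Theorem~\ref{T:2bisbis}; the only points requiring a moment's thought are the two classical facts used above, namely that exponentials with exponents accumulating in $\C$ span a dense subspace of $H(\C)$ (the backbone of Godefroy and Shapiro's study of $\Phi(D)$) and that this yields a dense freely generated subalgebra.
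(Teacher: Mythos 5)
Your proof is correct and follows the paper's approach exactly: apply Theorem~\ref{T:2bisbis} with $X=H(\C)$, $T=\Phi(D)$, $\phi=\Phi$, and $E(\lambda)(z)=e^{\lambda z}$, after which hypothesis (d) is precisely the assumption on $\Phi$. The paper states this specialization without verifying hypotheses (a)--(c) or the existence of a dense freely generated subalgebra of $H(\C)$; you supply those standard checks (eigenvector relation, semigroup property of $E$, density via the Fourier--Borel transform and Hahn--Banach, and the explicit free generating set built from $\mathbb{Q}$-independent summable exponents) correctly and in more detail.
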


\begin{theorem}\label{T:7bisbis}
Let $X$ be a separable commutative $F$-algebra that supports a dense freely generated subalgebra, and let $T$ be an operator on $X$. Suppose there exist a function $E:\mathbb{C}\to X$ and an entire function $\phi:\mathbb{C}\to\mathbb{C}$  with $|\phi(0)|=1$ satisfying the following:
\begin{enumerate}
\item[{\rm (a)}] \ For each $\lambda\in\mathbb{C}$, $TE(\lambda)=\phi(\lambda) E(\lambda)$,

\item[{\rm (b)}]\ For each $\lambda, \mu\in\mathbb{C}$, $E(\lambda )E(\mu)= E(\lambda+\mu)$,

\item[{\rm (c)}]\ For each subset $\Lambda$ of $\mathbb{C}$ supporting an accumulation point in $\mathbb{C}$, the set $\{ E(\lambda):\ \lambda\in\Lambda \}$ has dense linear span in $X$, and

\item[{\rm (d)}]\ The integer
$
\mbox{min}\{ n\in\mathbb{N}:   \phi^{(n)}(0)\ne 0  \}
$
is odd and  $\phi$ supports an angle $\theta\in [0,2\pi)$ and positive scalars  $r,R$ so that its Phragm\'{e}n-Lindel\"{o}f indicator function  $h_\phi(\theta):=\limsup_{t\to\infty} \log |\phi (te^{i\theta})|^{\frac{1}{t}}$ satisfies
\[
\begin{cases}
\ |\phi (t e^{i\theta})| <1  &\mbox{ for $0<t<r$, and }\\
|\phi (R e^{i\theta})| > \mbox{max}\{ 1, e^{h_\phi(\theta) R} \}.
\end{cases}
\]
\end{enumerate}
Then the generic element $f=(f_n)_{n=1}^\infty$ of $X^\mathbb{N}$ freely generates a dense subalgebra of $X$ consisting entirely (but zero) of hypercyclic vectors for $T$.
\end{theorem}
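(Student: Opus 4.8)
The plan is to upgrade the proof of Theorem~\ref{T:ma^{m-1}} in exactly the way the proof of Theorem~\ref{T:2bisbis} upgrades that of Theorem~\ref{T:2}. As in the proof of Theorem~\ref{T:2bisbis}, the conditions that the generic $f=(f_n)_{n=1}^{\infty}\in X^{\mathbb N}$ have algebraically free coordinates and that the $f_n$ generate a dense subalgebra of $X$ are themselves generic --- the second one because $X$ is assumed to carry a dense freely generated subalgebra --- so the content is that the generic such $f$ has every $P(f_1,\dots,f_d)$ with $P\in\mathbb{C}[X_1,\dots,X_d]$, $P\ne 0$, $P(0)=0$, a hypercyclic vector for $T$. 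First I would invoke the Baire-category criterion for dense, freely generated hypercyclic algebras (the infinitely-generated analogue of Lemma~\ref{L:0}; cf.\ \cite{BP_algebrable}) to reduce this to the following finitary statement: for every $d$, every $m\ge 2$, every nonempty open $U_1,\dots,U_d,V\subseteq X$, every neighbourhood $W$ of $0$, and every $P=\sum_\alpha c_\alpha X^\alpha$ of degree $\le m$ with $P(0)=0$ and a distinguished index $\alpha^{(0)}$ with $c_{\alpha^{(0)}}\ne 0$, there exist $f_j\in U_j$ and $q\in\mathbb{N}$ with $T^q\big(P(f_1,\dots,f_d)\big)\in V$ and $T^q\big(f_1^{k_1}\cdots f_d^{k_d}\big)\in W$ for every multi-index $(k_1,\dots,k_d)\ne\alpha^{(0)}$ with $1\le k_1+\dots+k_d\le m$.

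Next I would extract the geometric data from hypothesis (d). The Phragm\'en--Lindel\"of part of (d) is precisely the hypothesis of Theorem~\ref{T:ma^{m-1}} along the ray $e^{i\theta}$, so, after the scaling normalisation used in the proof of Corollary~\ref{C:7.1} and running the argument of Theorem~\ref{T:ma^{m-1}} through Theorem~\ref{T:bayart}, I obtain for each $m$ real scalars $z_0\in(0,w_0)$ with $|\phi|<1$ on $(0,z_0]$, with $|\phi(w_0)|>1$, with $|\phi(dw_0)|<|\phi(w_0)|^d$ for $d=2,\dots,m$, and with $t\mapsto|\phi(w_0+tz_0)|$ strictly increasing on a \emph{two-sided} interval $(-\eta,\eta)$; this is the stronger form of $(c')$ flagged in the footnote to the proof of Theorem~\ref{T:ma^{m-1}} as being needed here. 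In addition, the oddness of $N:=\min\{n\in\mathbb{N}:\phi^{(n)}(0)\ne 0\}$ forces $\log|\phi|$ to be \emph{strictly monotone through the origin} along the line $\mathbb{R}e^{i\theta}$ (so $|\phi|<1$ just to one side of $0$ and $|\phi|>1$ just to the other, with no local extremum at $0$), which is impossible when $N$ is even; this is the substitute for the two-sided conclusion of Lemma~\ref{L:1}(b), i.e.\ it plays exactly the role that $\phi'(0)\ne 0$ (the case $N=1$) plays in Theorem~\ref{T:2bisbis}. A suitably generalised version of Lemma~\ref{L:1} yielding this, valid for odd $N\ge 1$, is the one place where the proof is genuinely more technical than that of Theorem~\ref{T:2bisbis}, since for $N\ge 3$ one no longer has convexity through $0$, only monotonicity through $0$ and convexity on each side.

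With these data the construction follows the pattern of the proof of Theorem~\ref{T:2} (and of Theorem~\ref{T:2bisbis}). Fixing $d,m,P,U_j,V,W$, I would pick approximants $A_j=\sum_i a_{j,i}E(\gamma_{j,i})\in U_j$ with all exponents $\gamma_{j,i}$ in a neighbourhood of $0$ so small that every sum of at most $m$ of them lies in $\{|\phi|<1\}$, and a target $B=\sum_i b_iE(\beta_i)\in V$ whose exponents $\beta_i$ cluster near the point that the distinguished monomial $X^{\alpha^{(0)}}$ must hit (a point close to $w_0$). Then I set $f_j:=A_j+R_{j,N}$, where $R_{j,N}$ is a short sum of the $E(\lambda)$ whose coefficients $c_{j,i}(N)$ are chosen, by solving equations of the shape $b_i=(\text{multinomial factor})\cdot\prod_j c_{j,i}(N)^{(\alpha^{(0)})_j}\cdot\phi(\cdot)^N$, so that $T^N$ applied to the distinguished monomial of $P(f_1,\dots,f_d)$ equals $B$ plus a remainder; because the $\phi$-values appearing here have modulus $>1$, the $c_{j,i}(N)$ tend to $0$ and hence $f_j\in U_j$ for $N$ large. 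It then remains to expand $T^N\big(P(f_1,\dots,f_d)\big)$ and each $T^N\big(f_1^{k_1}\cdots f_d^{k_d}\big)$ into sums $\sum S(\cdot,N)E(\cdot)$ and to check that every term other than $B$ tends to $0$; this is done by bounding the relevant ratios (a modulus of $\phi$ at some eigenvalue divided by a product of powers $|\phi(m\lambda_i)|^{v_i/m}$) by $1$, splitting into the same cases as in the proof of Theorem~\ref{T:2}: the ``pure target'' combinations by strict convexity of $\log|\phi|$ on the relevant segment, the combinations involving the filler exponents by $|\phi|<1$ near $0$, and the intermediate combinations by strict convexity together with $|\phi(dw_0)|<|\phi(w_0)|^d$ --- the two-sided geometric data of the previous step supplying the control for those combinations whose eigenvalues fall on the ``wrong'' side of $0$ or of $w_0$.

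The main obstacle is the combinatorial--analytic bookkeeping for a \emph{general} multivariate polynomial: the cross terms of $T^N\big(P(f_1,\dots,f_d)\big)$ carry eigenvalues ranging over all the convex combinations of $0$ and the target points, and these need not stay on the one-sided segment on which $\log|\phi|$ was controlled in the single-generator argument --- this is exactly why the two-sided geometric step, and hence the oddness hypothesis, is indispensable, and making these estimates uniform over the (finitely many) subordinate monomials is the delicate point. A secondary, more routine, difficulty is that $X$ is only an abstract $F$-algebra, so every approximation must be carried out through the eigenvector family $E(\lambda)$ and the density hypothesis (c), and one must verify that the convergences $c_{j,i}(N)\to 0$ and ``remainder $\to 0$'' can be made uniform enough to feed the Baire-category criterion.
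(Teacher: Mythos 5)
Your high-level architecture --- reduce via the Baire-category criterion for freely generated hypercyclic algebras of \cite{BP_algebrable}, run the proof of Theorem~\ref{T:ma^{m-1}} with the two-sided strengthening of $(c')$ to get geometric data $z_0,w_0$, then build $f_j=A_j+R_{j,N}$ and kill the subordinate monomials case by case --- is exactly the architecture of the paper's proof (which invokes Theorem~\ref{T:{fromBP_algebrable}}, Lemma~\ref{L:BP:3.1}, and the two-sided monotonicity at $w_0$ from the footnote to Theorem~\ref{T:ma^{m-1}}). There are also two concrete differences worth noting, and one of them is a gap.

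First, in the paper only $f_1=L_1+R_n$ carries a correction of exponential type; the coordinates $f_i$ with $i\ge 2$ are perturbed only by the small \emph{constant} $n^{-k_i}$, with the exponents $k_i$ chosen via Lemma~\ref{L:BP:3.1} so that the functional $x\mapsto\sum k_ix_i$ separates the multi-indices of $A$ and produces the polynomial factors $n^{\sum(\frac{|v|}{m}k_s\beta_s-k_s\ell_s)}$ that absorb the borderline terms with $\Theta_{u,v}=1$. Your sketch, which hands each coordinate a full correction $R_{j,N}$ with coefficients solving equations of the shape $b_i=(\cdots)\prod_j c_{j,i}(N)^{(\alpha^{(0)})_j}\phi(\cdot)^N$, is a genuinely different parametrization; it is not clearly wrong, but it also omits the $n^{-k_i}$ device and hence does not explain how you will get the polynomial decay that the paper's proof relies on to handle the multi-indices $\alpha\ne\beta$ with $\alpha_1=m$ (the case $\Theta_{u,v}=1$).

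Second, and more importantly, your reading of hypothesis~(d) is different from the paper's. You assert that oddness of $N$ forces $\log|\phi|$ to be strictly monotone through the origin along $\mathbb{R}e^{i\theta}$, so $|\phi|<1$ just to one side of $0$ and $|\phi|>1$ just to the other. The paper's proof instead asserts, as its condition~(i), that $|\phi|<1$ on both punctured half-segments $[-z_0,0)\cup(0,z_0]$, and it genuinely uses $|\phi|<1$ on $[-z_0,0)$: in Case~1, sub-case $|u_1|=0,\ |u|\neq 0$, the eigenvalue $u\cdot\gamma$ lands in $[-z_0,0)$ and the estimate $\Theta_{u,v}<1$ is read off precisely from $[-z_0,0)\subset\phi^{-1}(\mathbb{D})$, with no polynomial factor available (since all the $\ell_s$ may vanish there). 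These two geometric claims are incompatible: if $\log|\phi(te^{i\theta})|$ is strictly monotone through $t=0$ and $|\phi(0)|=1$, then $|\phi|>1$ on one of the two punctured half-segments, not $<1$ on both. Your subsequent assurance that the two-sided monotonicity supplies control for combinations whose eigenvalue falls on the ``wrong'' side of $0$ is therefore the crux that is not carried out, and I do not see how it can be: putting all filler exponents $\gamma_{i,j}$ ($i\ge 2$) on the side where $|\phi|<1$ restores Case~1, but then in Case~3 (when $u_{1,1}=m-1$ and some $|u_i|>0$, $i\ge 2$) the eigenvalue becomes $\lambda_k+(m-1+\varepsilon)\gamma_1$ with $\varepsilon>0$ small, monotonicity gives $\Theta_{u,v}>1$, and when all $\ell_i$ vanish no polynomial factor saves the estimate. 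Conversely, placing the $\gamma_{i,j}$ on the side where $|\phi|>1$ (as the paper's eq.~$\eqref{eq:L***}$ does, taking $\gamma_{i,j}\in[-z_0,0)$) repairs Case~3 but then Case~1 needs $|\phi|<1$ there, which your own reading of oddness rules out. You have correctly identified where the difficulty sits --- the cross terms straddling $0$ --- but the proposal does not yet contain the idea that resolves it, so at present there is a real gap at exactly the point you flagged as delicate.

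Minor point: your statement that for odd $N\ge 3$ one has ``monotonicity through $0$ and convexity on each side'' is not quite right; with $\log|\phi(te^{i\theta})|\approx ct^N$, $c<0$, $N$ odd $\ge 3$, one side is convex and the other is concave, so the substitute for Lemma~\ref{L:1}(b) has to be phrased more carefully than ``convex on each side.''
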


We postpone the technical proofs of Theorem~\ref{T:2bisbis} and Theorem~\ref{T:7bisbis} to the next subsections and see first how to derive Theorem~\ref{T:IG} from them.
\begin{proof}[Proof of Theorem~\ref{T:IG}]
Case (a) follows from Theorem~\ref{T:7bisbis} and the proof of Corollary~\ref{C:7.3}. The remaining cases follow from Corollary~\ref{C:Phi'(0)}: By the Le\'on-M\"{u}ller Theorem we may assume in such cases that $b=0$. In Case (b) we have $\Phi(0)=1$, $\Phi''(0)\Phi(0)-\Phi'(0)^2=2a_2-a_1^2\ne 0$, and $\Phi'(0)=a_1+a\ne 0$. For Case (c) we have 
$
\Phi(z)=e^z \, P(z)
$
 where $P(z)=\prod_{n=1}^\infty E_p(\frac{z}{z_n})$ with $p\in \{0, 1\}$.  $\Phi(0)=1$ and $\Phi''(0)\Phi(0)\ne \Phi'(0)^2$ thanks to the assumption $\sum_{n=1}^\infty z_n^{-2}\ne 0$.  So it suffices to verify that $\Phi'(0)\ne 0$, and this holds precisely when $P'(0)+a \ne 0$. We consider two sub-cases: If $\sum_{n=1}^\infty |z_n|^{-1}$ converges, then $p=0$ and $P'(0) =-\sum_{n=1}^\infty z_n^{-1}\ne -a$ by our assumption. If $\sum_{n=1}^\infty |z_n|^{-1}$ diverges, then $p=1$ and $P'(0)=0\ne -a$ by our assumption on $a$. 
 \end{proof}

\subsection{Proof of Theorem~\ref{T:2bisbis}}

We use the following fact established with \cite[Proposition~{2.4}, Lemma~{2.10} and Lemma~{3.1}]{BP_algebrable}:

\begin{theorem} \label{T:{fromBP_algebrable}} {\rm (B.,Papathanasiou~\cite{BP_algebrable})}  
Let $X$ be a separable commutative $F$-algebra that supports a dense freely generated subalgebra, and let $T$ be an operator on $X$. Suppose that for each integer $N\ge 2$ and
each non-empty finite subset $A$ of $\N_0^N$ not containing the zero $N$-tuple there exists $\beta\in A$ satisfying:  
\begin{quote}
$(\ast)$ ``For each non-empty open subsets $U_1,\dots, U_N, V$ and $W$ of $X$ with $0\in W$ there exist
 $f\in U_1\times\dots\times U_N$ and $n\in\N$ so that $T^n(f^\beta)\in V$ and $
T^n(f^\alpha)\in W$  for each $\alpha\in A\setminus \{ \beta \}$''. 
\end{quote}
Then the generic element $f=(f_n)_{n=1}^\infty$ of $X^\mathbb{N}$  freely generates a dense subalgebra of $X$ consisting entirely {\rm (}but zero{\rm )} of hypercyclic vectors for $T$.
\end{theorem}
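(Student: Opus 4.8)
The plan is to run a Baire category argument in the separable $F$-space $X^{\N}$ with the product topology, which is a Baire space, so it suffices to exhibit a residual subset of $X^{\N}$ all of whose elements $f=(f_n)_{n=1}^\infty$ both (i) freely generate a \emph{dense} (non-unital) subalgebra $\mathcal A(f)$ of $X$ and (ii) are such that every nonzero element of $\mathcal A(f)$ is hypercyclic for $T$. I would obtain this as $\mathcal F\cap\mathcal H$, where $\mathcal F$ collects the tuples satisfying (i) and $\mathcal H$ the tuples satisfying (ii). The point of the reduction is that when $f$ is free the nonzero elements of $\mathcal A(f)$ are precisely the vectors $P(f):=P(f_1,\dots,f_N)$, where $N\in\N$ and $P\in\C[X_1,\dots,X_N]$ is a nonzero polynomial with $P(0)=0$; grouping such $P$ by the finite support $A\subseteq\N_0^N\setminus\{0\}$ of its nonzero coefficients, it is enough to control, for each such $A$ and with $\beta=\beta(A)$ the index furnished by $(\ast)$, the sums $\sum_{\alpha\in A}c_\alpha f^\alpha$ for all coefficient vectors with $c_\beta\ne 0$ (note $\beta\in A$, so this forces nothing on the other $c_\alpha$'s).

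For the genericity of $\mathcal F$ I would argue as follows — this is essentially \cite[Proposition~2.4]{BP_algebrable}, and is the only place the standing hypothesis on $X$ enters. Fix a countable basis $(V_j)_{j}$ of nonempty open subsets of $X$. Density of $\mathcal A(f)$ means $f\in\bigcap_j\{g:\ P(g)\in V_j\text{ for some }P\text{ with }P(0)=0\}$, an intersection of open sets, each of which is dense because any point of $X$ can be perturbed into the span of the given dense freely generated subalgebra. Freeness of $f$ means $f\in\bigcap_{N,B}\{g:\ (g^\beta)_{\beta\in B}\text{ is linearly independent in }X\}$ over the countably many finite $B\subseteq\N_0^N\setminus\{0\}$, and each such set is open (linear dependence of a finite tuple is a closed condition: normalise the dependence coefficients to the unit sphere and pass to a limit) and dense (given a box, perturb the $i$-th coordinate towards $a_i+t e_i$ with the $e_i$ fresh free generators of the ambient dense free subalgebra, and derive a contradiction by isolating a maximal-monomial term). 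Hence $\mathcal F$ is residual.

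The core step is the choice of $\mathcal H$. For each $N\ge 2$, each nonempty finite $A\subseteq\N_0^N\setminus\{0\}$ with its distinguished index $\beta=\beta(A)$, and each $j,m\in\N$, put
\[
\Omega_{N,A,j,m}:=\bigl\{g\in X^{\N}:\ \exists\,n\in\N\text{ with }T^n(g^\beta)\in V_j\text{ and }T^n(g^\alpha)\in B(0,1/m)\text{ for all }\alpha\in A\setminus\{\beta\}\bigr\},
\]
where $B(0,1/m)$ is the $1/m$-ball at $0$ for an $F$-metric on $X$. Each $\Omega_{N,A,j,m}$ is open, since $g\mapsto\bigl(T^n g^\beta,(T^n g^\alpha)_{\alpha}\bigr)$ is continuous (multiplication in $X$ and $T$ are continuous) and the defining condition is a finite intersection of open conditions followed by a union over $n$; and it is dense, because given a nonempty basic open box in $X^{\N}$ whose first $N$ coordinate sets are $U_1,\dots,U_N$, applying $(\ast)$ to these $U_i$ with $V:=V_j$ and $W:=B(0,1/m)$ (which contains $0$) lands one simultaneously inside the box and inside $\Omega_{N,A,j,m}$. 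Since $(N,A,j,m)$ runs over a countable index set, $\mathcal H:=\bigcap_{N,A,j,m}\Omega_{N,A,j,m}$ is residual, and hence so is $\mathcal F\cap\mathcal H$.

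Finally I would verify that any $f\in\mathcal F\cap\mathcal H$ does the job. Fix a nonzero $P$ with $P(0)=0$; write $P(f)=\sum_{\alpha\in A}c_\alpha f^\alpha$ with $A$ finite in $\N_0^N\setminus\{0\}$ (enlarge $N$ to be $\ge2$ if needed), each $c_\alpha\ne0$, and let $\beta=\beta(A)$. Because a vector $x$ is hypercyclic for the continuous linear $T$ exactly when $\lambda x$ is, for every $\lambda\in\C\setminus\{0\}$ (as $T^n(\lambda x)=\lambda T^n x$ and scalar multiplication is a homeomorphism), we may rescale and assume $c_\beta=1$. Now given any nonempty open $V'\ni w$ in $X$, joint continuity of $(v,(v_\alpha)_{\alpha\ne\beta})\mapsto v+\sum_{\alpha\ne\beta}c_\alpha v_\alpha$ at $(w,0,\dots,0)$ yields an index $j$ with $w\in V_j$ and an $m$ with $V_j+\sum_{\alpha\ne\beta}c_\alpha B(0,1/m)\subseteq V'$; since $f\in\Omega_{N,A,j,m}$ there is $n$ with $T^n f^\beta\in V_j$ and $T^n f^\alpha\in B(0,1/m)$ for $\alpha\ne\beta$, so $T^n(P(f))=T^n f^\beta+\sum_{\alpha\ne\beta}c_\alpha T^n f^\alpha\in V'$; as $V'$ was arbitrary, $P(f)$ is hypercyclic, and so $\mathcal A(f)$ is a dense freely generated subalgebra all of whose nonzero elements are hypercyclic. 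The step I expect to be the main obstacle is exactly the design of $\mathcal H$ so that one fixed residual set serves the \emph{uncountable} family of coefficient vectors — this is why $(\ast)$ must isolate $\beta=\beta(A)$ independently of the coefficients, and it is handled by the normalisation $c_\beta=1$ together with letting $j$ range over all basic open sets (making the $\beta$-term dense) and $m$ over all of $\N$ (so that continuity of the algebra operations and of $T$ absorbs $\sum_{\alpha\ne\beta}c_\alpha T^n f^\alpha$ as an arbitrarily small perturbation, whatever the $c_\alpha$); the secondary delicate point, the genericity of $\mathcal F$ and in particular that each linear-dependence locus is nowhere dense, is where the hypothesis that $X$ carries a dense freely generated subalgebra is essential, and I would import it from \cite[Proposition~2.4]{BP_algebrable}.
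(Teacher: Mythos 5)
Your argument is correct: the Baire-category scheme in $X^{\N}$ — genericity of tuples freely generating a dense subalgebra, plus the countable family of open dense sets $\Omega_{N,A,j,m}$ whose density comes straight from $(\ast)$ and which, after normalising $c_\beta=1$ and absorbing the remaining terms $\sum_{\alpha\ne\beta}c_\alpha T^n f^\alpha$ as small perturbations, covers every coefficient vector at once because $\beta$ depends only on the support $A$ — is exactly the intended proof. The paper itself offers no proof of this statement but cites B\`es--Papathanasiou~\cite{BP_algebrable} (their Proposition~2.4, Lemma~2.10 and Lemma~3.1), and your proposal reconstructs that route, legitimately importing only the algebraic genericity step (Proposition~2.4), just as the paper does.
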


\begin{lemma}  {\rm (\cite[Lemma~{3.1}]{BP_algebrable}) } \label{L:BP:3.1}
Let  $A$ be a finite non-empty subset of $\mathbb{N}_0^N$, where $N\ge 2$.
Then there exist positive scalars $k_i$ 
 $(i=1,\dots, N)$   so that the functional
\[
(x_i)_{i=1 }^N
\mapsto \sum_{i=1}^N k_i x_i
\]
is injective on $A$. 
\end{lemma}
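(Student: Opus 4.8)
The plan is to exploit the finiteness of $A$: only finitely many ``collisions'' have to be ruled out, each of which forbids a single hyperplane's worth of coefficient vectors $(k_1,\dots,k_N)$, and a finite union of hyperplanes cannot fill up the open positive orthant of $\R^N$.

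Concretely, I would first list the obstructions. For each pair of distinct points $a,b\in A$ the difference $a-b$ is a nonzero vector of $\Z^N$, so
\[
H_{a,b}:=\{\,k\in\R^N:\ \textstyle\sum_{i=1}^N k_i\,(a_i-b_i)=0\,\}
\]
is a proper linear hyperplane; since $A$ is finite there are only finitely many such $H_{a,b}$. Next I would note that a finite union of proper hyperplanes of $\R^N$ has empty interior (each has Lebesgue measure zero, or invoke Baire), so it cannot contain the nonempty open set $(0,\infty)^N$. Hence one may pick $k=(k_1,\dots,k_N)\in(0,\infty)^N$ lying in none of the $H_{a,b}$. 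Finally I would check that this $k$ works: the $k_i$ are positive by construction, and for distinct $a,b\in A$ one has $\sum_{i=1}^N k_i a_i-\sum_{i=1}^N k_i b_i\ne 0$ because $k\notin H_{a,b}$, so the functional $(x_i)_{i=1}^N\mapsto\sum_{i=1}^N k_i x_i$ is injective on $A$.

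I do not expect a real obstacle, as the statement is elementary; the only point needing care is making explicit use of the hypothesis that $A$ is finite. If an explicit choice is preferred over the genericity argument, one can instead fix an integer $b$ strictly larger than every coordinate of every point of $A$ (possible precisely because $A$ is finite) and set $k_i:=b^{\,i-1}$; then for $x,y\in A$ the equality $\sum_{i=1}^N x_i b^{\,i-1}=\sum_{i=1}^N y_i b^{\,i-1}$ forces $x=y$ by uniqueness of the base-$b$ expansion of a nonnegative integer whose digits lie in $\{0,1,\dots,b-1\}$.
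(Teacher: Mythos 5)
Your proof is correct; note, however, that the paper does not prove Lemma~\ref{L:BP:3.1} itself but simply cites it from~\cite{BP_algebrable}, so there is no in-paper argument to compare against. Both of the routes you sketch are sound: the genericity argument correctly observes that the set of ``bad'' coefficient vectors is a finite union of proper hyperplanes $H_{a,b}$ (proper because $a-b\ne 0$), which has Lebesgue measure zero and hence cannot cover the open positive orthant; and the explicit choice $k_i=b^{\,i-1}$ with $b$ exceeding every coordinate of every point of $A$ reduces injectivity to the uniqueness of base-$b$ expansions with digits in $\{0,\dots,b-1\}$. Of the two, the explicit base-$b$ construction is closer in spirit to how such lemmas are usually proved in this literature and has the minor advantage of producing a concrete $k$, while the hyperplane argument generalizes more readily (e.g.\ to infinite but countable $A$, where a Baire-category version of the same idea still applies).
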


\begin{proof}[Proof of Theorem~\ref{T:2bisbis}]
Let $N\ge 2$ and let $A$ be a non-empty finite subset of $\mathbb{N}_0^N$ not containing the zero $N$-tuple.  It suffices to show there exists some $\beta$ in $A$ satisfying condition $(\ast)$ of Theorem~\ref{T:{fromBP_algebrable}}. 
By Lemma~\ref{L:BP:3.1}  there exists $k\in (0,\infty)^N$ so that the functional
\[
\mathbb{R}^N\to \mathbb{R}, \ x\mapsto \sum_{i=1}^N k_i x_i,
\]
is injective on $A$.  Permuting the $k_i's$ if necessary, without loss of generality we may assume that the subset
\[
A_1:=\{ \alpha\in A:\ \alpha_{1}=m \}
\]
of $A$ is non-empty, where $m:=\max\{ |\alpha|_\infty: \ \alpha\in A \}$. Hence there exists $\beta$ in $A$ with $\beta_1=m$ at which the functional above attains its strict minimum over $A_1$. Equivalently,  each $\alpha$ in $A\setminus \{ \beta \}$ with $\alpha_1=m$ satisfies
\begin{equation}\label{eq:a1}
\sum_{i=2}^N k_i (\beta_i-\alpha_i) <0.
\end{equation}
Now, let $U_1,\dots, U_N, V$ and $W$ be non-empty open subsets of $X$ be given, with $0\in W$. It remains to show that there exists $f$ in $U_1\times\dots\times U_N$ and a positive integer $n$ so that 
\[
\begin{aligned}
T^n(f^\beta)&\in V     \\ 
T^n(f^\alpha)&\in W \ \mbox{  for each $\alpha\in A\setminus \{ \beta \}$.}
\end{aligned}
\]
By $(d)$, taking $w_0=0$ in Lemma~\ref{L:1}(b) there exists some $w\in \mathbb{C}\setminus \{ 0 \}$  so that
\[
G:[-1,1]\to \mathbb{R}, \ G(t)=\log |\phi (tw)|,
\]
is strictly convex and strictly increasing. In particular, since $|\phi(0)|=1$ we know that $G<0$ on $[-1,0)$ and $G>0$ on $(0,1]$. Now, let $d_A:=\mbox{max}\{ \sum_{i=1}^N \alpha_i:\ \ \alpha\in A \}$, and pick $a\in (0, \frac{1}{2 d_A})$ and 
$b\in (0, \frac{a}{2 d_A})$ and define
\[
\Lambda :=[ -2aw, -aw] \ \ \ \mbox{ and } \ \ \ \Gamma := [bw, 2bw].
\]
Then we have
\begin{equation} \label{eq:4.2}
\sum_{k=1}^s \Lambda \subset [-d_A 2aw, -aw] \subset \phi^{-1}(\mathbb{D}) \ \ \ \ (1\le s\le d_A)
\end{equation}
and
\begin{equation} \label{eq:4.3}
\begin{aligned}
\sum_{k=1}^s \Lambda +\sum_{k=1}^d \Gamma &\subset [-d_A 2aw, -aw] + [bw, d_A2bw] \\
&\subset [-d_A2a+b, -a+d_A2b]w \\
&\subset (-1, -a+d_A2b]w\\
&\subset (-1, 0) w \subset
\phi^{-1}(\mathbb{D}) \ \ \ \ (1\le s<d_A, \mbox{ and } 1\le d \le d_A).
\end{aligned}
\end{equation}
Now, since each of $\Lambda$ and $\Gamma$ has accumulation points in $\mathbb{C}$, condition $(c)$ ensures that there exist a positive integer $p$  and non-zero scalars $a_{i,j}, b_j, \lambda_{i,j}, \gamma_{j}$ with $\lambda_{i,j}\in \Lambda$, $\gamma_j\in \frac{1}{m}\Gamma$
$(1\le i\le N, \ 1\le j\le p)$ so that
\[
\begin{aligned}
B&:=\sum_{j=1}^p b_j E(m\gamma_j)\in V \\
L_i&:=\sum_{j=1}^p a_{i,j} E(\lambda_{i,j})\in U_i \ \ (1\le i\le N).
\end{aligned}
\]
Moreover, we may assume that $\gamma_1,\dots, \gamma_p$ are pairwise distinct.
Next, for each positive integer $n$ let
\[
R_n:= \sum_{j=1}^p c_j E(\gamma_j),
\]
where for each $(1\le j\le p)$ the scalar $c_j=c_j(n)$ is a solution of
\begin{equation} \label{eq:4.4}
z^m= b_j \, \frac{n^{\sum_{i=2}^N k_i\beta_i} }{ \phi (m\gamma_j )^n}.
\end{equation}
Notice that $c_j=c_j(n)\to 0$ as $n\to \infty$, since $|\phi (m\gamma_j)|>1$. So letting
\[
\begin{cases}
f_1=f_{1,n}:= L_1+R_n \\
f_i=f_{i,n}:= L_i+\frac{1}{n^{k_i}} \ \ \ (2\le i\le N),
\end{cases}
\]
we have that $f=(f_1,\dots f_N)\in U_1\times\dots\times U_N$ whenever $n$ is large enough.
Now, for each $\alpha$ in $A$ let $\mathcal{I}_\alpha$ denote the set of elements $(u, v, \ell)$
in $(\N_0^p)^N\times \N_0^p\times \N_0^{N-1}$ for which $|u_1|+|v|=\alpha_1$ and $|u_i|+\ell_i=\alpha_i$ $(2\le i\le N)$.
Then for each $\alpha$ in $A$ and each positive integer $n$ we have
\[
\begin{aligned}
T^n(f^\alpha)&= T^n( \prod_{i=1}^N f_i^{\alpha_i}) \\
&=\sum_{(u,v,\ell)\in \mathcal{I}_\alpha} X(u, v, \ell, n)\, E(\lambda \cdot u + \gamma \cdot v),
\end{aligned}
\]
where 
 each $X_\alpha(u,v,\ell, n)$ is given by
\begin{equation} \label{eq:3.12}
X_\alpha(u,v,\ell,n) = {\alpha_{1}\choose{ u_1 v}  } \prod_{i=2}^N {\alpha_i\choose{ u_i\, \ell_i }}   \ a^u c^v  \ 
 \frac{ (\phi (\lambda\cdot u + \gamma\cdot v))^n}{ n^{\sum_{s=2}^N k_s \ell_s}}, 
\end{equation}
where
\[
\begin{aligned}
\lambda\cdot u+\gamma\cdot v&= \sum_{i=1}^N\sum_{j=1}^p \lambda_{i,j} u_{i,j}  + \sum_{j=1}^p \gamma_j v_j,\\
a^u&=\prod_{ 1\le i \le N, \  1\le j \le p }
 a_{i,j}^{u_{i,j}}, \mbox{ and }\\ 
c^v&= \prod_{j=1}^p c_j^{v_j}.
\end{aligned}
\]

So it suffices to show that for each $\alpha\in A\setminus\{ \beta \}$
\begin{equation}  \label{eq:19}
X_\alpha (u, v, \ell, n)\underset{n\to\infty}{\to} 0 \ \ \ \ \ ((u,v,\ell)\in \mathcal{I}_\alpha)
\end{equation}
and that for each $(u,v,\ell)\in\mathcal{I}_\beta$
\begin{equation}  \label{eq:20}
\lim_{n\to\infty} X_\beta(u,v,\ell, n)=\begin{cases}
b_j     &
\mbox{ if $(u, v)\in \{ (0, m e_j): j=1,\dots, p\}$ }\\
0 &\mbox{ otherwise,}
\end{cases}
\end{equation}
where $\{ e_1,\dots ,e_p\}$ is the standard basis of $\mathbb{C}^p$.
Now, let $\alpha\in A$ and let $(u,v,\ell)\in\mathcal{I}_\alpha$ be given. Notice that by $\eqref{eq:4.4}$ and $\eqref{eq:3.12}$ we have
\begin{equation} \label{eq:4.8}
\left| X_\alpha(u,v,\ell, n)\right|  \le (\mbox{Constant}) \ n^{\sum_{s=2}^N (\frac{|v|}{m} k_s\beta_s - k_s\ell_s)} \
\Theta_{u,v}^n,
\end{equation}
where
\[
\Theta_{u,v}=\frac{ |\phi (\lambda\cdot u+ \gamma\cdot v)| }{ \prod_{j=1}^p   | \phi(m\gamma_j)|^{\frac{v_j}{m}} }.
\]

We consider four cases.
\vspace{.1in}

Case 1: $1\le |u|< d_A$.  \ By $\eqref{eq:4.2}$ and $\eqref{eq:4.3}$ we have \[\lambda\cdot u+\gamma\cdot v\in \sum_{k=1}^{|u|} \Lambda +\sum_{k=1}^{|v|} \Gamma \subset \phi^{-1}(\mathbb{D}).\] Since $|\phi (m\gamma_j)|>1$ for each $1\le j\le p$, we have $\Theta_{u,v}\in [0, 1)$ and by $\eqref{eq:4.8}$ we have $X_\alpha (u, v, \ell, n)\underset{n\to\infty}{\to} 0$.
\vspace{.1in}

Case 2: $|u|=d_A$. So $|v|=0$ in this case, and 
\[
\lambda\cdot u+\gamma\cdot v = \gamma\cdot v \in \sum_{k=1}^{d_A} \Lambda \in \phi^{-1}(\mathbb{D})
\]
by $\eqref{eq:4.2}$, and  arguing as in Case 1 we have
$
X_\alpha (u, v, \ell, n)\underset{n\to\infty}{\to} 0 
$.
\vspace{.1in}

Case 3: $|u|=0$ and $|v|_\infty < m$.   In this case we have $u_1=\dots = u_N=0$, and hence $|v|=\alpha_1<m$ and 
$\ell_i=\alpha_i$ for each $2\le i\le N$. Now, notice that
\[
\gamma \cdot v = 0 \frac{m-|v|}{m} + m \gamma_1 \frac{v_1}{m}+\dots + m\gamma_p \frac{v_p}{m}
\]
is a non-trivial convex combination of the points $0, m\gamma_1,\dots, m\gamma_p$ in $[0, w]$. Since $|\phi(0)|=1$, we have 
\[
\Theta_{u,v}= \frac{|\phi (\gamma\cdot v)|}{|\phi(0)|^\frac{m-|v|}{m} \, \prod_{j=1}^p |\phi(m\gamma_j)|^\frac{v_j}{m}} < 1
\]
thanks to the strict convexity of $t\mapsto \log |\phi(tw)|$ on $[-1,1]$, and again by $\eqref{eq:4.8}$ we have
$
X_\alpha (u, v, \ell, n)\underset{n\to\infty}{\to} 0
$.
\vspace{.1in}

Case 4:  $|u|=0$ and $|v|_\infty=m$.   Suppose first that $v\notin\{ me_1,\dots, me_p\}$. Then 
$\lambda\cdot u+\gamma\cdot v=\gamma\cdot v$ is a nontrivial convex combination of the points $m\gamma_1,\dots, m\gamma_p$ in $[0,w]$, and again $\Theta_{u,v}\in[0,1)$ thanks to the strict convexity of the map $t\mapsto \log|\phi(tw)|$ on $[0,1]$, so $
X_\alpha (u, v, \ell, n)\underset{n\to\infty}{\to} 0 
$ by $\eqref{eq:4.8}$.

Finally, suppose $v=me_j$ for some fixed $j\in\{1,\dots, p\}$. Here we have $\Theta_{u,v}=1$, $\alpha_1=m$, and $\ell_i=\alpha_i$ for each $i\in \{2,\dots, N\}$, as $u_1=\dots=u_N=0$.  So by  $\eqref{eq:4.4}$ and  $\eqref{eq:3.12}$ we have
\[
\begin{aligned}
X_\alpha(u,v,\ell,n) &= {\alpha_{1}\choose{ u_1 v}  } \prod_{i=2}^N {\alpha_i\choose{ u_i\, \ell_i }}   \ a^u c^v  \ 
 \frac{ (\phi (\lambda\cdot u + \gamma\cdot v))^n}{ n^{\sum_{s=2}^N k_s \ell_s}} \\
 &= c_j^m \frac{\phi(m\gamma_j)^n}{n^{\sum_{s=2}^N k_s \alpha_s}} \\
 &= b_j   n^{\sum_{i=2}^N (k_i\beta_i-k_i\alpha_i)}.
 \end{aligned}
\]
So if $\alpha\ne \beta$ we have
\[
X_\alpha(u,v,\ell, n)= b_j   n^{\sum_{i=2}^N (k_i\beta_i-k_i\alpha_i)}\underset{n\to\infty}{\to} 0
\]
thanks to $\eqref{eq:a1}$, while if $\alpha=\beta$ we have \[
X_\alpha(u, v,\ell, n)=X_\beta(0, me_j, \ell, n)=b_j\underset{n\to\infty}{\to}  b_j.\]
That is, $\eqref{eq:19}$ and $\eqref{eq:20}$ are satisfied, and the proof is now complete.
\end{proof}

\subsection{Proof of Theorem~\ref{T:7bisbis}}

\begin{proof}[Proof of Theorem~\ref{T:7bisbis}]
Let $N\ge 2$ and let $A$ be a non-empty finite subset of $\mathbb{N}_0^N$ not containing the zero $N$-tuple. 
As in the proof of Theorem~\ref{T:2bisbis}, we may assume that
 the subset
\[
A_1:=\{ \alpha\in A:\ \alpha_{1}=m \}
\]
of $A$ is non-empty, where $m:=\max\{ |\alpha|_\infty: \ \alpha\in A \}$, and that there exist $k\in (0,\infty)^N$ and $\beta$ in $A_1$ so that
\begin{equation}\label{eq:a1*}
\sum_{i=2}^N k_i (\beta_i-\alpha_i) <0
\end{equation}
for each $\alpha\in A_1$.
Now, let $U_1,\dots, U_N, V$ and $W$ be non-empty open subsets of $X$ be given, with $0\in W$. By Theorem~\ref{T:{fromBP_algebrable}}, it suffices to show that there exist $f$ in $U_1\times\dots\times U_N$ and a positive integer $n$ so that 
\begin{equation} \label{eq:goal}
\begin{aligned}
T^n(f^\beta)&\in V     \\ 
T^n(f^\alpha)&\in W \ \mbox{  for each $\alpha\in A\setminus \{ \beta \}$.}
\end{aligned}
\end{equation}
By (d) and Remark~\ref{R:T:ma^{m-1}}, as in the proof of Theorem~\ref{T:ma^{m-1}}  we may get $w_0, z_0\in\mathbb{C}$ with $w_0\in \{ tz_0: t>1\}$ so that

\begin{quote} 
\begin{enumerate}
\item[{\rm (i)}]\  
$|\phi (w_0)|>1$ and  $|\phi|<1$ on $[-z_0,0)\cup (0,z_0]$, 

\item[{\rm (ii)}]\ $|\phi(dw_0)| < |\phi(w_0)|^d$ for each $d\in\{ 2,\dots, m\}$, and

\item[{\rm (iii)}]\ The map $t\mapsto |\phi (w_0+tz_0)|$ is strictly increasing on $(-1, 1)$.
\end{enumerate}
\end{quote}

Let $\gamma_1\in (0, \frac{z_0}{m})\cap D(0,1)$ close enough to zero so that 
\begin{quote}
\begin{enumerate}
\item[{\rm (i')}]\ \ \ $1< | \phi(w_0+(m-1)\gamma_1)|$,
\item[{\rm (ii')}]\ for each $d\in \{2,\dots, m\}$ and $s\in \{ 0,\dots, m-2\}$
\[
|\phi (dw_0+s\gamma_1)|^\frac{1}{d} < |\phi (w_0+(m-1)\gamma_1)|, \ \ \ \ \mbox{ and }
\]
\item[{\rm (iii')}]\ the function
 $t\mapsto |\phi(w_0+t\gamma_1)|$ is strictly increasing on the interval $(-d_A-1, d_A+1)$, where $d_A=\mbox{max}_{\alpha\in A}|\alpha|$.
\end{enumerate}
\end{quote}
By $(i')$ and $(ii')$, there exists $\delta\in (0,1)$ small enough so that
\begin{quote}
\begin{enumerate}
\item[(1)]\ $1<|\phi (\lambda +(m-1)\gamma_1)|$ for each $\lambda\in D(w_0, \delta)$, and

\item[(2)]\ for each $\lambda, \lambda_1',\dots, \lambda_d'$ in $D(w_0,\delta)$ and $z$ in $D(0, \delta)$ and each $(d,s)\in \{ 2,\dots, m\}\times\{ 0,\dots, m-2\}$,
\[
|\phi (\lambda_1'+\cdots+\lambda_d'+s\gamma_1+z)|^\frac{1}{d}< |\phi(\lambda +(m-1)\gamma_1)|.
\]
\end{enumerate}
\end{quote}
Reducing $\delta>0$ if necessary, by $(iii')$ we may further assume that
\begin{quote}
\begin{enumerate}
\item[(3)]\ for each $ \lambda$ in $(w_0-\delta\gamma_1, w_0+\delta \gamma_1)$, the function
\[
t\mapsto |\phi (\lambda+t\gamma_1)|  
\]
is strictly increasing on $[-d_A,d_A]$.
\end{enumerate}
\end{quote}
Let $a_{1,1}:=1$, $\gamma_{1,1}:=\gamma_1$, and 
\begin{equation} \label{eq:L*}
 R > \frac{d_A}{\delta}.
\end{equation}
By (c) there exist  $b_1,\dots, b_p, a_{1,2},\dots, a_{1,p}\in \mathbb{C}\setminus\{ 0\}$,  pairwise distinct $\lambda_1,\dots, \lambda_p\in (w_0-\delta\gamma_1, w_0+\delta\gamma_1)$, and 
\begin{equation} \label{eq:L**}
\gamma_{1,2},\dots,\gamma_{1,p}\in (0,\frac{\gamma_1}{R})
\end{equation} so that
\begin{equation} 
\begin{aligned}\label{eq:0.5*}
B&:=\sum_{j=1}^p b_j\ E(\lambda_j+(m-1)\gamma_1)\in V \\
L_1&:=\sum_{j=1}^p a_{1,j}\ E(\gamma_{1,j})\in U_1.
\end{aligned}
\end{equation}
Now, let $\eta:=\mbox{min}_{1\le j\le p}|\gamma_{1,j}|$.  Again by $(c)$, there exist an integer $q\ge p$ and  non-zero scalars $a_{i,j}$, $ \gamma_{i,j}$ 
 $((i,j)\in \{ 2,\dots, N\}\times \{ 1,\dots, q\} )$ with
\begin{equation} \label{eq:L***}
 \gamma_{i,j}\in [-z_0, 0)\cap D(0, \frac{\eta}{R}) 
\end{equation}
so that
\begin{equation} \label{eq:1*}
L_i:=\sum_{j=1}^p a_{i,j}\ E(\gamma_{i,j})\in U_i \ \ (2\le i\le N).
\end{equation}
Enlarging $p$ if necessary (and choosing some new $\gamma_{1,j}$'s in $(\eta \frac{\gamma_1}{|\gamma_1|}, \frac{\gamma_1}{R})$ and corresponding small $a_{1,j}$'s in $\mathbb{C}\setminus\{ 0\}$ so that $\eqref{eq:0.5*}$ is preserved), we may assume that $q=p$. 
Next, for each positive integer $n$ let
\[
R_n:= \sum_{j=1}^p c_j\ E(\lambda_j),
\]
where for each $j\in\{1,\dots, p\}$ the scalar $c_j=c_j(n)$ is a solution of
\begin{equation} \label{eq:4.4*}
b_j= m\, c_j \  \frac{n^{\sum_{i=2}^N k_i\beta_i} }{ \phi (\lambda_j+(m-1)\gamma_1 )^n}.
\end{equation}
Notice that $c_j=c_j(n)\to 0$ as $n\to \infty$, as $|\phi (\lambda_j+(m-1)\gamma_1)|>1$ by $(1)$. So letting
\[
\begin{cases}
f_1=f_{1,n}:= L_1+R_n \\
f_i=f_{i,n}:= L_i+\frac{1}{n^{k_i}} \ \ \ (2\le i\le N),
\end{cases}
\]
we have that $f=(f_1,\dots f_N)\in U_1\times\dots\times U_N$ whenever $n$ is large enough.
Again, for each $\alpha$ in $A$ let $\mathcal{I}_\alpha$ denote the set of elements $(u, v, \ell)$
in $(\N_0^p)^N\times \N_0^p\times \N_0^{N-1}$ for which $|u_1|+|v|=\alpha_1$ and $|u_i|+\ell_i=\alpha_i$ $(2\le i\le N)$.
Then for each $\alpha$ in $A$ and each positive integer $n$ we have
\begin{equation}\label{eq:2*}
\begin{aligned}
T^n(f^\alpha)&= T^n( \prod_{i=1}^N f_i^{\alpha_i}) \\
&=\sum_{(u,v,\ell)\in \mathcal{I}_\alpha} X_\alpha (u, v, \ell, n)\, E(\lambda \cdot u + \gamma \cdot v),
\end{aligned}
\end{equation}
where 
\begin{equation} \label{eq:3.12*}
\begin{aligned}
X_\alpha(u,v,\ell,n) &= {\alpha_{1}\choose{ u_1 v}  } \prod_{i=2}^N {\alpha_i\choose{ u_i\, \ell_i }}   \ a^u c^v  \ 
 \frac{ (\phi (\gamma\cdot u + \lambda \cdot v))^n}{ n^{\sum_{s=2}^N k_s \ell_s}}, \\
\lambda\cdot u+\gamma\cdot v&= \sum_{i=1}^N\sum_{j=1}^p \lambda_{i,j} u_{i,j}  + \sum_{j=1}^p \gamma_j v_j,\\
a^u&=\prod_{ 1\le i \le N, \  1\le j \le p }
 a_{i,j}^{u_{i,j}}, \mbox{ and }\\ 
c^v&= \prod_{j=1}^p c_j^{v_j}.
\end{aligned}
\end{equation}

Notice that for each $k\in \{ 1,\dots, p\}$ the element $(u,v,\ell)\in \mathcal{I}_\beta$ given by $v=e_k$,
$u_1=(m-1)e_1$, $|u_2|=\dots =|u_N|=0$, and $\ell=(\beta_i)_{i=2}^N$ satisfies
\begin{equation} \label{eq:3*}
\begin{cases}
X_\beta(u,v,\ell)=b_k \ \mbox{ and } \\
E(\gamma\cdot u +\lambda\cdot v)=E(\lambda_k+(m-1)\gamma_1).
\end{cases}
\end{equation}
Hence by $\eqref{eq:1*}$ and $\eqref{eq:2*}$  it suffices to show that 
\begin{equation}  \label{eq:19*}
X_\alpha (u, v, \ell, n)\underset{n\to\infty}{\to} 0 \ \ \ \ \ 
\end{equation}
except for the $p$ elements $(u, v,\ell)$ of $\cup_{\alpha\in A} \mathcal{I}_\alpha$ considered in $\eqref{eq:3*}$.
To this end, let $\alpha\in A$ and let $(u,v,\ell)\in\mathcal{I}_\alpha$ be fixed, not of the form considered in $\eqref{eq:3*}$. Notice that 
\begin{equation} \label{eq:4.8*}
\left| X_\alpha(u,v,\ell, n)\right|  \le  K \ n^{\sum_{s=2}^N (\frac{|v|}{m} k_s\beta_s - k_s\ell_s)} \
\Theta_{u,v}^n,
\end{equation}
where
\[
\Theta_{u,v}=\frac{ |\phi (\gamma\cdot u+ \lambda \cdot v)| }{ \prod_{j=1}^p   | \phi(\lambda_j+(m-1)\gamma_1)|^{\frac{v_j}{m}} }
\]
and where $K$ is a constant that does not depend on $n$.
In particular, $\eqref{eq:19*}$ holds whenever $\Theta_{u,v}<1$.
We consider the following cases:

\vspace{.1in}

Case 1:  $|v|=0$.   So $|u_1|=\alpha_1-|v|=\alpha_1$ in this case.
If $|u|=0$, then  $\alpha_1=0$ and  $\ell=(\alpha_i)_{i=2}^N$, and since $A$ does not contain the zero N-tuple we must have $\ell_i=\alpha_i>0$ for some $i\in \{ 2,\dots, N\}$. Since $|\Phi(u\cdot \gamma)|=|\phi(0)|\le 1$,
by $\eqref{eq:4.8*}$  and $\eqref{eq:a1*}$ we have
\[
|X_\alpha(u,v,\ell,n)| = K \ n^{-\sum_{s=2}^N\ell_s k_s} \ |\phi (u\cdot \gamma)|^n \underset{n\to\infty}{\to} 0.
\]
So we assume $|u|\ne 0$.  If $|u_1|\ne 0$, then by $\eqref{eq:L*}$, $\eqref{eq:L**}$ and $(i)$
\[
\begin{aligned}
u\cdot \gamma &=u_{1,1}\gamma_1+\sum_{j=2}^p u_{1,j}\gamma_{1,j} + \sum_{i=2}^N\sum_{j=1}^p u_{i,j}\gamma_{i,j}\\
&\in (0, \frac{|u|-u_{1,1}}{R}+u_{1,1})\gamma_1 \subset (0, z_0]\subset \phi^{-1}(\mathbb{D}),
\end{aligned}
\]
so $\Theta_{u,v}=|\phi( u\cdot \gamma)|<1$. On the other hand, if $|u_1|=0$, we have $|u_2|+\dots+|u_N|=|u|\ne 0$ and thus by $\eqref{eq:L*}$, $\eqref{eq:L***}$ and $(i)$ we have $u\cdot \gamma\in [-z_0, 0) \subset\phi^{-1}(\mathbb{D})$ and hence
$\Theta_{u,v}=|\phi( u\cdot \gamma)|<1$.
\vspace{.1in}

Case 2: $|v|\in \{2,\dots, m\}$.
Here we have $u_{1,1}\le |u_1|=\alpha_1-|v|\le m-2$. Also, $z:=u\cdot\gamma - u_{1,1}\gamma_1$ has size
\[
|z|=|\sum_{j=2}^pu_{1,j}\gamma_{1,j}+\sum_{i=2}^N\sum_{j=1}^p u_{i,j}\gamma_{i,j}| \le \frac{|\alpha|-|v|}{R}|\gamma_1|\le \frac{d_A-2}{R}|\gamma_1|<\delta.
\]
by $\eqref{eq:L*}$.
So by $(2)$ we have
\[
\begin{aligned}
|\phi (u\cdot \gamma+\lambda\cdot v)|&=|\phi( u_{1,1}\gamma_1+z+v\cdot\lambda)|\\
& < |\phi (\lambda_j+(m-1)\gamma_1)|^p \ (1\le j\le p),
\end{aligned}
\]

and thus

\[
\Theta_{u,v}=\frac{|\phi(u\cdot\gamma+\lambda\cdot v)|}{\prod_{j=1}^p |\phi (\lambda_j+(m-1)\gamma_1)|^{v_j} }= \prod_{j=1}^p  \frac{ |\phi(u\cdot\gamma+\lambda\cdot v)|^\frac{v_j}{d} }{ |\phi (\lambda_j+(m-1)\gamma_1)|^{v_j}} <1 . 
\]
\vspace{.05in}

Case 3:  $|v|=1$. So $v=e_k$ for some $k\in \{ 1,\dots, p\}$, and $0\le u_{1,1} \le |u_1|=\alpha_1-1$.

If $|u_1|=0$, then by $\eqref{eq:L***}$ we have $u\cdot\gamma=-t\gamma_1$ for some $t\in [0, m-1)$. Hence
 by (3)
\[
|\phi(u\cdot\gamma+v\cdot\lambda)|<|\phi(\lambda_k+(m-1)\gamma_1)|,
\]
giving $\Theta_{u,v}<1$.  So assume that $|u_1|>0$. Now, if  $u_{1,1}< m-1$, then by $\eqref{eq:L*}$ and $\eqref{eq:L**}$
\[u\cdot\gamma+v\cdot\lambda =\lambda_k+t\gamma_1\]
for some $t\in (0, m-1)$, giving again by (3)
\[
|\phi(u\cdot\gamma+v\cdot\lambda)|<|\phi(\lambda_k+(m-1)\gamma_1)|,
\]
and thus $\Theta_{u,v}<1$. Finally, if $u_{1,1}=m-1$, then we must have $\alpha_1=m$ and $u_1=(m-1)e_1$. Here we have two possibilities: either $|u_i|>0$ for some $2\le i\le N$ or else $|u_2|=\dots=|u_N|=0$.  In the former case we have by $\eqref{eq:L*}$ and $\eqref{eq:L**}$
\[
u\cdot\gamma+v\cdot\lambda =\lambda_k+t\gamma_1\]
 for some $t\in (0, m-1)$, giving $\Theta_{u,v}<1$.
In the latter case we have $\ell=(\alpha_i)_{i=2}^N$ and $u\cdot\gamma+v\cdot\lambda =\lambda_k+(m-1)\gamma_1$, giving $\Theta_{u,v}=1$. Since $(u,v,\ell)$ is not of the form considered in   $\eqref{eq:3*}$, then $\alpha\ne \beta$. Hence by $\eqref{eq:4.8*}$  and $\eqref{eq:a1*}$ we have
\[
|X_\alpha(u,v,\ell, n)| \le K \ n^{\sum_{s=2}^N (k_s\beta_s-k_s\alpha_s)}\underset{n\to\infty}{\to} 0,
\]
and $\eqref{eq:19*}$ holds.
\end{proof}


\end{document}